\documentclass[10pt]{article}
\usepackage{amsmath,amsthm,amssymb,latexsym,enumerate}
\usepackage{color}
\pagestyle{myheadings}

\newtheorem{thm}{Theorem}[section]
\newtheorem{cor}[thm]{Corollary}
\newtheorem{lem}[thm]{Lemma}
\newtheorem{prop}[thm]{Proposition}


\theoremstyle{definition}

\allowdisplaybreaks

\numberwithin{equation}{section}

\textwidth=14cm
\textheight=23cm
\parindent=16pt
\oddsidemargin=1.2cm
\evensidemargin=1.2cm
\topmargin=-0.5cm

\begin{document}

\title{\bf On an effect of inhomogeneous constraints\\for a maximizing problem of the Sobolev embedding associated with the space\\of bounded variation}
\author{Michinori Ishiwata\,$^1$ and Hidemitsu Wadade\,$^2$}
\date{\small\it $^1$Department of Systems Innovation
Graduate School of Engineering Science, Osaka University,\\
1-3 Machikaneyama, Toyonaka, Osaka 560-8531, Japan\vspace{.3cm}\\
$^2$Faculty of Mechanical Engineering, Institute of Science and Engineering, Kanazawa University,\\
Kakuma, Kanazawa, Ishikawa 920-1192, Japan}

\maketitle

\begin{abstract}
In this paper, we consider a maximizing problem associated with the Sobolev type embedding 
$BV(\Bbb R^N)\hookrightarrow L^r(\Bbb R^N)$ for $1\leq r\leq1^*:=\frac{N}{N-1}$ 
with $N\geq 2$ as follows : for given $\alpha>0$, 
$$
D_\alpha(a,b,q):=\sup_{u\in BV(\Bbb R^N),\atop \|u\|_{TV}^a+\|u\|_{1}^b=1}
\left(\|u\|_{1}+\alpha\|u\|_{q}^q\right), 
$$
where $1<q\leq 1^*$, $a, b>0$, we show that, although the maximizing problem 
associated with $D_\alpha(a,b,1^*)$ suffers from both of the non-compactness of $BV\hookrightarrow L^1$ and $BV\hookrightarrow L^{1^*}$ called vanishing and concentrating phenomena, there exists a maximizer for some range of $a$, $b$. 
Furthermore, we show that 
any maximizer $u\in BV$ of $D_\alpha(a,b,q)$ must be given by a characteristic function on a ball.  

\medskip

\noindent
2010 Mathematics Subject Classification. 
47J30\,;\,46E35\,;\,26D10.

\medskip

\noindent 
{\it Key words\,:\,Sobolev's embedding\,;\,maximizing problem\,;\,space of bounded variation\,;\\Gagliardo-Nirenberg inequality\,;\,isoperimetrical inequality} 
\end{abstract}

\section{Introduction and main results}
\noindent

In this paper, we consider a maximizing problem associated with 
Sobolev type embedding $BV(\Bbb R^N)\hookrightarrow L^r(\Bbb R^N)$ for $1\leq r\leq 1^*:=\frac{N}{N-1}$ with $N\geq 2$, where $BV$ denotes the space of bounded variation, 
see \cite{G} and Section 2. The inequality associated with the embedding $BV\hookrightarrow L^{1^*}$ is Mazya's inequality with its best-constant $E$ given by 
\begin{align}\label{mazya}
E:=\sup_{u\in BV\setminus\{0\}}\left(\frac{\|u\|_{1^*}}{\|u\|_{TV}}\right)^{1^*}
=\left(\frac{1}{N^{N-1}\omega_{N-1}}\right)^{\frac{1}{N-1}}, 
\end{align}
where $\omega_{N-1}$ denotes the surface area of the $N$-dimensional unit ball, see \cite{M}. 
It is well-known that \eqref{mazya} is 
equivalent to the isoperimetric inequality, 
and maximizers of $E$ consist of functions of the form $\lambda\chi_{B}\in BV$ with $\lambda\in\Bbb R\setminus\{0\}$ and a ball $B\subset\Bbb R^N$. A variational problem investigated in this paper is formulated as follows : for given $\alpha>0$, 
\begin{align}\label{basic-pro}
D_\alpha:=\sup_{u\in BV, \,\,\|u\|_{TV}^a+\|u\|_{1}^b=1}\left(
\|u\|_{1}+\alpha \|u\|_{q}^q
\right), 
\end{align}
where $1<q\leq 1^*$ and $a, b>0$. Especially for the critical case $q=1^*$, the maximizing problem associated with $D_\alpha$ suffers from both of the non-compactness of $BV\hookrightarrow L^1$ and $BV\hookrightarrow L^{1^*}$ called vanishing and concentrating phenomena, respectively. 
One of our goals is to clarify an effect of the exponents $a$ and $b$ in 
the inhomogeneous constraints on the (non-)attainability of $D_\alpha$. 

\medskip

The attainability of maximizing problems corresponding to 
the Sobolev embedding $W^{1,p}\hookrightarrow L^r$, where $1<p<N$, 
$p\leq r\leq p^*:=\frac{Np}{N-p}$, were studied in \cite{IW2, N2}. 
The authors in \cite{IW2} treated the variational problem given by 
\begin{align*}
\sup_{u\in W^{1,p}, \,\,\|\nabla u\|_p^a+\|u\|_p^b=1}\left(
\|u\|_p^p+\alpha \|u\|_{q}^q
\right), 
\end{align*}
where $p<q<p^*$ and $a,b>0$. This problem contains a difficulty coming from the 
non-compactness of $W^{1,p}\hookrightarrow L^p$ 
due to a vanishing phenomenon. 
After that, the author in \cite{N2} considered the same problem for the critical case $q=p^*$. 
In this case, the problem becomes more complicated since one needs to exclude 
both of vanishing and concentrating behaviors of a maximizing sequence 
due to the non-compact embeddings $W^{1,p}\hookrightarrow L^p$ and $W^{1,p}\hookrightarrow L^{p^*}$, respectively. 
The usual way in attacking this problem will be to compute the thresholds with respect to  vanishing and concentrating phenomena and to investigate behaviors of a maximizing sequence in order to recover the compactness of the functional, 
which was a strategy used in \cite{IW2}. However, the author in \cite{N2} gave 
an alternative way in discussing the problem without a use 
of the variational method directly. A main key used in \cite{N2} is to give 
another expression of the functional in terms of the corresponding $1$-dimensional function 
by a scaling argument.  Based on these known results, 
we consider the remaining case $p=1$, which leads to the problem \eqref{basic-pro}. 
In fact, we observe that the method used in \cite{N2} can work for the marginal case $p=1$ 
by replacing $W^{1,1}$ with $BV$. Also, as an advantage of the case $p=1$, 
we know the exact forms of maximizers of $E$ through the isoperimetric inequality, 
and as a result, we obtain a characterization of maximizers of $D_\alpha$, 
see Theorem \ref{thm5}. 

\medskip

In order to state our main results, we start from the problem \eqref{basic-pro} with the subcritical case $1<q<1^*$. 
In this case, the embedding $BV_{rad}\hookrightarrow L^q$ is compact, 
where $BV_{rad}$ denotes the set of radially symmetric functions in $BV$, 
and hence, the term $\|u\|_{q}$ in the functional will make an aid 
to admit a maximizer of $D_\alpha$, see \cite{FP}. 
On the other hand, $D_\alpha$ suffers from the non-compactness 
of $BV\hookrightarrow L^1$, which comes from 
the scaling $u_n(x):=\frac{1}{n^N}u(\frac{x}{n})$ with a fixed 
$u\in BV\setminus\{0\}$. In general, we call $\{u_n\}_n\subset BV$ ``\,a vanishing sequence\,'' if $\{u_n\}_n$ satisfies the conditions : 
$$
\sup_n\|u_n\|_{BV}<\infty,\quad\inf_n\|u_n\|_{1}>0,\quad\lim_{n\to\infty}\|u_n\|_{TV}=0. 
$$
We also introduce the value $\alpha_v=\alpha_v(a,b,q)\in[0,\infty)$ defined by 
\begin{align*}
\alpha_v:=\displaystyle\inf_{u\in BV, \,\,\|u\|_{TV}^a+\|u\|_{1}^b=1}\frac{1-\|u\|_{1}}{\|u\|_{q}^q}. 
\end{align*}
If there exists a maximizing sequence $\{u_n\}_n$ of $D_\alpha$ 
such that $\{u_n\}_n$ is also a vanishing sequence, 
we easily see $D_\alpha\leq1$. On the other hand, since $\alpha>\alpha_v$ is equivalent to $D_\alpha>1$, the value $\alpha_v$ is expected to be the threshold of $\alpha$ on the attainability of $D_\alpha$. 
Our first result is stated as follows : 

\begin{thm}\label{thm1}
Let $1<q<1^*$, $a>0$ and $b>0$.

\medskip

\noindent 
{\bf (Non-threshold case $\alpha\ne\alpha_v$)}

\medskip

\noindent
{\rm (i)} Let $a>N(q-1)$. Then there holds $\alpha_v=0$, and $D_\alpha$ is attained for $\alpha>0$.

\medskip

\noindent
{\rm(ii)} Let $a\leq N(q-1)$. Then there holds $\alpha_v>0$, and $D_\alpha$ is attained for $\alpha>\alpha_v$, while $D_\alpha$ is not attained for $\alpha<\alpha_v$. 

\medskip

\noindent
{\bf(Threshold case $\alpha=\alpha_v$)}

\medskip

\noindent
{\rm (iii)} Let $a<N(q-1)$, or let $a=N(q-1)$, $\frac{2N-1}{2(N-1)}<q<1^*$ and $b<b_0:=(q-1)(N-1)-\left(N-(N-1)q\right)$. 
Then $D_{\alpha_v}$ is attained.

\medskip

\noindent
{\rm (iv)} Let $a=N(q-1)$ and $\begin{cases}
&\hspace{-.2cm}\frac{2N-1}{2(N-1)}<q<1^*\text{ \,and \,}b\geq b_0,\\
&\hspace{-.2cm}\text{or \,}1<q\leq\frac{2N-1}{2(N-1)}. 
\end{cases}$ Then $D_{\alpha_v}$ is not attained.
\end{thm}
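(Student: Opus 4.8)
The plan is to remove the infinite-dimensional optimization by the scaling method of \cite{N2}, reducing $D_\alpha$ to a one-variable problem. Under the two-parameter scaling $u\mapsto\mu\,u(\cdot/\lambda)$ the three quantities transform as $\|u\|_{TV}\mapsto\mu\lambda^{N-1}\|u\|_{TV}$, $\|u\|_1\mapsto\mu\lambda^N\|u\|_1$ and $\|u\|_q^q\mapsto\mu^q\lambda^N\|u\|_q^q$, so that the scale-invariant Gagliardo--Nirenberg quotient
\[
K(u):=\frac{\|u\|_q^q}{\|u\|_{TV}^{N(q-1)}\,\|u\|_1^{\,N-(N-1)q}}
\]
is constant along each orbit. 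First I would record the sharp bound $K(u)\le G$, obtained by composing the interpolation inequality $\|u\|_q\le\|u\|_1^{1-s}\|u\|_{1^*}^s$ with $s=N(q-1)/q\in(0,1)$ and Mazya's inequality \eqref{mazya}; since both are saturated exactly by characteristic functions of balls, $G=E^{(N-1)(q-1)}$ is attained and the extremals of $K$ are precisely the functions $\lambda\chi_B$.

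Writing $T=\|u\|_{TV}$ and $M=\|u\|_1$, the constraint reads $T^a+M^b=1$ and the functional equals $M+\alpha\,K(u)\,T^{N(q-1)}M^{\,N-(N-1)q}$. Because a ball can be scaled to realize any prescribed pair $(T,M)\in(0,\infty)^2$ while keeping $K=G$, I obtain the exact identity
\[
D_\alpha=\max_{T\in[0,1]}g(T),\qquad g(T):=M(T)+\alpha G\,T^{N(q-1)}M(T)^{\,N-(N-1)q},\quad M(T):=(1-T^a)^{1/b}.
\]
Moreover a genuine maximizer in $BV$ exists if and only if $g$ attains its maximum at an interior point $T^*\in(0,1)$ (realized by a scaled ball), whereas the vanishing endpoint $T=0$, where $g(0)=1$, is the only non-attainable point. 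Correspondingly $\alpha_v=\inf_{T\in(0,1)}h(T)$ with $h(T):=(1-M(T))\big/\bigl(G\,T^{N(q-1)}M(T)^{\,N-(N-1)q}\bigr)$, and since $g(T)\gtrless1\Leftrightarrow\alpha\gtrless h(T)$, the attainability question becomes whether $h$ realizes its infimum inside $(0,1)$.

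The bulk of the argument is then elementary one-variable analysis. Using $1-M(T)\sim\tfrac1b T^a$ as $T\to0$, I find $h(T)\sim\tfrac1{bG}\,T^{\,a-N(q-1)}$, while $h(T)\to\infty$ as $T\to1$ (here $\gamma:=N-(N-1)q\in(0,1)$). This splits the proof: if $a>N(q-1)$ then $h\to0$, so $\alpha_v=0$ and $g>1$ near $0$, forcing an interior maximum for every $\alpha>0$, which is (i). If $a\le N(q-1)$ then $\alpha_v>0$; for $a<N(q-1)$ one has $h\to\infty$ at both endpoints, so the infimum is interior, giving attainment for $\alpha>\alpha_v$ and at $\alpha=\alpha_v$, and non-attainment for $\alpha<\alpha_v$ (where $g<1$ on all of $(0,1)$). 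This yields (ii) and the first clause of (iii).

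The genuinely delicate case is $a=N(q-1)$, where $h(0^+)=1/(bG)$ is finite. Setting $x=T^a$ and expanding, I obtain $h(T)=\tfrac1{bG}\bigl(1+\tfrac{b-b_0}{2b}\,x+O(x^2)\bigr)$ with $b_0=2(N-1)q-(2N-1)=1-2\gamma$. Hence for $b<b_0$ (possible only when $\gamma<\tfrac12$, i.e. $q>\tfrac{2N-1}{2(N-1)}$) the quotient $h$ dips below its endpoint value, so its infimum is attained interior and $D_{\alpha_v}$ is attained, giving the second clause of (iii). For $b\ge b_0$, and unconditionally when $q\le\tfrac{2N-1}{2(N-1)}$ (so $b_0\le0<b$), I expect the opposite: $\inf h=1/(bG)$ is approached only as $T\to0$ and never attained, yielding (iv). The main obstacle is precisely here, because the local expansion controls $h$ only near $T=0$: I must prove the \emph{global} sharp inequality $h(T)>1/(bG)$ for all $T\in(0,1)$ whenever $b\ge b_0$. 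Substituting $s=M(T)\in(0,1)$ this is equivalent to $s^\gamma(1-s^b)\le b(1-s)$ on $(0,1)$, i.e. that $F(s):=s^\gamma\frac{1-s^b}{1-s}$ stays below its boundary value $F(1^-)=b$; establishing this monotonicity-type estimate for all real $b\ge b_0$, including the critical $b=b_0$ where the leading term degenerates and one must inspect higher order, is the technical heart of the theorem.
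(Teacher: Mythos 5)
Your reduction to a one-variable problem is exactly the mechanism the paper uses (Lemma \ref{D-f-g}, Lemma \ref{D-imply-f(t)}, Theorem \ref{thm5}), only written in the variable $T=\|u\|_{TV}$ instead of the paper's $t=\|u\|_{TV}^a/\|u\|_1^b=T^a/(1-T^a)$; your $h$ is the paper's $g/E_q$ after this change of variable, your characterization of the extremals of $K$ agrees with Proposition \ref{GN-attain}, and your expansion $h=\tfrac{1}{bG}\bigl(1+\tfrac{b-b_0}{2b}x+O(x^2)\bigr)$ is correct (I checked the coefficient: it equals $\tfrac{b-1}{2b}+\tfrac{\gamma}{b}=\tfrac{b-b_0}{2b}$ with $b_0=1-2\gamma$). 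On this basis parts (i), (ii) and (iii) are correctly handled by your endpoint asymptotics together with the equivalence $g(T)\gtrless 1\Leftrightarrow\alpha\gtrless h(T)$.

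Part (iv), however, is a genuine gap, and you flag it yourself: the entire case rests on the \emph{global strict} inequality $s^\gamma(1-s^b)<b(1-s)$ for all $s\in(0,1)$ whenever $b\ge b_0$, which you only say you ``expect.'' Two points make this more than a routine omission. First, strictness is essential: if equality held at even one interior point, then $\inf h$ would be attained there, $f_{\alpha_v}$ (in your notation $g_{\alpha_v}$) would attain the value $1$ at an interior point, and $D_{\alpha_v}$ \emph{would} be attained --- the opposite of (iv). Second, your remark that at $b=b_0$ ``one must inspect higher order'' indicates you are thinking of a local expansion near $s=1$ (i.e.\ $T=0$), but no expansion of any order can yield an inequality on all of $(0,1)$. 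The paper closes precisely this step in Lemma \ref{attain-n(q-1)} (i), (iii) by a global convexity argument: with $\phi(t):=(1+t)^{1+\frac{N-q(N-1)}{b}}-(1+t)^{1-\frac{(q-1)(N-1)}{b}}-\frac{t}{b}$ (which is your inequality transplanted to the $t$-variable), one has $\phi(0)=\phi'(0)=0$ and $\phi''(t)=(1+t)^{-1-\frac{(q-1)(N-1)}{b}}\varphi(t)$ with $\varphi$ strictly increasing and $\varphi(0)=\frac{b-b_0}{b^2}\ge 0$ exactly when $b\ge b_0$; hence $\phi''>0$, then $\phi'>0$, then $\phi>0$ on $(0,\infty)$. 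This treats $b=b_0$ on the same footing as $b>b_0$ (and covers $b_0\le 0$, i.e.\ $1<q\le\frac{2N-1}{2(N-1)}$, for free), with no higher-order analysis. In your variable the analogous argument works for $\psi(s):=b(1-s)-s^\gamma+s^{\gamma+b}$, which satisfies $\psi(1)=\psi'(1)=0$ and $\psi''(s)=s^{\gamma-2}\bigl(\gamma(1-\gamma)+(\gamma+b)(\gamma+b-1)s^b\bigr)>0$ on $(0,1)$ when $b\ge b_0$, since the bracket is monotone in $s^b$ with value $b(b-b_0)\ge0$ at $s=1$. Until some such argument is supplied, your proposal proves (i)--(iii) but not (iv).
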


Next, we estimate the value $\alpha_v$. To this end, we introduce 
the best-constant of the Gagliardo-Nirenberg type inequality $E_q$ : 
$$
E_q:=\sup_{u\in BV\setminus\{0\}}\frac{
\|u\|_{q}^q
}{
\|u\|_{1}^{q-(q-1)N}\|u\|_{TV}^{(q-1)N}. 
}
$$
One can calculate $E_q=\left(\frac{1}{N^{N-1}\omega_{N-1}}\right)^{q-1}$ 
and remark that $E_{1^*}=E$ is Mazya's best-constant, see Proposition \ref{GN-attain} (i).  
By means of $E_q$, the value $\alpha_v$ is estimated as follows : 

\begin{thm}\label{thm2}

\noindent
Let $1<q<1^*$, $a>0$ and $b>0$. 

\medskip

\noindent
{\rm(i)} There hold $
\alpha_v
\begin{cases}
=0\quad\text{when \,}a>N(q-1),\\
>0\quad\text{when \,}a\leq N(q-1). 
\end{cases}
$

\medskip

\noindent
{\rm(ii)} Let $a=N(q-1)$. Then there hold 
\begin{align*}
\begin{cases}
&\hspace{-.3cm}
\alpha_v=\frac{1}{b E_q}\text{ \,when }
\begin{cases}
&\hspace{-.2cm}\frac{2N-1}{2(N-1)}<q<1^*\text{ \,and \,}b\geq b_0,\\
&\hspace{-.2cm}\text{or \,}1<q\leq\frac{2N-1}{2(N-1)},
\end{cases}\vspace{.2cm}\\
&\hspace{-.3cm}0<\alpha_v<\frac{1}{b E_q}
\text{ \,when \,}
\frac{2N-1}{2(N-1)}<q<1^*\text{ \,and \,}b<b_0. 
\end{cases}
\end{align*}

\noindent
{\rm(iii)} {\bf(Asymptotic behaviors of $\alpha_v$ on the parameters $a$ and $b$)}

\medskip

\noindent
{\rm (a)} There holds $\lim_{a\downarrow 0}\alpha_v=\infty$. 

\medskip

\noindent
{\rm (b)} Let $\frac{2N-1}{2(N-1)}<q<1^*$ and $b\geq b_0$, or let $1<q\leq\frac{2N-1}{2(N-1)}$. 
Then there holds $\lim_{a\uparrow N(q-1)}\alpha_v=\frac{1}{b E_q}$. 

\medskip

\noindent
{\rm(c)} Let $a\leq N(q-1)$. Then there hold $\lim_{b\downarrow 0}\alpha_v=\infty$ 
and $\lim_{b\to\infty}\alpha_v=0$. 

\medskip

\noindent
{\rm(d)} Let $a=N(q-1)$ and $\frac{2N-1}{2(N-1)}<q<1^*$. Then there holds
$\lim_{b\uparrow b_0}\alpha_v=\frac{1}{b_0 E_q}$. 
\end{thm}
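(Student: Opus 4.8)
The plan is to reduce the definition of $\alpha_v$ to a one-dimensional minimization by the scaling argument of \cite{N2}. For $u\in BV$ set $S:=\|u\|_{TV}$, $T:=\|u\|_{1}$ and $R^q:=\|u\|_{q}^q$; the constraint $S^a+T^b=1$ forces $T\in(0,1)$. The Gagliardo-Nirenberg inequality gives $R^q\le E_q\,T^{m}S^{\nu}$, where $m:=N-(N-1)q>0$, $\nu:=(q-1)N>0$ and $m+\nu=q$, with equality exactly for $u=\mu\chi_{B}$. Since $1-T\ge0$ and a ball realizes every pair $(S,T)$ with $S,T>0$, for fixed $(S,T)$ the quantity $\frac{1-T}{R^q}$ is minimized by taking $R^q$ maximal, i.e.\ by balls; eliminating $S=(1-T^b)^{1/a}$ yields the exact formula
\begin{align*}
\alpha_v=\frac{1}{E_q}\inf_{T\in(0,1)}g(T),\qquad g(T):=\frac{1-T}{T^{m}(1-T^b)^{\nu/a}}.
\end{align*}
Every assertion of the theorem will be extracted from this single function.

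For (i) I examine the endpoints. As $T\downarrow0$, $g(T)\to\infty$ because $m>0$. As $T\uparrow1$, the relation $1-T^b\sim b(1-T)$ gives $g(T)\sim b^{-\nu/a}(1-T)^{1-\nu/a}$, so $g(T)\to0$, $\to1/b$, or $\to\infty$ according as $a>\nu$, $a=\nu$, or $a<\nu$. Hence $\inf g=0$ (i.e.\ $\alpha_v=0$) exactly when $a>N(q-1)$, whereas for $a\le N(q-1)$ the continuous, strictly positive $g$ is bounded below by a positive constant, giving $\alpha_v>0$. This is statement (i).

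The core is (ii), where $a=N(q-1)=\nu$, so $\nu/a=1$ and $\lim_{T\uparrow1}g(T)=1/b$. Whether $\inf g$ equals $1/b$ or is strictly smaller is governed by the sign on $(0,1)$ of
\begin{align*}
\phi(T):=b(1-T)-T^{m}(1-T^b)=b(1-T)-T^{m}+T^{m+b},
\end{align*}
since $g(T)\ge1/b\iff\phi(T)\ge0$. One computes $\phi(0)=b>0$, $\phi(1)=\phi'(1)=0$, and, decisively,
\begin{align*}
\phi''(1)=b(2m+b-1)=b(b-b_0),\qquad b_0=1-2m,
\end{align*}
so that the threshold $b_0$ and the value $q=\frac{2N-1}{2(N-1)}$ (equivalently $m=\frac12$, i.e.\ $b_0=0$) arise intrinsically. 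To pass from this local data to a global sign I use
\begin{align*}
\phi''(T)=T^{m-2}\bigl[-m(m-1)+(m+b)(m+b-1)T^b\bigr];
\end{align*}
the bracket is monotone in $T$, so its sign on $[0,1]$ is dictated by its endpoint values $-m(m-1)$ and $\phi''(1)=b(b-b_0)$. When $b\ge b_0$, a short split into $m\le1$ and $m>1$ shows that either $\phi$ is convex with minimum at $T=1$, or $\phi'<0$ on all of $(0,1)$; in either case $\phi>0$ on $(0,1)$, so $\inf g=1/b$ and $\alpha_v=\frac{1}{bE_q}$. When $b<b_0$ (possible only if $b_0>0$, i.e.\ $q>\frac{2N-1}{2(N-1)}$) one has $\phi''(1)<0$, hence $\phi<0$ just below $T=1$ and $\inf g<1/b$; together with $\inf g>0$ this gives $0<\alpha_v<\frac{1}{bE_q}$. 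Handling the borderline $b=b_0$ on the equality side is the step I expect to demand the most care.

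Finally, (iii) rests on the same representation plus monotonicity: since $T^b,\,1-T^b\in(0,1)$, for fixed $T$ both $a\mapsto g(T)$ and $b\mapsto g(T)$ are nonincreasing, so $\alpha_v$ is nonincreasing in $a$ and in $b$. The one-sided limits (b) and (d) follow by combining this monotone lower bound with the pointwise convergence of $g(T)$, at fixed $T$, to its value at the endpoint parameter ($a=\nu$, resp.\ $b=b_0$), which forces $\limsup\alpha_v\le\frac{1}{E_q}\inf_T(\text{limiting }g)$; the two sides meet at $\frac{1}{bE_q}$ and $\frac{1}{b_0E_q}$ by (ii). For (c), testing $g$ at $T_b=2^{-1/b}\uparrow1$ (where $1-T_b\sim\frac{\log2}{b}$) gives $\lim_{b\to\infty}\alpha_v=0$, while $1-T^b\sim b\log\frac1T$ as $b\downarrow0$ extracts a factor $b^{-\nu/a}\to\infty$ with a positive residual infimum precisely when $a\le N(q-1)$, giving $\lim_{b\downarrow0}\alpha_v=\infty$. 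The subtlest is (a): I show any near-minimizer $T_a$ stays in a compact subinterval of $(0,1)$ (it can approach neither $0$, where $\frac{1-T}{T^m}$ blows up, nor $1$, where $(1-T^b)^{-\nu/a}$ overwhelms the decay of $1-T$), so that $g(T_a)\ge c_1e^{(\nu/a)c_2}\to\infty$, whence $\lim_{a\downarrow0}\alpha_v=\infty$.
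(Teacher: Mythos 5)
Your proposal is correct, and its backbone is the same as the paper's: your identity $\alpha_v=\frac{1}{E_q}\inf_{T\in(0,1)}\frac{1-T}{T^{m}(1-T^{b})^{\nu/a}}$ is exactly Lemma \ref{D-f-g} after the substitution $T=(1+t)^{-1/b}$ (equivalently $t=\|u\|_{TV}^a/\|u\|_{1}^b$), derived from the same two ingredients (the Gagliardo--Nirenberg bound plus the fact that scaled characteristic functions of balls saturate it and realize every admissible pair of norms); your endpoint analysis in (i) is Lemma \ref{sub-pro1}, and your $\phi(T)=b(1-T)-T^m(1-T^b)$ is literally a positive multiple, $b\,T^{m+b}\phi_{\rm paper}(t)$, of the paper's auxiliary function in Lemma \ref{attain-n(q-1)}, with the same threshold $b_0=1-2m$ emerging from $\phi''(1)=b(b-b_0)$. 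Where you genuinely depart from the paper is in (iii)(b),(d): Proposition \ref{sub-pro2-est} tracks minimizing points $t_a$, $t_3(b)$ of $g$, proves by compactness-and-contradiction arguments that they tend to $0$, and evaluates limits along them, whereas you observe that $g$ is pointwise nonincreasing in each of $a$ and $b$, hence so is $\alpha_v$, and then squeeze the one-sided limits between the monotone lower bound $\alpha_v(a)\geq\alpha_v(N(q-1))$ (resp.\ $\alpha_v(b)\geq\alpha_v(b_0)$) and the upper bound $\limsup\alpha_v\leq\frac{1}{E_q}\inf_T g_{\lim}(T)$ obtained from convergence at each fixed $T$; this is shorter, avoids all minimizer tracking, and reuses (ii) instead of redoing hard analysis. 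Two blemishes, neither fatal. First, the ``split into $m\leq1$ and $m>1$'' in (ii) is spurious: $0<m<1$ for every $1<q<1^*$, and the true dichotomy is the sign of $(m+b)(m+b-1)$; but your stated mechanism (the bracket in $\phi''$ is monotone in $T$, so its sign is controlled by the endpoint values $m(1-m)>0$ and $b(b-b_0)\geq0$) already yields $\phi''\geq0$, hence convexity and $\phi>0$, in both cases whenever $b\geq b_0$, including the borderline $b=b_0$. Second, in (iii)(a),(c) the relation $1-T^b\sim b\log(1/T)$ holds only pointwise, while you need to control an infimum over $T$; replace it by the uniform inequality $1-T^b\leq b\log(1/T)$ (valid for all $T\in(0,1)$, $b>0$), which gives $g_b(T)\geq b^{-\nu/a}\inf_T\frac{1-T}{T^m(\log(1/T))^{\nu/a}}$ with a positive residual infimum exactly when $a\leq N(q-1)$, and combine with the two-region splitting you sketch for (a); with that substitution the blow-up bounds are uniform and the argument is complete.
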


We proceed to the critical case $q=1^*$. 
In this case, $D_\alpha(a,b,1^*)$ suffers from 
the non-compactness of not only $BV\hookrightarrow L^1$ but also $BV\hookrightarrow L^{1^*}$. The latter non-compactness comes from the scaling 
$u_n(x):=n^{N-1}u(nx)$ with a fixed $u\in BV\setminus\{0\}$. 
In general, we call $\{u_n\}_n\subset BV$ ``\,a concentrating sequence\,'' if 
$\{u_n\}_n$ satisfies the conditions : 
$$
\sup_n\|u_n\|_{BV}<\infty,\quad \inf_n\|u_n\|_{1^*}>0,\quad\lim_{n\to\infty}\|u_n\|_{1}=0. 
$$
We also introduce the value $\alpha_c=\alpha_c(a,b)\in(0,\infty]$ defined by
$$
\alpha_c:=\displaystyle\sup_{u\in BV, \,\,\|u\|_{TV}^a+\|u\|_{1}^b=1}\frac{
\|u\|_{1}
}{E-\|u\|_{1^*}^{1^*}}, 
$$
where note $E-\|u\|_{1^*}^{1^*}\geq E\left(1-\|u\|_{TV}^{1^*}\right)>0$ since $0<\|u\|_{TV}<1$. 
If there exists a maximizing sequence $\{u_n\}_n$ of $D_\alpha$ 
such that $\{u_n\}_n$ is also a concentrating sequence, 
it is easy to see $D_\alpha\leq\alpha E$. On the other hand, since $\alpha<\alpha_c$ is equivalent to $D_\alpha>\alpha E$, the value $\alpha_c$ is expected to be the threshold of $\alpha$ on the attainability of $D_\alpha$ regarding to the concentrating phenomenon. 
In fact, we can show that $D_\alpha$ with $\alpha$ in the region $(\alpha_v,\alpha_c)$ admits a maximizer whenever $\alpha_v<\alpha_c$, see Lemma \ref{vani-conce-ge} (iii). 
We now state the attainability result on $D_\alpha=D_\alpha(a,b,1^*)$ :

\begin{thm}\label{thm3}
Let  $a>0$ and $b>0$.

\medskip

\noindent 
{\bf(Non-threshold case $\alpha\ne\alpha_v$ and $\alpha\ne\alpha_c$)}

\medskip

\noindent
{\rm(i)} Let $a>1^*$ and $b>1$. Then there hold $\alpha_v=0$ and $\alpha_c=\infty$, 
and $D_\alpha$ is attained for $\alpha>0$.

\medskip

\noindent
{\rm(ii)} Let $a>1^*$ and $b\leq 1$. Then there hold $\alpha_v=0$ and $\alpha_c<\infty$, 
and $D_\alpha$ is attained for $0<\alpha<\alpha_c$, 
while $D_\alpha$ is not attained for $\alpha>\alpha_c$. 

\medskip

\noindent
{\rm(iii)} Let $a\leq 1^*$ and $b>1$. Then there hold $\alpha_v>0$ and $\alpha_c=\infty$, 
and $D_\alpha$ is attained for $\alpha>\alpha_v$, while $D_\alpha$ is not attained for $\alpha<\alpha_v$. 

\medskip

\noindent
{\rm(iv)} Let $a\leq 1^*$ and $b\leq 1$. Then there holds $0<\alpha_v=\alpha_c<\infty$, 
and $D_\alpha$ is not attained for $\alpha\ne\alpha_v(=\alpha_c)$. 

\medskip

\noindent
{\bf(Threshold case $\alpha=\alpha_v$ or $\alpha=\alpha_c$)}

\medskip

\noindent
{\rm(v)} Let $a>1^*$. Then $D_{\alpha_c}\begin{cases}
&\text{ is attained when $b<1$},\\
&\text{ is not attained when $b=1$}. 
\end{cases}
$

\medskip

\noindent
{\rm(vi)} Let $a=1^*$. Then $D_{\alpha_v}\begin{cases}
&\text{ is attained when $b=1$},\\
&\text{ is not attained when $b\ne1$}. 
\end{cases}$

\medskip

\noindent
{\rm(vii)} Let $a<1^*$. Then $D_{\alpha_v}\begin{cases}
&\text{ is attained when $b>1$},\\
&\text{ is not attained when $b\leq1$}. 
\end{cases}$
\end{thm}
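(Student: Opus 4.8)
The plan is to collapse the problem to a one-variable calculus problem by using that, among all $u\in BV$ with prescribed $\|u\|_{TV}$ and $\|u\|_1$, the quantity $\|u\|_{1^*}$ is maximized exactly by a multiple of a characteristic function of a ball, with the maximal value dictated by equality in Mazya's inequality \eqref{mazya}. Concretely, writing an admissible ball as $u=\lambda\chi_{B_R}$ and setting $t:=\|u\|_1$, one has $\|u\|_{TV}=(1-t^b)^{1/a}$ from the constraint and $\|u\|_{1^*}^{1^*}=E\|u\|_{TV}^{1^*}=E(1-t^b)^{1^*/a}$. I would first prove that this reduction is lossless, i.e.
\[ D_\alpha=\max_{t\in[0,1]}\Phi(t),\qquad \Phi(t):=t+\alpha E\,(1-t^b)^{1^*/a}, \]
and that \emph{every} maximizer must saturate \eqref{mazya}, hence be such a ball (this simultaneously yields Theorem \ref{thm5}). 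The two degenerate endpoints carry the two loss-of-compactness mechanisms: $\Phi(0)=\alpha E$ is the concentrating value ($t=\|u\|_1\to0$) and $\Phi(1)=1$ is the vanishing value ($\|u\|_{TV}\to0$). The decisive dichotomy is then purely finite-dimensional: $D_\alpha$ is attained by a genuine $u\in BV$ if and only if $\arg\max_{[0,1]}\Phi$ meets the open interval $(0,1)$.

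For the non-threshold cases (i)--(iv), I would first read off the thresholds from the boundary asymptotics of $\Phi$ (equivalently of the quotients defining $\alpha_v,\alpha_c$): the exponent count gives $\alpha_v=0\Leftrightarrow a>1^*$ and $\alpha_c=\infty\Leftrightarrow b>1$, and in the doubly critical regime $a\le1^*,\,b\le1$ a direct computation gives $\alpha_v=\alpha_c=\tfrac1E$. On the open interval $(\alpha_v,\alpha_c)$ (nonempty precisely when $\alpha_v<\alpha_c$) attainment is furnished by Lemma \ref{vani-conce-ge}\,(iii); equivalently $\Phi$ has an interior maximum. For $\alpha$ outside $[\alpha_v,\alpha_c]$ I would use the strict inequalities built into the definitions of $\alpha_v,\alpha_c$: if $\alpha<\alpha_v$ then $\|u\|_1+\alpha\|u\|_{1^*}^{1^*}<1$ for every admissible $u$, so $D_\alpha=1$ is not attained; symmetrically $\alpha>\alpha_c$ forces $D_\alpha=\alpha E$, again unattained. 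This settles (i)--(iv).

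The threshold cases (v)--(vii) are the crux. Here $\alpha$ equals $\alpha_c$ (resp.\ $\alpha_v$), so $D_\alpha=\alpha E=\Phi(0)$ (resp.\ $D_\alpha=1=\Phi(1)$), and by the reduction $D_\alpha$ is attained iff the supremum defining $\alpha_c$ is attained at some interior $t$ for $g_c(t):=\frac{t}{E(1-(1-t^b)^{1^*/a})}$ (resp.\ the infimum defining $\alpha_v$ at some interior $t$ for $g_v(t):=\frac{1-t}{E(1-t^b)^{1^*/a}}$). I would carry out the one-variable analysis of $g_v,g_c$: compute the endpoint values and the sign of the derivative at the endpoints, which is exactly what the stated conditions on $a$ and $b$ control. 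For instance when $a=1^*$ (case (vi)) one has $g_v(t)=\frac{1-t}{E(1-t^b)}$, which is $\equiv\frac1E$ when $b=1$ (so every ball is a maximizer and $D_{\alpha_v}$ is attained), is strictly monotone with its extremum pinned at the vanishing endpoint $t=1$ when $b>1$, and pinned at the concentrating endpoint $t=0$ when $b<1$ (so $D_{\alpha_v}$ is not attained). For $a>1^*$ (case (v)) one checks $g_c'(1^-)<0$, so when $b<1$ (where $g_c(0^+)=0$) the supremum detaches into $(0,1)$ and is attained, whereas when $b=1$ (where $g_c(0^+)=\frac{a}{1^*E}>\frac1E=g_c(1)$) the supremum stays pinned at the concentrating endpoint and is not attained; the analogous computation for $a<1^*$ disposes of (vii).

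The main obstacle is precisely this threshold calculus: since $\Phi$ is not concave in general, I must rule out hidden interior critical points of $\Phi$ by establishing global sign-definiteness/monotonicity of $g_v'$ and $g_c'$ on $(0,1)$, so that the location of the extremum is decided entirely by the boundary behavior. Pinning down the exact sharp thresholds ($b=1$ and $a=1^*$), and the borderline identities such as $g_v\equiv\frac1E$, is the delicate technical heart. A secondary point needing care is the rigorous justification of the lossless reduction at the level of \emph{attainment}---that a maximizer must saturate \eqref{mazya}---which is what links the finite-dimensional dichotomy to the original variational problem and yields Theorem \ref{thm5}.
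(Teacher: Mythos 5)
Your proposal is correct and is essentially the paper's own strategy: reduce to the one-parameter family of ball characteristic functions via saturation of Mazya's inequality (the paper's Lemma \ref{D-f-g}, Lemma \ref{D-imply-f(t)} and Theorem \ref{thm5}), characterize attainment of $D_\alpha$ as the one-dimensional supremum being achieved away from the two degenerate ends, and then settle each regime by endpoint asymptotics plus global monotonicity of the threshold quotients (the paper's Lemmas \ref{alpha^*-h}--\ref{last-lem-cri}). The only difference is cosmetic: you parametrize by $s=\|u\|_1\in(0,1)$ while the paper uses $t=\|u\|_{TV}^a/\|u\|_1^b\in(0,\infty)$; under the monotone substitution $t=(1-s^b)/s^b$ your $\Phi$, $g_v$, $g_c$ are exactly the paper's $f_\alpha$, $g$, $h$, and the deferred monotonicity computations you flag as the crux are precisely the ones the paper carries out.
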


Next, we estimate $\alpha_v$ and $\alpha_c$ by means of $E$ as follows : 

\begin{thm}\label{thm4}

\noindent
Let $a>0$ and $b>0$. 

\medskip

\noindent
{\rm(i)} Let $a>1^*$. Then there hold $\alpha_v=0$ 
and $\begin{cases}
&\alpha_c=\infty\text{ \,when \,}b>1,\\
&\frac{1}{E}<\alpha_c<\infty\text{ \,when \,}b\leq 1.
\end{cases}
$

\noindent
In particular, there holds $\alpha_c=\frac{a}{1^*E}$ when $b=1$.

\medskip

\noindent
{\rm(ii)} Let $a\leq 1^*$. Then there hold $
\begin{cases}
&0<\alpha_v<\frac{1}{E}\text{ \,and \,}\alpha_c=\infty\text{ \,when \,}b>1,\\
&\alpha_v=\alpha_c=\frac{1}{E}\text{ \,when \,}b\leq1. 
\end{cases}
$

\noindent
In particular, there holds $\alpha_v=\frac{1}{bE}$ when $a=1^*$ and $b>1$. 

\medskip

\noindent
{\rm(iii)} {\bf(Asymptotic behaviors of $\alpha_v$ and $\alpha_c$ on the parameters $a$ and $b$)}

\medskip

\noindent
{\rm(a)} Let $b>1$. Then there hold $\lim_{a\downarrow 0}\alpha_v=\frac{1}{E}$ and $\lim_{a\uparrow 1^*}\alpha_v=\frac{1}{bE}$. 

\medskip

\noindent
{\rm(b)} Let $b\leq1$. Then there hold $\lim_{a\downarrow 1^*}\alpha_c=\frac{1}{E}$ and $\lim_{a\to\infty}\alpha_c=\infty$. 

\medskip

\noindent
{\rm(c)} Let $a>1^*$. Then there hold $\lim_{b\downarrow 0}\alpha_c=\frac{1}{E}$ 
and $\lim_{b\uparrow 1}\alpha_c=\frac{a}{1^*E}$. 

\medskip

\noindent
{\rm(d)} Let $a\leq 1^*$. Then there hold $\lim_{b\downarrow 1}\alpha_v=\frac{1}{E}$
and $\lim_{b\to\infty}\alpha_v=0$. 
\end{thm}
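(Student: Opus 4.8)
The plan is to reduce both thresholds to one-variable extremal problems by observing that the \emph{only} inequality coupling $\|u\|_{1^*}$ to $\|u\|_{TV}$ is Mazya's inequality \eqref{mazya}, whose extremisers are precisely the characteristic functions of balls. Write $X=\|u\|_{TV}$, $Y=\|u\|_{1}$ and $Z=\|u\|_{1^*}^{1^*}$, so that \eqref{mazya} reads $Z\le EX^{1^*}$, with equality along $u=\lambda\chi_{B}$. Since $\|\lambda\chi_{B_R}\|_{TV}=\lambda\omega_{N-1}R^{N-1}$ and $\|\lambda\chi_{B_R}\|_{1}=\lambda\frac{\omega_{N-1}}{N}R^{N}$, the assignment $(\lambda,R)\mapsto(X,Y)$ is a bijection of $(0,\infty)^2$ onto itself, so balls realise every pair $(X,Y)$ while saturating $Z=EX^{1^*}$. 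Both ratios are monotone in $Z$: on the constraint one has $Y<1$, so $\frac{1-Y}{Z}$ decreases in $Z$ and $\frac{Y}{E-Z}$ increases in $Z$; hence for fixed $(X,Y)$ the extremal $Z$ is the maximal one $Z=EX^{1^*}$, furnished by the ball with that $(X,Y)$. The lower bound in the resulting identities is then Mazya's inequality applied pointwise and the matching bound is attained along balls, giving
\begin{align*}
\alpha_v=\frac1E\inf_{X^a+Y^b=1,\,X,Y>0}\frac{1-Y}{X^{1^*}},\qquad
\alpha_c=\frac1E\sup_{X^a+Y^b=1,\,X,Y>0}\frac{Y}{1-X^{1^*}}.
\end{align*}

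Eliminating the constraint via $t=Y$, $X=(1-t^b)^{1/a}$ for $\alpha_v$ and $s=X$, $Y=(1-s^a)^{1/b}$ for $\alpha_c$ turns Theorem \ref{thm4} into the study of the two explicit scalar functions
\begin{align*}
f(t)=\frac{1-t}{(1-t^b)^{1^*/a}}\quad(t\in(0,1)),\qquad
h(s)=\frac{(1-s^a)^{1/b}}{1-s^{1^*}}\quad(s\in(0,1)),
\end{align*}
namely $\alpha_v=\frac1E\inf f$ and $\alpha_c=\frac1E\sup h$. Their endpoint behaviour already produces the phase diagram: $f(0)=h(0)=1$, while as the right endpoint is approached $f(t)\sim b^{-1^*/a}(1-t)^{1-1^*/a}$ and $h(s)\sim\frac{a^{1/b}}{1^*}(1-s)^{1/b-1}$. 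The signs of the exponents $1-1^*/a$ and $1/b-1$ decide everything: $a>1^*$ forces $f\to0$, hence $\alpha_v=0$; $b>1$ forces $h\to\infty$, hence $\alpha_c=\infty$; and in the critical cases $a=1^*$, $b=1$ the right-endpoint limits are the finite values $1/b$ and $a/1^*$.

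To obtain the exact constants in (i),(ii) I would establish the following one-variable lemmas. (a) For $a=1^*$, $b>1$ the map $\frac{1-t^b}{1-t}$ increases from $1$ to $b$, so $\inf f=1/b$ and $\alpha_v=\frac1{bE}$. (b) For $a>1^*$, $b=1$ the map $\frac{1-s^a}{1-s^{1^*}}$ increases from $1$ to $a/1^*$, so $\sup h=a/1^*$ and $\alpha_c=\frac{a}{1^*E}$. (c) For $a\le1^*$, $b\le1$ the elementary inequalities $(1-t)^{a/1^*}\ge1-t^b$ and $1-s^a\le(1-s^{1^*})^{b}$ give $f\ge1$ and $h\le1$, whence the degenerate identity $\alpha_v=\alpha_c=\frac1E$. (d) For $a<1^*$, $b>1$ (resp.\ $a>1^*$, $b<1$) the logarithmic derivative $f'/f$ is negative near $0$ and $f\to\infty$ at the right endpoint (resp.\ $h'/h$ is positive near $0$, $h$ is bounded and $h\to0$ at the right endpoint), so a single sign change produces an interior minimum of $f$ strictly below $1$ (resp.\ maximum of $h$ strictly above $1$), yielding $0<\alpha_v<\frac1E$ (resp.\ $\frac1E<\alpha_c<\infty$). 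I expect the uniqueness of this sign change to be the main obstacle: each of (a)--(d) is in principle elementary, but verifying that $f'/f$ and $h'/h$ vanish exactly once, uniformly in the parameters $a$, $b$ and the dimension $N$, calls for a convexity argument rather than a direct computation of the second derivative.

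Finally, the asymptotics in (iii) follow by passing to the limit inside the formulas for $f$ and $h$, using that $(1-t^b)^{1^*/a}$ and $(1-s^a)^{1/b}$ converge monotonically as the exponents move. For instance, $a\uparrow1^*$ sends $f$ to $\frac{1-t}{1-t^b}$ and hence $\alpha_v\to\frac1{bE}$, while $b\to\infty$ sends $(1-t^b)^{1^*/a}\to1$ pointwise on $(0,1)$, so $f\to1-t$ and $\alpha_v\to0$; dually, $b\downarrow0$ sends $(1-s^a)^{1/b}\to0$ on $(0,1)$, so $\sup h\to h(0^+)=1$ and $\alpha_c\to\frac1E$, while $a\to\infty$ sends $h\to(1-s^{1^*})^{-1}$ with $\sup=\infty$, so $\alpha_c\to\infty$. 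The remaining limits are identical in spirit, and the boundary values $\frac1E$, $\frac1{bE}$, $\frac{a}{1^*E}$, $\infty$ match the thresholds of (i),(ii) by continuity.
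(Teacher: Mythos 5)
Your reduction to scalar problems and your treatment of parts (i)--(ii) are correct, and they are essentially the paper's own route: the identities $\alpha_v=\frac1E\inf f$ and $\alpha_c=\frac1E\sup h$ are exactly the content of Lemma~\ref{alpha^*-h} (the paper parametrizes the constraint by $t=\|u\|_{TV}^a/\|u\|_{1}^b\in(0,\infty)$ instead of by $\|u\|_{1}$ or $\|u\|_{TV}$, but after the change of variables the functions coincide), and your monotonicity and elementary-inequality lemmas (a)--(d) do give all the values and bounds in (i)--(ii), in a somewhat cleaner way than the paper's derivative computations. Note also that the uniqueness of the sign change of $f'/f$ and $h'/h$, which you single out as the main obstacle, is not needed anywhere in this theorem: for (i)--(ii) the existence of an interior extremum on the correct side of $1$ suffices, and what (iii) requires is not uniqueness but localization of the extremal point.

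The genuine gap is in part (iii). The principle ``the asymptotics follow by passing to the limit inside the formulas'' fails for exactly half of the eight limits, namely $\lim_{a\downarrow 0}\alpha_v=\frac1E$ ($b>1$), $\lim_{b\downarrow 0}\alpha_c=\frac1E$ ($a>1^*$), $\lim_{b\downarrow 1}\alpha_v=\frac1E$ ($a<1^*$) and $\lim_{a\downarrow 1^*}\alpha_c=\frac1E$ ($b<1$): in each of these the extremal point migrates to the endpoint $0$ of your parametrizing interval, and the pointwise limit of the function carries no information about the extremum. For instance, as $a\downarrow 0$ with $b>1$ one has $f_a(t)=\frac{1-t}{(1-t^b)^{1^*/a}}\to\infty$ for every fixed $t\in(0,1)$, while $\inf_t f_a\le 1$; no interchange of $\lim_a$ and $\inf_t$ can produce the answer $1$. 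Your one explicit argument in this family --- ``$b\downarrow0$ sends $(1-s^a)^{1/b}\to0$ on $(0,1)$, so $\sup h\to h(0^+)=1$'' --- is a non sequitur: pointwise convergence to $0$ in the interior is a priori compatible with $\sup_s h_b$ tending to any value $\ge 1$, since the suprema sit on a bump sliding toward $s=0$. These four limits are precisely where the paper works hardest: it locates the minimizer/maximizer $t_1$ relative to an explicit point $t_0$ via the critical-point equation (e.g.\ $t_1<t_0=\frac{b^{a/(a-1^*)}}{1-b^{a/(a-1^*)}}\to0$ as $b\downarrow0$, and $t_1>t_0\to\infty$ as $a\downarrow0$ or $b\downarrow 1$) and only then evaluates the functional along this migration. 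Your framework can be repaired, but by uniform estimates rather than limit interchange: for $b\downarrow 1$, $a<1^*$, the bound $1-t^b\le b(1-t)$ gives $f_b(t)\ge b^{-1^*/a}(1-t)^{1-1^*/a}\ge b^{-1^*/a}$, hence $\inf f_b\to1$; for $b\downarrow 0$, $a>1^*$, split at $s=\delta$ and use $\frac{1-s^a}{1-s^{1^*}}\le\frac{a}{1^*}$ to get $h_b\le\frac{a}{1^*}(1-\delta^a)^{1/b-1}\to0$ on $[\delta,1)$ and $h_b\le\frac{1}{1-\delta^{1^*}}$ on $(0,\delta]$; for $a\downarrow 0$, $b>1$, one must first show that the set where $f_a<1-\epsilon$ is confined to a compact subset of $(0,1)$ on which $f_a\to\infty$ uniformly. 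Supplying these endpoint-localizing estimates is the actual content of part (iii), and it is missing from the proposal.
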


In the end, we characterize the set of all maximizers of $D_\alpha$ for $1<q\leq 1^*$ 
by means of the corresponding $1$-dimensional function : 

\begin{thm}\label{thm5}
Let $1<q\leq 1^*$, $\alpha>0$, $a>0$ and $b>0$. 
Introduce $1$-dimensional functions $r(t)$, $\mu(t)$ and $f_\alpha(t)$ by
\begin{align*}
\begin{cases}
&r(t):=\frac{N}{t^{\frac{1}{a}}(t+1)^{\frac{a-b}{ab}}},\vspace{.2cm}\\
&\mu(t):=\frac{t^{\frac{N}{a}}(t+1)^{\frac{(a-b)N}{ab}-\frac{1}{b}}}{\omega_{N-1}N^{N-1}},\vspace{.2cm}\\
&f_\alpha(t):=\displaystyle\frac{
(1+t)^{(q-1)(\frac{N}{a}-\frac{N-1}{b})}+\alpha E_q\,t^{\frac{(q-1)N}{a}}
}{
(1+t)^{\frac{(q-1)N}{a}+\frac{N-q(N-1)}{b}}
}
\end{cases}
\end{align*}
for $t>0$. Then it holds $D_\alpha=\sup_{t>0}f_\alpha(t)$. 
Furthermore, letting $\Sigma$ and $\Pi$ be sets defined by 
\begin{align*}
\begin{cases}
&\Sigma:=\{
u_0\in BV\,|\,u_0\text{ is a maximizer of }D_\alpha
\},\\
&\Pi:=\{
t_0>0\,|\,\text{$t_0$ is a maximal point of $\sup_{t>0}f_\alpha(t)$}
\},
\end{cases}
\end{align*}
we obtain 
$
\Sigma=\{
\pm\mu(t_0)\chi_{B_{r(t_0)}(x_0)}\in BV\,|\, t_0\in\Pi\text{ and }x_0\in\Bbb R^N
\}$. 
\end{thm}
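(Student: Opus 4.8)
The plan is to reduce the $N$-dimensional maximization to the one-dimensional problem $\sup_{t>0}f_\alpha(t)$ by exploiting the Gagliardo--Nirenberg inequality together with its rigidity, i.e.\ the characterization of equality by characteristic functions of balls (Proposition \ref{GN-attain}). Two facts drive everything. First, for every $u\in BV$ one has the sharp bound $\|u\|_q^q\le E_q\|u\|_1^{N-q(N-1)}\|u\|_{TV}^{(q-1)N}$, where $N-q(N-1)=q-(q-1)N\ge0$ for $1<q\le1^*$. Second, equality holds precisely when $u=\pm c\,\chi_{B_\rho(x_0)}$ for some $c,\rho>0$ and $x_0\in\mathbb{R}^N$. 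Since both the objective and the constraint depend on $u$ only through $\|u\|_1$, $\|u\|_{TV}$ and $\|u\|_q^q$, the sharp bound rewrites the functional as an expression in $\|u\|_1$ and $\|u\|_{TV}$ alone, and the constraint collapses this to a single free parameter. Notably, no symmetrization is needed: the inequality already handles arbitrary $u$, and its rigidity handles the maximizers.

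For the identity $D_\alpha=\sup_{t>0}f_\alpha(t)$ I would argue by two inequalities. On the constraint surface both $\|u\|_1$ and $\|u\|_{TV}$ are strictly positive (a $BV$ function with vanishing total variation is constant, hence $0$ in $L^1(\mathbb{R}^N)$, and likewise $\|u\|_1=0$ forces $u=0$), so $t:=\|u\|_{TV}^a/\|u\|_1^b\in(0,\infty)$ is well defined; the constraint $\|u\|_{TV}^a+\|u\|_1^b=1$ then gives $\|u\|_1^b=(t+1)^{-1}$ and $\|u\|_{TV}^a=t(t+1)^{-1}$. Substituting these into the Gagliardo--Nirenberg bound for $\alpha\|u\|_q^q$ while keeping $\|u\|_1=(t+1)^{-1/b}$ exact yields, after simplifying the exponents, $\|u\|_1+\alpha\|u\|_q^q\le f_\alpha(t)\le\sup_{s>0}f_\alpha(s)$, proving $D_\alpha\le\sup_{s>0}f_\alpha(s)$. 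For the reverse inequality I would take, for each $t>0$, the test function $u=\mu(t)\chi_{B_{r(t)}}$, compute $\|u\|_1=(t+1)^{-1/b}$ and $\|u\|_{TV}=(t/(t+1))^{1/a}$ directly from the volume $\frac{\omega_{N-1}}{N}r(t)^N$ and perimeter $\omega_{N-1}r(t)^{N-1}$ of the ball $B_{r(t)}$, check that it lies on the constraint surface, and note that, being a ball function, it saturates Gagliardo--Nirenberg; evaluating the objective then gives exactly $f_\alpha(t)$, whence $D_\alpha\ge f_\alpha(t)$ for all $t$.

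To characterize $\Sigma$, the inclusion $\supseteq$ is immediate: for $t_0\in\Pi$ every function $\pm\mu(t_0)\chi_{B_{r(t_0)}(x_0)}$ lies on the constraint surface and attains the value $f_\alpha(t_0)=\sup_{t>0}f_\alpha(t)=D_\alpha$, since translation and sign leave $\|\cdot\|_1$, $\|\cdot\|_{TV}$ and $\|\cdot\|_q$ unchanged. For $\subseteq$, let $u_0$ be a maximizer and set $t_0:=\|u_0\|_{TV}^a/\|u_0\|_1^b\in(0,\infty)$. Reading the chain $D_\alpha=\|u_0\|_1+\alpha\|u_0\|_q^q\le\|u_0\|_1+\alpha E_q\|u_0\|_1^{N-q(N-1)}\|u_0\|_{TV}^{(q-1)N}=f_\alpha(t_0)\le D_\alpha$ backwards forces both inequalities to be equalities: the second gives $t_0\in\Pi$, while the first is equality in Gagliardo--Nirenberg, so by Proposition \ref{GN-attain} one has $u_0=\pm c\,\chi_{B_\rho(x_0)}$ for some $c,\rho>0$ and $x_0$. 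Finally, the map $(c,\rho)\mapsto(\|u_0\|_1,\|u_0\|_{TV})=\left(c\,\tfrac{\omega_{N-1}}{N}\rho^N,\ c\,\omega_{N-1}\rho^{N-1}\right)$ is a bijection of $(0,\infty)^2$ (one recovers $\rho=N\|u_0\|_1/\|u_0\|_{TV}$ and then $c$), and the values $\|u_0\|_1=(t_0+1)^{-1/b}$, $\|u_0\|_{TV}=(t_0/(t_0+1))^{1/a}$ are precisely those produced by $\mu(t_0),r(t_0)$; hence $c=\mu(t_0)$ and $\rho=r(t_0)$, giving the claimed form.

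The only genuinely nontrivial input is the rigidity of the Gagliardo--Nirenberg inequality---that equality forces $u_0$ to be a scaled, translated characteristic function of a ball---supplied by Proposition \ref{GN-attain} (and, for $q=1^*$, by the equality case of Mazya's inequality \eqref{mazya}, where $N-q(N-1)=0$ so the $\|u\|_1$-factor drops out and the argument specializes correctly). Everything else is bookkeeping: verifying the two exponent simplifications that turn the Gagliardo--Nirenberg-bounded objective into $f_\alpha(t)$, and checking that $\mu(t)$, $r(t)$ are calibrated so that $\mu(t)\chi_{B_{r(t)}}$ sits on the constraint surface. I expect the exponent algebra---confirming that the $(1+t)$-powers in $f_\alpha$ collapse to $(1+t)^{-1/b}$ in the first term and to the stated exponent in the second---to be the most error-prone but routine part.
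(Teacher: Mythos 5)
Your proposal is correct, and it rests on exactly the same two pillars as the paper's proof: the sharp bound $\|u\|_q^q\le E_q\|u\|_1^{q-(q-1)N}\|u\|_{TV}^{(q-1)N}$ and its rigidity, Proposition \ref{GN-attain} (ii). The upper bound $D_\alpha\le\sup_{t>0}f_\alpha(t)$ and the inclusion $\Sigma\subseteq\{\pm\mu(t_0)\chi_{B_{r(t_0)}(x_0)}\}$ follow the paper's Lemma \ref{D-f-g} and Lemma \ref{D-imply-f(t)} almost verbatim. Where you genuinely diverge is in the lower bound and the inclusion $\supseteq$: the paper scales a fixed maximizer $v$ of $E_q$, setting $v_\lambda(x)=K\lambda v(\lambda^{1/N}x)$ with $K(\lambda)$ implicitly defined by the constraint, and then needs continuity, monotonicity and the limits of $K(\lambda)$ (an intermediate-value argument) to see that the ratio $\|v_\lambda\|_{TV}^a/\|v_\lambda\|_1^b$ sweeps out all of $(0,\infty)$; you instead verify directly that $u_t:=\mu(t)\chi_{B_{r(t)}}$ satisfies $\|u_t\|_1=(t+1)^{-1/b}$ and $\|u_t\|_{TV}=\bigl(t/(t+1)\bigr)^{1/a}$, hence sits on the constraint surface with ratio exactly $t$, saturates the Gagliardo--Nirenberg inequality, and has objective value $f_\alpha(t)$. (I checked the exponent algebra: both identities hold, using $\frac{a-b}{ab}-\frac1b=-\frac1a$ and $q-(q-1)N=N-q(N-1)$.) This is shorter, dispenses with the implicit-function/continuity step entirely, and makes $\supseteq$ immediate; its only cost is that it takes the formulas for $r$ and $\mu$ as given rather than deriving them, which is in effect what the paper's scaling computation does. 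Likewise, in $\subseteq$ your identification step --- injectivity of $(c,\rho)\mapsto\bigl(c\,\tfrac{\omega_{N-1}}{N}\rho^N,\,c\,\omega_{N-1}\rho^{N-1}\bigr)$ --- replaces the paper's device of inverting the strictly decreasing function $r$ and invoking uniqueness of the solution $\nu$ of the constraint equation \eqref{lambda0-condition}; the two are equivalent, and both correctly force $c=\mu(t_0)$, $\rho=r(t_0)$ once $t_0:=\|u_0\|_{TV}^a/\|u_0\|_1^b$ is known to lie in $\Pi$ from the equality chain. No gaps.
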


\noindent
Theorem \ref{thm5} is essentially proved by a scaling argument in \cite{N2} 
together with the fact that maximizers of $E_q$ consist of functions of the form 
$u=\lambda\chi_B$ with $\lambda\in\Bbb R\setminus\{0\}$ and a ball $B\subset\Bbb R^N$, see Proposition \ref{GN-attain} (ii), namely the information on maximizers of $E_{1^*}$ (the isoperimetric inequality) is transmitted to $E_q$ for any $1<q\leq 1^*$. On the other hand, it seems to be difficult to obtain a similar characterization to the problem based on $W^{1,p}$ with $1<p<N$  
since we do not know the relation between maximizers of the Sobolev inequality (called Talenti's function) and those of the corresponding Gagliardo-Nirenberg inequality. 
 
\medskip

For the limiting case $p=N$, a maximizing problem on $W^{1,N}$ corresponding to \eqref{basic-pro}  was considered in \cite{IW}. 
As another characterization of Sobolev's embedding in this case, 
we know the Moser-Trudinger type inequalities. 
Attainability problems associated with those inequalities 
also have been investigated in rich literature. 
Among others, we refer to \cite{IIW, I, LR, OST, R} and related works \cite{L, L2, LLZ, Li, LY, N}, in which similar problems to $D_\alpha$ were studied. 

\medskip

This paper is organized as follows. Section 2 is devoted to prepare preliminary facts 
and to prove Theorem \ref{thm5}. We show Theorem \ref{thm1}-\ref{thm2} and Theorem \ref{thm3}-\ref{thm4} in Section 3 and Section 4, respectively. Throughout the paper, the notation $\|\cdot\|_{p}$ denotes the standard $L^p$-norm. We pass to subsequences freely.

\section{Preliminaries}
\noindent

In this section, we collect several lemmas needed for the proofs of main theorems. 
First, we recall the definition of the space of bounded variation $BV$. 
$BV$ is a Banach space endowed with the norm $\|u\|_{BV}:=\|u\|_{TV}+\|u\|_{1}$, 
where the total variation $\|u\|_{TV}$ is given by 
$$
\|u\|_{TV}:=\sup\left\{
\int_{\Bbb R^N}u\operatorname{div}\psi\,\bigg|\,\psi=\{\psi_1,\cdots,\psi_N\}\subset C^1_c, \,
\|\psi\|_{\infty}:=\left\|\left(\sum_{i=1}^N|\psi_i|^2\right)^{\frac{1}{2}}\right\|_{\infty}\leq 1
\right\}. 
$$
The Sobolev type embedding on $BV$ states $W^{1,1}\hookrightarrow BV\hookrightarrow L^r$ for $1\leq r\leq 1^*$. 
We introduce $E_q$ and $\tilde E_q$ by 
\begin{align*}
\begin{cases}
&E_q=E_q(u):=\sup_{u\in BV\setminus\{0\}}\frac{
\|u\|_{q}^q
}{
\|u\|_{1}^{q-(q-1)N}\|u\|_{TV}^{(q-1)N}
},\\
&\tilde E_q=\tilde E_q(u):=\sup_{u\in W^{1,1}\setminus\{0\}}\frac{
\|u\|_{q}^q
}{
\|u\|_{1}^{q-(q-1)N}\|\nabla u\|_{1}^{(q-1)N}
}
\end{cases}
\end{align*}
and similarly $D_\alpha$ and $\tilde D_\alpha$ by 
\begin{align*}
\begin{cases}
&D_\alpha:=\sup_{\|u\|_{TV}^a+\|u\|_{1}^b=1}\left(\|u\|_{1}+\alpha\|u\|_{q}^q\right),\\
&\tilde D_\alpha:=\sup_{\|\nabla u\|_{1}^a+\|u\|_{1}^b=1}\left(
\|u\|_{1}+\alpha\|u\|_{q}^q
\right)
\end{cases}
\end{align*}
for $1<q\leq 1^*$ and $a,b,\alpha>0$. 

\begin{prop}\label{GN-attain}
Let $1<q\leq 1^*$. 

\medskip

\noindent
{\rm(i)} There holds $E_q=\tilde E_q=\left(\frac{1}{N^{N-1}\omega_{N-1}}\right)^{q-1}$. 

\medskip

\noindent
{\rm(ii)} $E_q$ is attained by functions of the form $u=\lambda\chi_B\in BV$ with $\lambda\in \Bbb R\setminus\{0\}$ and a ball $B\subset \Bbb R^N$. 
Moreover, the maximizer of $E_q$ necessarily has this form. 

\medskip

\noindent
{\rm(iii)} $\tilde E_q$ is not attained in $W^{1,1}\setminus\{0\}$. 
\end{prop}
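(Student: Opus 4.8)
The plan is to obtain parts (i) and (ii) for $E_q$ directly from Mazya's inequality \eqref{mazya} by interpolating the $L^q$-norm between $L^1$ and $L^{1^*}$, and then to transfer the conclusions to $\tilde E_q$ by a density argument. First I would prove the upper bound $E_q\le\left(N^{N-1}\omega_{N-1}\right)^{-(q-1)}$. Since $1<q\le 1^*$, the elementary interpolation inequality $\|u\|_q\le\|u\|_1^{1-\theta}\|u\|_{1^*}^{\theta}$ holds with $\theta:=\frac{N(q-1)}{q}$, determined by $\frac1q=(1-\theta)+\theta\frac{N-1}{N}$. Since $q(1-\theta)=q-(q-1)N$ and $q\theta=(q-1)N$, raising to the $q$-th power and inserting $\|u\|_{1^*}\le E^{1/1^*}\|u\|_{TV}$ gives
\[
\|u\|_q^q\le\|u\|_1^{q-(q-1)N}\|u\|_{1^*}^{(q-1)N}\le\left(N^{N-1}\omega_{N-1}\right)^{-(q-1)}\|u\|_1^{q-(q-1)N}\|u\|_{TV}^{(q-1)N},
\]
where I used $E^{(q-1)N/1^*}=E^{(q-1)(N-1)}=\left(N^{N-1}\omega_{N-1}\right)^{-(q-1)}$ by \eqref{mazya}. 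This is exactly the desired bound for $E_q$.

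For the matching lower bound and the attainment in (ii), I would test the Rayleigh quotient on $u=\lambda\chi_B$. For a characteristic function of a ball both steps above become equalities: the interpolation inequality is sharp since $\|\lambda\chi_B\|_p=|\lambda||B|^{1/p}$ for every $p$, and Mazya's inequality is sharp since balls are the isoperimetric extremals. A direct computation then yields the value $\left(\frac{|B|^{N-1}}{P(B)^N}\right)^{q-1}=\left(N^{N-1}\omega_{N-1}\right)^{-(q-1)}$, establishing $E_q=\left(N^{N-1}\omega_{N-1}\right)^{-(q-1)}$ and that it is attained by every $\lambda\chi_B$. To see maximizers necessarily have this form, I would note that any maximizer forces equality throughout the chain, in particular equality in Mazya's inequality; such $u$ is thus a maximizer of $E=E_{1^*}$, whose extremals were already recorded in the introduction to be precisely $\lambda\chi_B$ with $\lambda\ne0$ and $B$ a ball.

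It remains to treat $\tilde E_q$. Since $W^{1,1}\subset BV$ and $\|u\|_{TV}=\|\nabla u\|_1$ there, we trivially have $\tilde E_q\le E_q$. For the reverse inequality I would mollify the extremal, setting $u_\varepsilon:=\chi_B*\rho_\varepsilon\in C_c^\infty$, so that $u_\varepsilon\to\chi_B$ in $L^1$ with $\|u_\varepsilon\|_\infty\le1$, whence $u_\varepsilon\to\chi_B$ in $L^q$ by interpolation, while $\|\nabla u_\varepsilon\|_1\to\|\chi_B\|_{TV}$ by strict approximation; passing to the limit in the quotient shows $\tilde E_q\ge E_q$, which proves (i). For the non-attainment in (iii) I would argue by contradiction: a maximizer $u\in W^{1,1}\setminus\{0\}$ would force equality in the Sobolev inequality $\|u\|_{1^*}\le E^{1/1^*}\|\nabla u\|_1$, hence $u$ would be an extremal of $E$ and therefore of the form $\lambda\chi_B$, contradicting $\chi_B\notin W^{1,1}$.

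I expect the main obstacle to be the rigidity step: justifying that equality in the maximizing quotient propagates to equality in Mazya's inequality, so that the sharp equality case of the isoperimetric inequality (recalled in the introduction) can be invoked. A secondary point requiring care is the mollification argument, which must secure \emph{simultaneous} $L^q$ convergence and strict convergence of the total variation; the conceptual heart of (iii) is precisely that the $BV$-extremals $\chi_B$ fail to belong to $W^{1,1}$.
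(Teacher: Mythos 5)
Your proposal is correct and follows essentially the same route as the paper's proof: H\"older interpolation combined with Mazya's inequality for the upper bound, explicit computation on $\lambda\chi_B$ for the value and attainment, the rigidity argument that equality in the maximizing quotient forces equality in Mazya's step (hence an isoperimetric extremal) for the characterization, and a density argument plus the contradiction $\chi_B\notin W^{1,1}$ for parts (i) and (iii) concerning $\tilde E_q$. The only cosmetic difference is in proving $\tilde E_q\ge E_q$: you mollify the explicit extremal $\chi_B$ and use the uniform $L^\infty$ bound to get $L^q$ convergence, whereas the paper approximates a maximizer by smooth $BV$ functions under strict convergence and invokes Fatou's lemma --- both are standard and equally valid.
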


\begin{proof}
First, recall the facts that it holds  
$E_{1^*}=\frac{1}{N\omega_{N-1}^{\frac{1}{N-1}}}$ and $E_{1^*}$ is attained only by functions of the form $u=\lambda\chi_B\in BV$ with $\lambda\in \Bbb R\setminus\{0\}$ and a ball $B\subset \Bbb R^N$. 

\medskip

\noindent
(i) By H\"older's inequality and Mazya's inequality, we have for $u\in BV$
\begin{align*}
&\|u\|_{q}^q\leq\|u\|_{1}^{q-(q-1)N}\|u\|_{1^*}^{(q-1)N}\\
&\leq\|u\|_{1}^{q-(q-1)N}\left(\frac{1}{N^{\frac{N-1}{N}}\omega_{N-1}^{\frac{1}{N}}}\|u\|_{TV}\right)^{(q-1)N}
=\left(\frac{1}{N^{N-1}\omega_{N-1}}\right)^{q-1}\|u\|_{1}^{q-(q-1)N}\|u\|_{TV}^{(q-1)N}, 
\end{align*}
which implies $E_q\leq\left(\frac{1}{N^{N-1}\omega_{N-1}}\right)^{q-1}$. 
Let $u_0=\chi_{B_1(0)}\in BV$. Then we can compute 
$\|u_0\|_{1}=\|u_0\|_{q}^q=\frac{\omega_{N-1}}{N}$ and $\|u_0\|_{TV}=\omega_{N-1}$, 
and then we observe $E_q(u_0)=\left(\frac{1}{N^{N-1}\omega_{N-1}}\right)^{q-1}$. 
Hence, $u_0$ is a maximizer of $E_q$ and it follows $E_q=\left(\frac{1}{N^{N-1}\omega_{N-1}}\right)^{q-1}$. 

\medskip

Next, we prove $E_q=\tilde E_q$. 
It is enough to show $E_q\leq \tilde E_q$ since the converse inequality is obtained 
by the facts $W^{1,1}\subset BV$ and $\|\nabla u\|_{1}=\|u\|_{TV}$ for $u\in W^{1,1}$. 
Let $u_0\in BV\setminus\{0\}$ be a maximizer of $E_q$, 
where note that the existence of $u_0$ has been already established as above. 
By an approximation argument, there exists a sequence $\{u_n\}_{n=1}^\infty\subset BV\cap C^\infty$ 
such that $u_n\to u_0$ in $L^1$ and $\|u_n\|_{TV}\to \|u_0\|_{TV}$, 
and up to a subsequence, $u_n\to u_0$ a.e. on $\Bbb R^N$. 
We observe that $u_n\in W^{1,1}$ with $\|u_n\|_{TV}=\|\nabla u_n\|_{1}$. 
Indeed, by using the fact that there holds $\|v\|_{TV(\Omega)}=\int_\Omega|\nabla v|$ for any $v\in BV(\Omega)\cap C^\infty(\Omega)$ with a bounded domain having its sufficiently smooth boundary, 
we see 
\begin{align*}
&\|u_n\|_{TV}=\sup_{R>0}\|u_n\|_{TV(B_R(0))}
=\sup_{R>0}\int_{B_R(0)}|\nabla u_n|=\lim_{R\to\infty}\int_{B_R(0)}|\nabla u_n|=\|\nabla u_n\|_{1}<\infty, 
\end{align*}
where the last equality is shown by Lebesgue's monotone convergence theorem. 
Then it holds $u_n\ne 0$ in $W^{1,1}$ for large $n$ since 
$\|\nabla u_n\|_{1}=\|u_n\|_{TV}\to \|u_0\|_{TV}>0$ as $n\to\infty$. 
Now we see by the convergences of $u_n$ together with Fatou's lemma, 
\begin{align*}
E_q=E_q(u_0)\leq\liminf_{n\to\infty}E_q(u_n)\leq\limsup_{n\to\infty}E_q(u_n)=\limsup_{n\to\infty}\tilde E_q(u_n)
\leq \tilde E_q. 
\end{align*}
Thus the assertion (i) has been proved.

\medskip

\noindent
(ii) Let $u_0=\lambda\chi_B\in BV$ for $\lambda\in\Bbb R\setminus\{0\}$ and a ball $B=B_R(x_0)$ 
with a radius $R>0$ centered at $x_0\in\Bbb R^N$. 
Then we can compute 
\begin{align*}
\|u_0\|_{1}=|\lambda|R^N\frac{\omega_{N-1}}{N},\quad \|u_0\|_{q}^q=|\lambda|^q R^N\frac{\omega_{N-1}}{N}\quad\text{and}\quad
\|u_0\|_{TV}=|\lambda|R^{N-1}\omega_{N-1}, 
\end{align*}
and thus these relations together with the assertion (i) show 
$E_q(u_0)=\left(\frac{1}{N^{N-1}\omega_{N-1}}\right)^{q-1}=E_q$. 
Hence, $u_0$ is a maximizer of $E_q$. 

\medskip

Next, assume that $E_q$ is attained by $u_0\in BV\setminus\{0\}$. 
Then by H\"older's inequality and the assertion (i), we have 
\begin{align*}
&\left(\frac{1}{N^{N-1}\omega_{N-1}}\right)^{q-1}=E_q=E_q(u_0)\leq E_{1^*}(u_0) ^{(q-1)(N-1)}\leq E_{1^*}^{(q-1)(N-1)}
=\left(\frac{1}{N^{N-1}\omega_{N-1}}\right)^{q-1}, 
\end{align*}
which shows that $u_0$ is a maximizer of $E_{1^*}$. 
Hence, $u_0=\lambda \chi_{B}$ for some $\lambda\in\Bbb R\setminus\{0\}$ and a ball $B\subset\Bbb R^N$. 
The assertion (ii) has been proved. 

\medskip

\noindent
(iii) By contradiction, assume that $\tilde E_q$ is attained by $u_0\in W^{1,1}\setminus\{0\}$. 
Then the assertion (i) and the facts $W^{1,1}\subset BV$ and $\|\nabla u\|_{1}=\|u\|_{TV}$ 
for $u\in W^{1,1}$ imply that $u_0\in BV\setminus\{0\}$ is a maximizer of $E_q$. 
Then the assertion (ii) shows that $u_0=\lambda \chi_B$ for $\lambda\in\Bbb R\setminus\{0\}$ and a ball $B\subset\Bbb R^N$, 
which is a contradiction to $u_0\in W^{1,1}$. 
The assertion (iii) has been proved. 
\end{proof}

\begin{lem}\label{D-f-g}
Let $1<q\leq 1^*$, $\alpha>0$, $a>0$ and $b>0$. 
Then there hold $D_\alpha=\sup_{t>0}f_\alpha(t)$ and $\alpha_v=\frac{1}{E_q}\inf_{t>0}g(t)$  where for $t>0$, 
\begin{align*}
\begin{cases}
&f_\alpha(t):=\displaystyle\frac{
(1+t)^{(q-1)(\frac{N}{a}-\frac{N-1}{b})}+\alpha E_q\,t^{\frac{(q-1)N}{a}}
}{
(1+t)^{\frac{(q-1)N}{a}+\frac{N-q(N-1)}{b}}
},\\
&g(t):=\displaystyle\frac{
\left((1+t)^{\frac{1}{b}}-1\right)(1+t)^{(q-1)(\frac{N}{a}-\frac{N-1}{b})}
}{
t^{\frac{(q-1)N}{a}}
}. 
\end{cases}
\end{align*}
\end{lem}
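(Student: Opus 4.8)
The plan is to reduce both identities to elementary one‑variable optimizations by exploiting the sharp Gagliardo–Nirenberg inequality of Proposition \ref{GN-attain}. The starting observation is that for any admissible $u$ the constraint forces $0<\|u\|_{TV}<1$ and $0<\|u\|_{1}<1$: a function with vanishing total variation is constant, hence $0$ in $L^1$, so $s:=\|u\|_{TV}>0$ and $w:=\|u\|_{1}>0$, and then $w^b=1-s^a<1$ gives $w<1$. Proposition \ref{GN-attain} (i) supplies the pointwise bound $\|u\|_{q}^q\le E_q\,w^{\,q-(q-1)N}s^{(q-1)N}$, with equality precisely for characteristic functions of balls by part (ii).

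First I would argue that both extremal problems decouple over the pair $(s,w)$. For $D_\alpha$ the functional is $w+\alpha\|u\|_q^q$, and for a fixed point $(s,w)$ on the constraint curve the largest admissible value of $\|u\|_q^q$ is $E_q\,w^{\,q-(q-1)N}s^{(q-1)N}$, attained by a ball indicator. Conversely, every pair $(s,w)$ with $s,w>0$ is realized by some $u=\mu\chi_{B_r}$: solving $s=\mu r^{N-1}\omega_{N-1}$ and $w=\mu r^N\omega_{N-1}/N$ gives the unique positive solution $r=Nw/s$, $\mu=s/(r^{N-1}\omega_{N-1})$. Hence
$$D_\alpha=\sup_{s^a+w^b=1,\,s,w>0}\Bigl(w+\alpha E_q\,w^{\,q-(q-1)N}s^{(q-1)N}\Bigr).$$
The same reasoning applies to $\alpha_v$: since $1-w>0$ on the constraint curve, for fixed $(s,w)$ the ratio $(1-w)/\|u\|_q^q$ is minimized by maximizing the denominator, again giving the ball indicator, so
$$\alpha_v=\frac{1}{E_q}\inf_{s^a+w^b=1,\,s,w>0}\frac{1-w}{w^{\,q-(q-1)N}s^{(q-1)N}}.$$

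Next I would introduce the single parameter $t:=s^a/w^b$. On the constraint curve this yields $s^a=t/(1+t)$ and $w^b=1/(1+t)$, i.e. $s=(t/(1+t))^{1/a}$ and $w=(1+t)^{-1/b}$, and as $t$ runs through $(0,\infty)$ the pair $(s,w)$ runs through the whole curve (the endpoints $s=0$ or $w=0$ correspond to $u=0$ and are excluded). Substituting these expressions and using the identity $N-q(N-1)=q-(q-1)N$ to collect the exponents of $(1+t)$, the reduced functional for $D_\alpha$ becomes exactly $f_\alpha(t)$ and the reduced ratio for $\alpha_v$ becomes exactly $g(t)$, giving $D_\alpha=\sup_{t>0}f_\alpha(t)$ and $\alpha_v=\frac{1}{E_q}\inf_{t>0}g(t)$.

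The substantive content is entirely in Proposition \ref{GN-attain}, which lets me replace the infinite‑dimensional optimization over $BV$ by a two‑parameter problem in the norms $(s,w)$; after that the only work is bookkeeping. The place to be careful is the exponent arithmetic in the change of variables—in particular checking that the two terms of $f_\alpha$ share the common denominator $(1+t)^{(q-1)N/a+(N-q(N-1))/b}$ and that the surviving numerator exponents collapse to $(q-1)\bigl(\tfrac{N}{a}-\tfrac{N-1}{b}\bigr)$ and $\tfrac{(q-1)N}{a}$ respectively—but this is routine once the identity $N-q(N-1)=q-(q-1)N$ is in hand.
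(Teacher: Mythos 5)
Your proof is correct, and its core reduction is the same as the paper's: use the sharp Gagliardo--Nirenberg inequality of Proposition \ref{GN-attain} to bound the functional above by a function of $t=\|u\|_{TV}^a/\|u\|_1^b$, then saturate the bound with GN extremizers. Where you genuinely diverge is in how the bound is saturated, i.e.\ how one shows that every $t\in(0,\infty)$ is actually realized with equality. The paper fixes one maximizer $v$ of $E_q$ and sweeps the scaling family $v_\lambda(x)=K\lambda v(\lambda^{1/N}x)$, where $K=K(\lambda)$ is only implicitly defined by the constraint $K^a\lambda^{a/N}\|v\|_{TV}^a+K^b\|v\|_1^b=1$; the identity $D_\alpha\ge\sup_{t>0}f_\alpha(t)$ then rests on a continuity-and-limits argument showing that $t=\frac{1}{K^b\|v\|_1^b}-1$ covers all of $(0,\infty)$ as $\lambda$ ranges over $(0,\infty)$. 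You instead parametrize the constraint curve explicitly, $w^b=\frac{1}{1+t}$, $s^a=\frac{t}{1+t}$, and realize each point $(s,w)$ by an explicit ball indicator $\mu\chi_{B_r}$ with $r=Nw/s$ and $\mu=s/(r^{N-1}\omega_{N-1})$, which attains equality in the GN inequality. This needs only the attainability half of Proposition \ref{GN-attain}(ii) plus the computed norms of ball indicators, and it replaces the implicit-function/continuity step by elementary algebra; it also makes the extremizer attached to each $t$ explicit, essentially anticipating the characterization proved later in Theorem \ref{thm5}. What the paper's version buys in exchange is a scaling device that works verbatim for an arbitrary maximizer of $E_q$ (no explicit norms needed) and is reused in the proofs of Lemma \ref{alpha^*-h} and Theorem \ref{thm5}. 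Your exponent bookkeeping, the key identity $N-q(N-1)=q-(q-1)N$, and the positivity observations ($0<\|u\|_1<1$, hence $1-\|u\|_1>0$ in the $\alpha_v$ reduction) are all correct, so the argument is complete.
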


\begin{proof}
For $u\in BV$ with $\|u\|_{TV}^a+\|u\|_{1}^b=1$, we see 
\begin{align*}
&\|u\|_{1}+\alpha\|u\|_{q}^q\leq\|u\|_{1}+\alpha E_q\|u\|_{1}^{q-(q-1)N}\|u\|_{TV}^{(q-1)N}\\
&=\frac{
\|u\|_{1}(\|u\|_{TV}^a+\|u\|_{1}^b)^{(q-1)(\frac{N}{a}-\frac{N-1}{b})}
+\alpha E_q\|u\|_{1}^{q-(q-1)N}\|u\|_{TV}^{(q-1)N}
}{
(\|u\|_{TV}^a+\|u\|_{1}^b)^{\frac{(q-1)N}{a}+\frac{N-q(N-1)}{b}}
}\\
&=\frac{
\left(1+\frac{\|u\|_{TV}^a}{\|u\|_{1}^b}\right)^{(q-1)(\frac{N}{a}-\frac{N-1}{b})}
+\alpha E_q\left(\frac{\|u\|_{TV}^a}{\|u\|_{1}^b}\right)^{\frac{(q-1)N}{a}}
}{\left(1+\frac{\|u\|_{TV}^a}{\|u\|_{1}^b}\right)^{\frac{(q-1)N}{a}+\frac{N-q(N-1)}{b}}}\\
&=f_\alpha\left(\frac{\|u\|_{TV}^a}{\|u\|_{1}^b}\right)
\leq\sup_{t>0}f_\alpha(t),  
\end{align*}
which yields $D_\alpha\leq\sup_{t>0}f_\alpha(t)$. 
On the other hand, let $v\in BV\setminus\{0\}$ be a maximizer of $E_q$. 
The existence of $v$ was obtained by Proposition \ref{GN-attain} (ii). 
For $\lambda>0$, we define $v_\lambda(x):=K\lambda v(\lambda^{\frac{1}{N}}x)$, 
where $K=K(\lambda)>0$ is determined uniquely by 
\begin{align}\label{K-def1}
\|v_\lambda\|_{TV}^a+\|v_\lambda\|_{1}^b
=K^a\lambda^{\frac{a}{N}}\|v\|_{TV}^a+K^b\|v\|_{1}^b=1. 
\end{align}
Then we observe for $\lambda>0$, 
\begin{align*}
&D_\alpha\geq\|v_\lambda\|_{1}+\alpha\|v_\lambda\|_{q}^q\\
&=\frac{
\|v_\lambda\|_{1}(\|v_\lambda\|_{TV}^a+\|v_\lambda\|_{1}^b)^{(q-1)(\frac{N}{a}-\frac{N-1}{b})}
+\alpha E_q\|v_\lambda\|_{1}^{q-(q-1)N}\|v_\lambda\|_{TV}^{(q-1)N}
}{
(\|v_\lambda\|_{TV}^a+\|v_\lambda\|_{1}^b)^{\frac{(q-1)N}{a}+\frac{N-q(N-1)}{b}}
}\\
&=\frac{
\left(1+\frac{\|v_\lambda\|_{TV}^a}{\|v_\lambda\|_{1}^b}\right)^{(q-1)(\frac{N}{a}-\frac{N-1}{b})}
+\alpha E_q\left(\frac{\|v_\lambda\|_{TV}^a}{\|v_\lambda\|_{1}^b}\right)^{\frac{(q-1)N}{a}}
}{
\left(1+\frac{\|v_\lambda\|_{TV}^a}{\|v_\lambda\|_{1}^b}\right)^{\frac{(q-1)N}{a}+\frac{N-q(N-1)}{b}}
}\\
&=f_\alpha\left(\frac{\|v_\lambda\|_{TV}^a}{\|v_\lambda\|_{1}^b}\right)
=f_\alpha\left(K^{a-b}\lambda^{\frac{a}{N}}\frac{\|v\|_{TV}^a}{\|v\|_{1}^b}\right)
=f_\alpha\left(\frac{1}{K^b\|v\|_{1}^b}-1\right). 
\end{align*}
By the equation \eqref{K-def1}, we see that $K=K(\lambda)$ is a continuous 
function on $(0,\infty)$ satisfying $K<\frac{1}{\|v\|_{1}}$ for $\lambda>0$, $\lim_{\lambda\downarrow 0}K=\frac{1}{\|v\|_{1}}$ and $\lim_{\lambda\to\infty}K=0$. Thus we obtain 
$D_\alpha\geq\sup_{\lambda>0}f_\alpha\left(\frac{1}{K^b\|v\|_{1}^b}-1\right)
=\sup_{t>0}f_\alpha(t)$. Thus we have proved $D_\alpha=\sup_{t>0}f_\alpha(t)$. 

\medskip

Next, for $u\in BV$ with $\|u\|_{TV}^a+\|u\|_{1}^b=1$, we see 
\begin{align*}
&\frac{1-\|u\|_{1}}{\|u\|_{q}^q}\geq\frac{1-\|u\|_{1}}{E_q\|u\|_{1}^{q-(q-1)N}\|u\|_{TV}^{(q-1)N}}\\
&=\frac{
\left(\left(1+\frac{\|u\|_{TV}^a}{\|u\|_{1}^b}\right)^{\frac{1}{b}}-1\right)
\left(
1+\frac{\|u\|_{TV}^a}{\|u\|_{1}^b}
\right)^{(q-1)(\frac{N}{a}-\frac{N-1}{b})}
}{
E_q \left(\frac{\|u\|_{TV}^a}{\|u\|_{1}^b}\right)^{\frac{(q-1)N}{a}}
}=\frac{1}{E_q}g\left(
\frac{\|u\|_{TV}^a}{\|u\|_{1}^b}
\right)\geq\frac{1}{E_q}\inf_{t>0}g(t), 
\end{align*}
and thus there holds $\alpha_v\geq\frac{1}{E_q}\inf_{t>0}g(t)$. 
On the other hand, let $v\in BV\setminus\{0\}$ be a maximizer of $E_q$ 
and define $v_\lambda$ as above. Then we see for $\lambda>0$, 
\begin{align*}
&\alpha_v\leq\frac{1-\|v_\lambda\|_{1}}{\|v_\lambda\|_{q}^q}
=\frac{1-\|v_\lambda\|_{1}}{E_q\|v_\lambda\|_{1}^{q-(q-1)N}\|v_\lambda\|_{TV}^{(q-1)N}}\\
&=\frac{
\left(\left(1+\frac{\|v_\lambda\|_{TV}^a}{\|v_\lambda\|_{1}^b}\right)^{\frac{1}{b}}-1\right)
\left(1+\frac{\|v_\lambda\|_{TV}^a}{\|v_\lambda\|_{1}^b}\right)^{(q-1)(\frac{N}{a}-\frac{N-1}{b})}
}{
E_q\left(\frac{\|v_\lambda\|_{TV}^a}{\|v_\lambda\|_{1}^b}\right)^{\frac{(q-1)N}{a}}
}\\
&=\frac{1}{E_q}g\left(\frac{\|v_\lambda\|_{TV}^a}{\|v_\lambda\|_{1}^b}\right)
=\frac{1}{E_q}g\left(K^{a-b}\lambda^{\frac{a}{N}}\frac{\|v\|_{TV}^a}{\|v\|_{1}^b}\right)
=\frac{1}{E_q}g\left(\frac{1}{K^b\|v\|_{1}^b}-1\right), 
\end{align*}
and thus we get $\alpha_v\leq\frac{1}{E_q}\inf_{\lambda>0}g\left(\frac{1}{K^b\|v\|_{1}^b}-1\right)=\frac{1}{E_q}\inf_{t>0}g(t)$. 
Thus we have proved $\alpha_v=\frac{1}{E_q}\inf_{t>0}g(t)$. 
\end{proof}

\begin{lem}\label{D-imply-f(t)}
Let $1<q\leq 1^*$, $\alpha>0$, $a>0$ and $b>0$. Assume that $D_\alpha$ is attained by $u_0\in BV$. Then there exist $R>0$, $x_0\in\Bbb R^N$ and $\lambda_0\in\Bbb R\setminus\{0\}$ such that 
$u_0=\lambda_0\chi_{B_R(x_0)}$, where the coefficient $\lambda_0$ satisfies 
\begin{align}\label{lambda0-condition}
(|\lambda_0|R^{N-1}\omega_{N-1})^a+\left(|\lambda_0|R^N\frac{\omega_{N-1}}{N}\right)^b=1. 
\end{align}
In addition, $\sup_{t>0}f_\alpha(t)$ is attained at $t=(\frac{N}{R})^b(|\lambda_0|R^{N-1}\omega_{N-1})^{a-b}$. 
\end{lem}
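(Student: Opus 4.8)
The plan is to reuse the chain of inequalities already established in the proof of Lemma~\ref{D-f-g} and to read off its equality cases for the maximizer $u_0$. Recall that for every admissible competitor $u$ (those satisfying $\|u\|_{TV}^a+\|u\|_1^b=1$, which in particular forces $u\neq0$) one has
$$
\|u\|_1+\alpha\|u\|_q^q\leq\|u\|_1+\alpha E_q\|u\|_1^{q-(q-1)N}\|u\|_{TV}^{(q-1)N}=f_\alpha\left(\frac{\|u\|_{TV}^a}{\|u\|_1^b}\right)\leq\sup_{t>0}f_\alpha(t)=D_\alpha,
$$
where the first inequality is precisely the Gagliardo--Nirenberg inequality defining $E_q$, and the last equality is Lemma~\ref{D-f-g}. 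First I would apply this to the maximizer $u_0$, noting that $u_0\neq0$ since the constraint is incompatible with the zero function (and hence $\|u_0\|_1,\|u_0\|_{TV}>0$). Because $u_0$ attains $D_\alpha$, the two outer terms coincide, so \emph{every} inequality in the displayed chain is forced to be an equality.

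The key step is then to exploit equality in the Gagliardo--Nirenberg inequality. Since $\alpha>0$, equality in the first inequality gives $\|u_0\|_q^q=E_q\|u_0\|_1^{q-(q-1)N}\|u_0\|_{TV}^{(q-1)N}$, that is $E_q(u_0)=E_q$, so $u_0$ is a maximizer of $E_q$. At this point the rigidity supplied by Proposition~\ref{GN-attain}~(ii) does all the work: every maximizer of $E_q$ is of the form $u_0=\lambda_0\chi_{B_R(x_0)}$ with $\lambda_0\in\Bbb R\setminus\{0\}$, $R>0$ and $x_0\in\Bbb R^N$, which is exactly the asserted structure.

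It then remains to translate the geometric data into the stated formulas. Substituting $u_0=\lambda_0\chi_{B_R(x_0)}$ into the constraint, using the elementary identities $\|u_0\|_{TV}=|\lambda_0|R^{N-1}\omega_{N-1}$ and $\|u_0\|_1=|\lambda_0|R^N\frac{\omega_{N-1}}{N}$ (as computed in the proof of Proposition~\ref{GN-attain}~(ii)), immediately yields the normalization \eqref{lambda0-condition}. For the location of the maximal point, observe that the equality of the middle and rightmost terms above reads $f_\alpha(t_0)=\sup_{t>0}f_\alpha(t)$ with $t_0:=\|u_0\|_{TV}^a/\|u_0\|_1^b$, so the supremum of $f_\alpha$ is attained at $t_0$; and since $\|u_0\|_1=\frac{R}{N}\|u_0\|_{TV}$, factoring gives $t_0=(N/R)^b\,\|u_0\|_{TV}^{a-b}=(N/R)^b(|\lambda_0|R^{N-1}\omega_{N-1})^{a-b}$, as claimed.

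I do not expect a genuine obstacle here: the entire analytic content---the classification of extremizers of the Gagliardo--Nirenberg quotient $E_q$, inherited from the isoperimetric inequality---has been isolated in Proposition~\ref{GN-attain}~(ii), so the present lemma is essentially a bookkeeping consequence of equality analysis. The only point requiring genuine care is verifying that equality truly propagates all the way down to the Gagliardo--Nirenberg step; this is where the hypotheses $\alpha>0$ and $u_0\neq0$ are used, guaranteeing both that the $\|u\|_q^q$ term carries a strictly positive weight and that the ratio $E_q(u_0)$ is well defined.
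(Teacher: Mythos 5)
Your proof is correct and follows essentially the same route as the paper: both run the inequality chain from Lemma \ref{D-f-g} on the maximizer $u_0$, force equality throughout to conclude $E_q(u_0)=E_q$, invoke the rigidity of Proposition \ref{GN-attain} (ii) to get $u_0=\lambda_0\chi_{B_R(x_0)}$, and then compute the norms to obtain \eqref{lambda0-condition} and the location $t_0=\|u_0\|_{TV}^a/\|u_0\|_1^b=(\frac{N}{R})^b(|\lambda_0|R^{N-1}\omega_{N-1})^{a-b}$. Your added remarks on $u_0\neq 0$ and the role of $\alpha>0$ in propagating equality are sound refinements of the same argument, not a different approach.
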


\begin{proof}
By Lemma \ref{D-f-g}, we see 
\begin{align}
&\notag\sup_{t>0}f_\alpha(t)=D_\alpha=\|u_0\|_{1}+\alpha\|u_0\|_{q}^q
\leq\|u_0\|_{1}+\alpha E_q\|u_0\|_{1}^{q-(q-1)N}\|u_0\|_{TV}^{(q-1)N}\\
&\notag=\frac{
\|u_0\|_{1}\left(\|u_0\|_{TV}^a+\|u_0\|_{1}^b\right)^{(q-1)(\frac{N}{a}-\frac{N-1}{b})}
+\alpha E_q\|u_0\|_{1}^{q-(q-1)N}\|u_0\|_{TV}^{(q-1)N}
}{
\left(\|u_0\|_{TV}^a+\|u_0\|_{1}^b\right)^{\frac{(q-1)N}{a}+\frac{N-q(N-1)}{b}}
}\\
&\label{dfg-appli}=\frac{
\left(1+\frac{\|u_0\|_{TV}^a}{\|u_0\|_{1}^b}\right)^{(q-1)(\frac{N}{a}-\frac{N-1}{b})}+\alpha E_q\left(\frac{\|u_0\|_{TV}^a}{\|u_0\|_{1}^b}\right)^{\frac{(q-1)N}{a}}
}{\left(1+\frac{\|u_0\|_{TV}^a}{\|u_0\|_{1}^b}\right)^{\frac{(q-1)N}{a}+\frac{N-q(N-1)}{b}}}
=f_\alpha\left(\frac{\|u_0\|_{TV}^a}{\|u_0\|_{1}^b}\right)\leq\sup_{t>0}f_\alpha(t), 
\end{align}
which implies that $u_0$ is a maximizer of $E_q$. By applying Proposition \ref{GN-attain} (ii), 
we can write $u_0=\lambda_0\chi_{B_R(x_0)}$ for some $\lambda_0\in\Bbb R\setminus\{0\}$, $R>0$ and $x_0\in\Bbb R^N$. 
Moreover, since $\|u_0\|_{1}=|\lambda_0|R^N\frac{\omega_{N-1}}{N}$ and $\|u_0\|_{TV}=|\lambda_0|R^{N-1}\omega_{N-1}$, 
the coefficient $\lambda_0$ satisfies
\begin{align*}
\|u_0\|_{TV}^a+\|u_0\|_{1}^b=
(|\lambda_0|R^{N-1}\omega_{N-1})^a+\left(|\lambda_0|R^N\frac{\omega_{N-1}}{N}\right)^b=1. 
\end{align*}
In addition, the relation \eqref{dfg-appli} also implies that $\sup_{t>0}f_\alpha(t)$ is attained at 
\begin{align*}
t=\frac{\|u_0\|_{TV}^a}{\|u_0\|_{1}^b}=\left(\frac{N}{R}\right)^b(|\lambda_0|R^{N-1}\omega_{N-1})^{a-b}. 
\end{align*}
The proof of Lemma \ref{D-imply-f(t)} is complete. 
\end{proof}

\begin{cor}\label{non-attain-cor}
Let $1<q\leq 1^*$, $\alpha>0$, $a>0$ and $b>0$. Then $\tilde D_\alpha$ is not attained. 
\end{cor}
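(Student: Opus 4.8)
The plan is to mimic the proof of Proposition \ref{GN-attain}(iii): I will first show that the two suprema coincide, $\tilde D_\alpha=D_\alpha$, and then argue that any $W^{1,1}$-maximizer of $\tilde D_\alpha$ would be a $BV$-maximizer of $D_\alpha$, which by Lemma \ref{D-imply-f(t)} must be the characteristic function of a ball, i.e.\ an element of $BV\setminus W^{1,1}$, contradiction.

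The inequality $\tilde D_\alpha\leq D_\alpha$ is immediate, since $W^{1,1}\subset BV$ and $\|\nabla u\|_1=\|u\|_{TV}$ for $u\in W^{1,1}$, so the admissible set defining $\tilde D_\alpha$ is contained in the one defining $D_\alpha$ with the same objective value. For the reverse inequality I fix an arbitrary $u\in BV$ with $\|u\|_{TV}^a+\|u\|_1^b=1$ and approximate it exactly as in the proof of Proposition \ref{GN-attain}(i): there is a sequence $u_n\in BV\cap C^\infty\subset W^{1,1}$ with $u_n\to u$ in $L^1$ and a.e.\ on $\Bbb R^N$, and $\|\nabla u_n\|_1=\|u_n\|_{TV}\to\|u\|_{TV}$. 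These $u_n$ need not satisfy the constraint, so I rescale them. For each $n$ the equation $c^a\|\nabla u_n\|_1^a+c^b\|u_n\|_1^b=1$ has a unique solution $c_n>0$, the left-hand side being continuous and strictly increasing from $0$ to $\infty$ as $c$ ranges over $(0,\infty)$; moreover $c_n\to1$, because at $c=1$ the left-hand side converges to $\|u\|_{TV}^a+\|u\|_1^b=1$. Then $c_nu_n$ is admissible for $\tilde D_\alpha$, whence $\tilde D_\alpha\geq c_n\|u_n\|_1+\alpha c_n^q\|u_n\|_q^q$ for every $n$. Letting $n\to\infty$ and using $c_n\to1$, the strong convergence $\|u_n\|_1\to\|u\|_1$, and Fatou's lemma $\liminf_n\|u_n\|_q^q\geq\|u\|_q^q$ (together with $c_n^q\to1$), I obtain $\tilde D_\alpha\geq\|u\|_1+\alpha\|u\|_q^q$. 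Taking the supremum over all admissible $u\in BV$ yields $\tilde D_\alpha\geq D_\alpha$, hence $\tilde D_\alpha=D_\alpha$.

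With this equality in hand, suppose for contradiction that $\tilde D_\alpha$ is attained by some $u_0\in W^{1,1}$ with $\|\nabla u_0\|_1^a+\|u_0\|_1^b=1$. Viewing $u_0\in BV$ with $\|u_0\|_{TV}^a+\|u_0\|_1^b=1$, we get $D_\alpha=\tilde D_\alpha=\|u_0\|_1+\alpha\|u_0\|_q^q$, so $u_0$ is a maximizer of $D_\alpha$. Lemma \ref{D-imply-f(t)} then forces $u_0=\lambda_0\chi_{B_R(x_0)}$ for some $\lambda_0\in\Bbb R\setminus\{0\}$, $R>0$ and $x_0\in\Bbb R^N$, which does not belong to $W^{1,1}$, a contradiction. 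Therefore $\tilde D_\alpha$ is not attained.

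The only nontrivial point is the rescaling inside the approximation; everything else is a direct transcription of Lemma \ref{D-imply-f(t)} and the fact that $\chi_{B_R(x_0)}\notin W^{1,1}$. The main obstacle is verifying $c_n\to1$ cleanly and checking that Fatou supplies the correct (lower) bound on the $L^q$-term, so that the perturbation of the constraint does not degrade the limiting value of the functional.
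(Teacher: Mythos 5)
Your proof is correct and follows essentially the same route as the paper's: assume a maximizer $u_0\in W^{1,1}$ of $\tilde D_\alpha$ exists, promote it to a maximizer of $D_\alpha$, invoke Lemma \ref{D-imply-f(t)} to conclude $u_0=\lambda_0\chi_{B_R(x_0)}\notin W^{1,1}$, contradiction. The only difference is that the paper simply asserts that $u_0$ "becomes a maximizer of $D_\alpha$" (tacitly using $\tilde D_\alpha=D_\alpha$), whereas you explicitly justify that equality by the smooth-approximation and rescaling argument modeled on Proposition \ref{GN-attain}(i) — a detail the paper leaves implicit, and which you verify correctly (monotonicity gives $c_n\to1$, and Fatou's lemma gives the right-signed bound on the $L^q$-term).
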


\begin{proof}
On the contrary, assume that $\tilde D_\alpha$ is attained by $u_0\in W^{1,1}$. 
Then recalling $\|\nabla u_0\|_{1}=\|u_0\|_{TV}$, we see that 
$u_0$ also becomes a maximizer of $D_\alpha$. Then by Lemma \ref{D-imply-f(t)}, 
we have $u_0=\lambda_0\chi_B$ with some $\lambda_0\in\Bbb R\setminus\{0\}$ 
and some ball $B\subset\Bbb R^N$, which is a contradiction to $u_0\in W^{1,1}$. 
\end{proof}

We are ready to prove Theorem \ref{thm5} :

\begin{proof}[{\rm \bf Proof of Theorem \ref{thm5}}]
First, we show $\Sigma\supset\{
\pm\mu(t_0)\chi_{B_{r(t_0)}(x_0)}\in BV\,|\, t_0\in\Pi\text{ and }x_0\in\Bbb R^N
\}$. To this end, let $t_0\in\Pi$, $x_0\in\Bbb R^N$ and $R>0$. 
Recall that $v:=\pm\chi_{B_R(x_0')}$ is a maximizer of $E_q$, 
where $x_0':=\frac{R t_0^{\frac{1}{a}}(t_0+1)^{\frac{a-b}{ab}}}{N}x_0$. 
For $\lambda>0$, define $v_\lambda(x):=K\lambda v(\lambda^{\frac{1}{N}}x)$, 
where $K=K(\lambda)>0$ is determined uniquely by 
\begin{align}\label{K-def}
\|v_\lambda\|_{TV}^a+\|v_\lambda\|_{1}^b=K^a\lambda^{\frac{a}{N}}\|v\|_{TV}^a+K^b\|v\|_{1}^b=1. 
\end{align}
Then we see 
\begin{align}\label{K-trans}
\frac{\|v_\lambda\|_{TV}^a}{\|v_\lambda\|_{1}^b}=K^{a-b}\lambda^{\frac{a}{N}}\frac{\|v\|_{TV}^a}{\|v\|_{1}^b}
=\frac{1}{K^b\|v\|_{1}^b}-1. 
\end{align}
Note that the relation \eqref{K-def} shows $\lim_{\lambda\downarrow 0}K=\frac{1}{\|v\|_{1}}$, $\lim_{\lambda\to\infty}K=0$ and 
\begin{align*}
K'=-\frac{
\frac{a}{N}K^a\lambda^{\frac{a}{N}-1}\|v\|_{TV}^a
}{
aK^{a-1}\lambda^{\frac{a}{N}}\|v\|_{TV}^a+bK^{b-1}\|v\|_{1}^b
}<0. 
\end{align*}
Hence, the relation \eqref{K-trans} implies that there exists $\lambda_0>0$ uniquely such that 
\begin{align}\label{lambda0t0}
\frac{\|v_{\lambda_0}\|_{TV}^a}{\|v_{\lambda_0}\|_{1}^b}=\frac{1}{K(\lambda_0)^b\|v\|_{1}^b}-1=t_0. 
\end{align}
Combining \eqref{K-def} with \eqref{lambda0t0}, we can compute 
\begin{align}\label{value-flam0}
&\lambda_0=\left(\frac{\|v\|_{1}}{\|v\|_{TV}}
t_0^{\frac{1}{a}}(t_0+1)^{\frac{a-b}{ab}}
\right)^N=\left(\frac{R}{N}t_0^{\frac{1}{a}}(t_0+1)^{\frac{a-b}{ab}}\right)^N,  
\end{align}
where we have used $\|v\|_{1}=R^N\frac{\omega_{N-1}}{N}$ and $\|v\|_{TV}=R^{N-1}\omega_{N-1}$. 
By Lemma \ref{D-f-g}, we see 
\begin{align*}
&\sup_{t>0}f_\alpha(t)=D_\alpha\geq\|v_{\lambda_0}\|_{1}+\alpha\|v_{\lambda_0}\|_{q}^q\\
&=\frac{
\|v_{\lambda_0}\|_{1}\left(\|v_{\lambda_0}\|_{TV}^a+\|v_{\lambda_0}\|_{1}^b\right)^{(q-1)(\frac{N}{a}-\frac{N-1}{b})}+\alpha E_q\|v_{\lambda_0}\|_{1}^{q-(q-1)N}\|v_{\lambda_0}\|_{TV}^{(q-1)N}
}{
\left(\|v_{\lambda_0}\|_{TV}^a+\|v_{\lambda_0}\|_{1}^b\right)^{\frac{(q-1)N}{a}+\frac{N-q(N-1)}{b}}
}\\
&=\frac{
\left(1+\frac{\|v_{\lambda_0}\|_{TV}^a}{\|v_{\lambda_0}\|_{1}^b}\right)^{(q-1)(\frac{N}{a}-\frac{N-1}{b})}+\alpha E_q\left(\frac{\|v_{\lambda_0}\|_{TV}^a}{\|v_{\lambda_0}\|_{1}^b}\right)^{\frac{(q-1)N}{a}}
}{
\left(
1+\frac{\|v_{\lambda_0}\|_{TV}^a}{\|v_{\lambda_0}\|_{1}^b}
\right)^{\frac{(q-1)N}{a}+\frac{N-q(N-1)}{b}}
}=f_\alpha\left(\frac{\|v_{\lambda_0}\|_{TV}^a}{\|v_{\lambda_0}\|_{1}^b}\right)\\
&=f_\alpha\left(\frac{1}{K(\lambda_0)^b\|v\|_{1}^b}-1\right)=f_\alpha(t_0)=\sup_{t>0}f_\alpha(t), 
\end{align*}
which implies that $v_{\lambda_0}$ is a maximizer of $D_\alpha$. Moreover, by \eqref{lambda0t0} and \eqref{value-flam0}, we can compute 
$v_{\lambda_0}=\pm\mu(t_0)\chi_{B_{r(t_0)}(x_0)}$. 

\medskip

Next, we show $\Sigma\subset\{
\pm\mu(t_0)\chi_{B_{r(t_0)}(x_0)}\in BV\,|\, t_0\in\Pi\text{ and }x_0\in\Bbb R^N
\}$. To this end, let $u_0\in BV$ be a maximizer of $D_\alpha$. 
Then by Lemma \ref{D-imply-f(t)}, we can write $u_0=\lambda_0\chi_{B_R(x_0)}$ with some $R>0$, $x_0\in\Bbb R^N$ and $\lambda_0\in\Bbb R\setminus\{0\}$, 
where $\lambda_0$ satisfies \eqref{lambda0-condition}. 
We take $t_0>0$ uniquely determined by the equation $R=r(t_0)$ 
and put $\nu:=\mu(t_0)$. 
Then we observe that $R$ and $\nu$ satisfy $(\nu R^{N-1}\omega_{N-1})^a+(\nu R^N\frac{\omega_{N-1}}{N})^b=1$, 
which implies $\nu=|\lambda_0|$ since $|\lambda_0|$ satisfies the same equation by \eqref{lambda0-condition}. 
Therefore, in order to complete the proof, it suffices to prove $t_0\in\Pi$. 
Noting that $u_0$ is a maximizer both of $D_\alpha$ and $E_q$ together with 
Lemma \ref{D-f-g}, we see
\begin{align*}
&\sup_{t>0}f_\alpha(t)=D_\alpha=\|u_0\|_{1}+\alpha\|u_0\|_{q}^q
=\|u_0\|_{1}+\alpha E_q\|u_0\|_{1}^{q-(q-1)N}\|u_0\|_{TV}^{(q-1)N}\\
&=\frac{
\|u_0\|_{1}\left(\|u_0\|_{TV}^a+\|u_0\|_{1}^b\right)^{(q-1)(\frac{N}{a}-\frac{N-1}{b})}
+\alpha E_q\|u_0\|_{1}^{q-(q-1)N}\|u_0\|_{TV}^{(q-1)N}
}{
\left(\|u_0\|_{TV}^a+\|u_0\|_{1}^b\right)^{\frac{(q-1)N}{a}+\frac{N-q(N-1)}{b}}
}\\
&=\frac{
\left(1+\frac{\|u_0\|_{TV}^a}{\|u_0\|_{1}^b}\right)^{(q-1)(\frac{N}{a}-\frac{N-1}{b})}
+\alpha E_q\left(\frac{\|u_0\|_{TV}^a}{\|u_0\|_{1}^b}\right)^{\frac{(q-1)N}{a}}
}{
\left(1+\frac{\|u_0\|_{TV}^a}{\|u_0\|_{1}^b}\right)^{\frac{(q-1)N}{a}+\frac{N-q(N-1)}{b}}
}=f_\alpha\left(\frac{\|u_0\|_{TV}^a}{\|u_0\|_{1}^b}\right)=f_\alpha(t_0), 
\end{align*}
where we have used $\frac{\|u_0\|_{TV}^a}{\|u_0\|_{1}^b}=t_0$. 
Hence, it follows $\sup_{t>0}f_\alpha(t)=f_\alpha(t_0)$, which means $t_0\in\Pi$. 
The proof of Theorem \ref{thm5} is complete. 
\end{proof}

\section{Proof of Theorems \ref{thm1}-\ref{thm2}}
\noindent

In this section, we shall prove Theorems \ref{thm1}-\ref{thm2}. 
We start from the following lemma : 

\begin{lem}\label{routine-attain-lem}
Let $1<q<1^*$, $a>0$ and $b>0$. 

\medskip

\noindent
{\rm(i)} Let $\alpha>\alpha_v$. Then $D_\alpha$ is attained. 

\medskip

\noindent
{\rm(ii)} Assume $\alpha_v>0$ and let $0<\alpha<\alpha_v$. Then $D_\alpha$ is not attained. 
\end{lem}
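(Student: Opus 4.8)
The plan is to avoid a direct concentration–compactness argument and instead reduce everything to the one–dimensional function $f_\alpha$. By Lemma \ref{D-f-g} we have $D_\alpha=\sup_{t>0}f_\alpha(t)$ and $\alpha_v=\frac{1}{E_q}\inf_{t>0}g(t)$, while Theorem \ref{thm5} (together with Lemma \ref{D-imply-f(t)}) shows that $D_\alpha$ is attained if and only if the supremum of the continuous function $f_\alpha$ on $(0,\infty)$ is attained at some finite $t_0\in(0,\infty)$. So the task is purely to decide whether $f_\alpha$ achieves its supremum, with $\alpha_v$ acting as the dividing value.

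The algebraic key is a relation among the three exponents appearing in $f_\alpha$. Writing $A=(q-1)(\frac{N}{a}-\frac{N-1}{b})$, $B=\frac{(q-1)N}{a}$ and $C=\frac{(q-1)N}{a}+\frac{N-q(N-1)}{b}$, a short computation gives $C-A=\frac{1}{b}$. Hence $(1+t)^C=(1+t)^A(1+t)^{1/b}$, so that $f_\alpha(t)<1$ is equivalent to $\alpha E_q\,t^{B}<(1+t)^A\bigl((1+t)^{1/b}-1\bigr)$, i.e. to $\alpha E_q<g(t)$. In the same way $f_\alpha(t)>1\iff\alpha E_q>g(t)$; taking the infimum over $t>0$ recovers $\alpha>\alpha_v\iff D_\alpha>1$.

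Next I would pin down the boundary behaviour of $f_\alpha$. As $t\downarrow0$ the numerator and denominator both tend to $1$ (the term $\alpha E_q t^{B}$ vanishes since $B>0$), giving $\lim_{t\downarrow0}f_\alpha(t)=1$. As $t\to\infty$, the comparison $B>A$ shows that $\alpha E_q t^{B}$ dominates the numerator, so $f_\alpha(t)\sim\alpha E_q\,t^{-(N-q(N-1))/b}$. This is the only place the subcritical hypothesis is used: for $1<q<1^*$ one has $N-q(N-1)>0$, hence $\lim_{t\to\infty}f_\alpha(t)=0$.

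With these facts the two cases are immediate. For (i), $\alpha>\alpha_v$ yields $\alpha E_q>\inf_{t>0}g$, so $f_\alpha(t^*)>1$ for some $t^*$; since $f_\alpha$ is continuous and tends to $1$ and to $0$ at the two ends, any maximizing sequence for $f_\alpha$ remains in a compact subinterval of $(0,\infty)$, the supremum is attained at an interior $t_0$, and Theorem \ref{thm5} converts $t_0$ into a maximizer of $D_\alpha$. For (ii), $0<\alpha<\alpha_v$ gives $\alpha E_q<g(t)$ for every $t$, hence $f_\alpha(t)<1=\lim_{t\downarrow0}f_\alpha$ for all $t>0$; thus $\sup_{t>0}f_\alpha=1$ is approached only as $t\downarrow0$ and is never attained, so by Lemma \ref{D-imply-f(t)} $D_\alpha$ is not attained. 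I expect the one genuinely delicate point to be the behaviour as $t\to\infty$: correctly identifying the dominant term in the numerator and the sign of $N-q(N-1)$ is what forces the maximizing interval to be compact, and it is exactly here that $q<1^*$ is essential.
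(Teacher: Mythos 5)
Your proposal is correct and follows essentially the same route as the paper: reduce via Lemma \ref{D-f-g} and Theorem \ref{thm5} to the attainability of $\sup_{t>0}f_\alpha(t)$, use the equivalence $f_\alpha(t)\gtrless 1\iff\alpha\gtrless\frac{1}{E_q}g(t)$, and exploit the limits $\lim_{t\downarrow0}f_\alpha(t)=1$ and $\lim_{t\to\infty}f_\alpha(t)=0$ (the latter being exactly where $q<1^*$ enters, as you note). Your write-up merely makes explicit the exponent identity $C-A=\frac{1}{b}$ and the asymptotics that the paper leaves as routine computations.
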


\begin{proof}
By Theorem \ref{thm5}, we see that $D_\alpha$ is attained if and only if $\sup_{t>0}f_\alpha(t)$ 
is attained. 

\medskip

\noindent
(i) Let $\alpha>\alpha_v$. Note that the condition $q<1^*$ shows $\lim_{t\to\infty}f_\alpha(t)=0$. By the assumption $\alpha>\alpha_v$ and Lemma \ref{D-f-g}, 
there exists $t_0>0$ such that $\alpha>\frac{1}{E_q}g(t_0)$, which implies $f_\alpha(t_0)>1=\lim_{t\downarrow 0}f_\alpha(t)$. Hence, $\sup_{t>0}f_\alpha(t)$ is attained. 

\medskip

\noindent
(ii) Assume $\alpha_v>0$ and let $0<\alpha<\alpha_v$. 
By contradiction, assume that there exists $t_0>0$ such that 
$\sup_{t>0}f_\alpha(t)=f_\alpha(t_0)$. First, note $\sup_{t>0}f_\alpha(t)\geq\lim_{t\downarrow 0}f_\alpha(t)=1$. By the assumption $\alpha<\alpha_v$ and Lemma \ref{D-f-g}, we obtain 
$\alpha<\alpha_v\leq\frac{1}{E_q}g(t_0)$, which implies $f_\alpha(t_0)<1$. 
Then we see $1\leq\sup_{t>0}f_\alpha(t)=f_\alpha(t_0)<1$, which is a contradiction. 
Thus $\sup_{t>0}f_\alpha(t)$ is not attained.
\end{proof}

\begin{lem}\label{sub-pro1}
Let $1<q<1^*$, $a>0$ and $b>0$. 

\medskip

\noindent
{\rm(i)} Let $a>N(q-1)$. Then there holds $\alpha_v=0$, 
and $D_\alpha$ is attained for $\alpha>0$. 

\medskip

\noindent
{\rm(ii)} Let $a<N(q-1)$. Then there holds $\alpha_v>0$, 
and $D_\alpha$ is attained for $\alpha\geq\alpha_v$, 
while $D_\alpha$ is not attained for $0<\alpha<\alpha_v$. 
\end{lem}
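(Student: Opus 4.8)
The plan is to reduce the entire statement to the one-variable analysis already prepared in Lemma \ref{D-f-g} and Lemma \ref{routine-attain-lem}. Recall from Lemma \ref{D-f-g} that $\alpha_v=\frac{1}{E_q}\inf_{t>0}g(t)$ and that, via Theorem \ref{thm5}, $D_\alpha$ is attained if and only if $\sup_{t>0}f_\alpha(t)$ is attained. The starting observation I would record is the elementary identity linking the two functions: since the exponent in the denominator of $f_\alpha$ exceeds the exponent of the first term in its numerator by exactly $\frac{1}{b}$, one obtains the equivalences $f_\alpha(t)>1\Leftrightarrow\alpha E_q>g(t)$ and $f_\alpha(t)=1\Leftrightarrow\alpha E_q=g(t)$ for every $t>0$. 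Everything then hinges on the boundary behaviour of $g$.

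Next I would compute the two asymptotics of $g$. Near $t=0$, the expansion $(1+t)^{\frac{1}{b}}-1\sim\frac{t}{b}$ gives $g(t)\sim\frac{1}{b}\,t^{\,1-\frac{(q-1)N}{a}}$, so the sign of $1-\frac{(q-1)N}{a}$, equivalently the comparison of $a$ with $N(q-1)$, decides whether $g(t)\to0$ or $g(t)\to\infty$ as $t\downarrow0$. As $t\to\infty$, collecting the leading powers shows $g(t)\sim c\,t^{\frac{1-(q-1)(N-1)}{b}}$ for a positive constant $c$; here the hypothesis $q<1^*$ forces $(q-1)(N-1)<1$, hence the exponent is positive and $g(t)\to\infty$. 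Thus $g\to\infty$ at $t=\infty$ in all cases, and the behaviour at $t=0$ is the only feature sensitive to the dichotomy $a\gtrless N(q-1)$.

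For part (i), $a>N(q-1)$ makes $g(t)\to0$ as $t\downarrow0$, while $g>0$ on $(0,\infty)$; hence $\inf_{t>0}g(t)=0$, giving $\alpha_v=0$. Since then every $\alpha>0$ satisfies $\alpha>\alpha_v$, Lemma \ref{routine-attain-lem} (i) yields attainment of $D_\alpha$ for all $\alpha>0$. For part (ii), $a<N(q-1)$ forces $g(t)\to\infty$ both as $t\downarrow0$ and as $t\to\infty$; being continuous and strictly positive, $g$ attains its infimum at some interior $t_0>0$, so $\inf_{t>0}g(t)=g(t_0)>0$ and $\alpha_v>0$. Attainment for $\alpha>\alpha_v$ and non-attainment for $0<\alpha<\alpha_v$ are then exactly Lemma \ref{routine-attain-lem} (i)--(ii).

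The remaining, and most delicate, point is the threshold $\alpha=\alpha_v$ in part (ii). Here $\alpha_v E_q=\inf_{t>0}g(t)=g(t_0)$, so the equivalences from the first paragraph give $f_{\alpha_v}(t)\le1$ for every $t>0$, with equality at $t=t_0$. Combined with $\lim_{t\downarrow0}f_{\alpha_v}(t)=1$ and $\lim_{t\to\infty}f_{\alpha_v}(t)=0$ (again using $q<1^*$), this shows $\sup_{t>0}f_{\alpha_v}(t)=1$ and, crucially, that the supremum is genuinely attained at the interior point $t_0$, not merely approached as $t\downarrow0$. By Theorem \ref{thm5} this produces a maximizer of $D_{\alpha_v}$ of the form $\pm\mu(t_0)\chi_{B_{r(t_0)}(x_0)}$. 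The main obstacle I anticipate is precisely this last step: one must verify that the supremum of $f_{\alpha_v}$ is realized at the interior minimizer of $g$ rather than lost to the boundary value $1$ at $t=0$, which is exactly where the equivalence $f_\alpha(t)=1\Leftrightarrow\alpha E_q=g(t)$ together with the strict positivity of $g(t_0)$ and $g(t)\ge g(t_0)$ for all $t$ are needed.
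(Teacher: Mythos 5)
Your proof is correct and follows essentially the same route as the paper: both reduce the statement to the one-variable functions $f_\alpha$ and $g$ via Lemma \ref{D-f-g} and Theorem \ref{thm5}, read off $\alpha_v$ from the boundary behaviour of $g$ (namely $g(t)\to 0$ as $t\downarrow 0$ when $a>N(q-1)$, and $g\to\infty$ at both ends when $a<N(q-1)$), invoke Lemma \ref{routine-attain-lem} for $\alpha\neq\alpha_v$, and settle the threshold case by noting $f_{\alpha_v}(t_0)=1$ at the interior minimizer $t_0$ of $g$ together with $\lim_{t\downarrow 0}f_{\alpha_v}(t)=1$ and $\lim_{t\to\infty}f_{\alpha_v}(t)=0$. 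The only (harmless) difference is that you make the equivalence $f_\alpha(t)\gtrless 1\Leftrightarrow \alpha E_q\gtrless g(t)$ explicit to conclude $f_{\alpha_v}\le 1$ everywhere, where the paper relies on the same compactness reasoning implicitly.
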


\begin{proof}
(i) Let $a>N(q-1)$. In this case, since $\lim_{t\downarrow 0}g(t)=0$ and $g(t)>0$ for $t>0$, 
by Lemma \ref{D-f-g}, we obtain $\alpha_v=\frac{1}{E_q}\inf_{t>0}g(t)=\frac{1}{E_q}\lim_{t\downarrow 0}g(t)=0$, 
and then Lemma \ref{routine-attain-lem} (i) implies that $D_\alpha$ is attained for $\alpha>0$. 

\medskip

\noindent
(ii) Let $a<N(q-1)$. The conditions $a<N(q-1)$ and $q<1^*$ imply $\lim_{t\downarrow 0}g(t)=\lim_{t\to\infty}g(t)=\infty$. 
Since $g(t)>0$ for $t>0$, there exists $t_0>0$ such that $\inf_{t>0}g(t)=g(t_0)>0$, 
and then Lemma \ref{D-f-g} shows $\alpha_v=\frac{1}{E_q}\inf_{t>0}g(t)=\frac{1}{E_q}g(t_0)>0$. 
By Lemma \ref{routine-attain-lem}, it remains to prove $D_{\alpha_v}$ is attained, 
which is equivalent to $\sup_{t>0}f_{\alpha_v}(t)$ is attained. 
Note that $\alpha_v=\frac{1}{E_q}g(t_0)$ implies $f_{\alpha_v}(t_0)=1$. Recalling $\lim_{t\downarrow 0}f_{\alpha_v}(t)=1$ 
and $\lim_{t\to\infty}f_{\alpha_v}(t)=0$, we can conclude that $\sup_{t>0}f_{\alpha_v}(t)$ is attained. 
\end{proof}

\begin{lem}\label{attain-n(q-1)}
Let $1<q<1^*$, $a=N(q-1)$ and $b>0$.

\medskip

\noindent
{\rm(i)} Let $\frac{2N-1}{2(N-1)}<q<1^*$ and $b\geq b_0:=(q-1)(N-1)-\left(N-(N-1)q\right)>0$. 
Then there holds $\alpha_v=\frac{1}{b E_q}$, and $D_\alpha$ is attained for $\alpha>\alpha_v$, 
while $D_\alpha$ is not attained for $0<\alpha\leq\alpha_v$. 

\medskip

\noindent
{\rm(ii)} Let $\frac{2N-1}{2(N-1)}<q<1^*$ and $b<b_0$. 
Then there holds $0<\alpha_v<\frac{1}{b E_q}$, and $D_\alpha$ is attained for $\alpha\geq\alpha_v$, 
while $D_\alpha$ is not attained for $0<\alpha<\alpha_v$. 

\medskip

\noindent
{\rm(iii)} Let $1<q\leq\frac{2N-1}{2(N-1)}$. Then there holds $\alpha_v=\frac{1}{bE_q}$, 
and $D_\alpha$ is attained for $\alpha>\alpha_v$, 
while $D_\alpha$ is not attained for $0<\alpha\leq\alpha_v$. 
\end{lem}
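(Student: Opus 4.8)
The plan is to reduce everything, via Lemma \ref{D-f-g} and Theorem \ref{thm5}, to a one–variable analysis of the profile functions $f_\alpha$ and $g$ under the standing assumption $a=N(q-1)$, for which $\frac{(q-1)N}{a}=1$. Writing $s:=\frac1b$ and $\beta:=1-\frac{(q-1)(N-1)}{b}$, one has $(q-1)\bigl(\frac Na-\frac{N-1}{b}\bigr)=\beta$, so that $g(t)=\frac{((1+t)^{s}-1)(1+t)^{\beta}}{t}$, and the growth exponent at infinity is $s+\beta-1=\frac{1-(q-1)(N-1)}{b}$, which is strictly positive since $q<1^*$ forces $(q-1)(N-1)<1$; in particular $s+\beta>1$. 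First I would record the boundary behaviour $\lim_{t\downarrow0}g(t)=\frac1b$, $\lim_{t\to\infty}g(t)=\infty$, and correspondingly $\lim_{t\downarrow0}f_\alpha(t)=1$, $\lim_{t\to\infty}f_\alpha(t)=0$, together with the elementary equivalence $f_\alpha(t)\gtrless1\Leftrightarrow\alpha\gtrless\frac1{E_q}g(t)$ obtained by clearing denominators in $f_\alpha(t)=1$. By Lemma \ref{routine-attain-lem} this already settles attainment for $\alpha\neq\alpha_v$, so the whole lemma collapses to determining $\inf_{t>0}g(t)$ and whether it is attained.

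The heart of the argument is the function $\phi(t):=t\bigl(g(t)-\frac1b\bigr)=((1+t)^{s}-1)(1+t)^{\beta}-st$, whose sign equals that of $g(t)-\frac1b$ on $(0,\infty)$ and which satisfies $\phi(0)=\phi'(0)=0$. I would compute $\phi'(t)=(s+\beta)(1+t)^{s+\beta-1}-\beta(1+t)^{\beta-1}-s$ and, after the substitution $u=1+t$, write $\phi'=\Psi(u)$ with $\Psi(1)=0$ and $\Psi'(u)=u^{\beta-2}\bigl[(s+\beta)(s+\beta-1)u^{s}-\beta(\beta-1)\bigr]$. The key structural observation is that the bracketed factor is \emph{strictly increasing} in $u$ on $[1,\infty)$ — its coefficient $(s+\beta)(s+\beta-1)$ is positive because $s+\beta>1$, and $u^{s}$ is increasing — hence $\Psi'$ vanishes at most once and changes sign only from $-$ to $+$. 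Consequently $\phi'$, and therefore $g-\frac1b$, has at most one interior sign change, governed entirely by $\Psi'(1)=\phi''(0)$.

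A short computation using the factorization $(s+\beta)(s+\beta-1)-\beta(\beta-1)=s(s+2\beta-1)$ gives $\phi''(0)=\Psi'(1)=\frac{s\,(b-b_0)}{b}$, once one verifies the algebraic identity $b_0=(q-1)(N-1)-(N-(N-1)q)=2(q-1)(N-1)-1$; note also $b_0>0\Leftrightarrow q>\frac{2N-1}{2(N-1)}$, which is exactly how the three cases are delimited. I would then conclude as follows. If $b\ge b_0$ (cases (i) and (iii), the latter because $b_0\le0<b$ there), then $\Psi'(1)\ge0$, so by monotonicity of the bracket $\Psi>0$ on $(1,\infty)$, whence $\phi$ is strictly increasing and $g(t)>\frac1b$ for all $t>0$; thus $\inf_{t>0}g=\frac1b$ is not attained, giving $\alpha_v=\frac1{bE_q}$, and $f_{\alpha_v}(t)<1$ for every $t>0$ while $\lim_{t\downarrow0}f_{\alpha_v}=1$ shows $D_{\alpha_v}$ is not attained. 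If instead $b<b_0$ (case (ii)), then $\Psi'(1)<0$, so $\phi$ strictly decreases below $0$ before increasing to $+\infty$; hence $g$ dips below $\frac1b$ and, being continuous with $g(0^+)=\frac1b$ and $g(\infty)=\infty$, attains an interior minimum with value in $(0,\frac1b)$. This gives $0<\alpha_v<\frac1{bE_q}$, and at the minimizer $t_0$ one has $f_{\alpha_v}(t_0)=1=\sup_{t>0}f_{\alpha_v}$, so $D_{\alpha_v}$ is attained; combined with Lemma \ref{routine-attain-lem} this yields all three statements. The main obstacle is establishing the ``at most one sign change'' of $\phi'$ \emph{globally} rather than merely near $t=0$; the rest is the Taylor sign computation at $t=0$ and routine endpoint bookkeeping. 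The borderline $b=b_0$ needs only the extra remark that $\Psi'(1)=0$ still forces $\Psi>0$ on $(1,\infty)$, since the bracket is strictly increasing, so it falls into the non-attained case (i).
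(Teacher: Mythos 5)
Your proposal is correct and takes essentially the same approach as the paper: your $\phi(t)=t\bigl(g(t)-\tfrac{1}{b}\bigr)$ coincides exactly with the paper's auxiliary function $\phi$, your bracketed factor in $\Psi'$ is the paper's $\varphi$, and the case split on the sign of $\phi''(0)$ (equivalently $b\gtrless b_0$) followed by the $f_{\alpha_v}$ attainability endgame is precisely the paper's argument. The substitutions $s=\tfrac{1}{b}$, $\beta=1-\tfrac{(q-1)(N-1)}{b}$, $u=1+t$ are purely cosmetic.
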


\begin{proof}
First, note $\lim_{t\downarrow 0}g(t)=\frac{1}{b}$, and then $\alpha_v=\frac{1}{E_q}\inf_{t>0}g(t)\leq\frac{1}{bE_q}$. 

\medskip

\noindent
(i) Let $\frac{2N-1}{2(N-1)}<q<1^*$ and $b\geq b_0$. Define the function $\phi(t)$ for $t>0$ by 
\begin{align*}
\phi(t):=(1+t)^{1+\frac{N-q(N-1)}{b}}-(1+t)^{1-\frac{(q-1)(N-1)}{b}}-\frac{t}{b}. 
\end{align*}
We can compute 
\begin{align*}
&\phi'(t)=\left(1+\frac{N-q(N-1)}{b}\right)(1+t)^{\frac{N-q(N-1)}{b}}
-\left(1-\frac{(q-1)(N-1)}{b}\right)(1+t)^{-\frac{(q-1)(N-1)}{b}}-\frac{1}{b}
\end{align*}
and $\phi''(t)=(1+t)^{-1-\frac{(q-1)(N-1)}{b}}\varphi(t)$, where 
\begin{align*}
\varphi(t):=\frac{N-q(N-1)}{b}\left(1+\frac{N-q(N-1)}{b}\right)(1+t)^{\frac{1}{b}}+\frac{(q-1)(N-1)}{b}\left(1-\frac{(q-1)(N-1)}{b}\right). 
\end{align*}
We see that the condition $b\geq b_0$ implies $\varphi(t)>\varphi(0)\geq 0$ for $t>0$. 
Hence, $\phi'(t)>\phi'(0)=0$ for $t>0$, and then $\phi(t)>\phi(0)=0$ for $t>0$, 
which is equivalent to $g(t)>\frac{1}{b}$ for $t>0$. Therefore, there holds $\alpha_v=\frac{1}{E_q}\inf_{t>0}g(t)\geq\frac{1}{bE_q}$, 
and it follows $\alpha_v=\frac{1}{bE_q}$. It remains to prove that $D_{\alpha_v}$ is not attained. 
Indeed, since $g(t)>\frac{1}{b}$ for $t>0$ is equivalent to $f_{\alpha_v}(t)<1$ for $t>0$, 
we know that $D_{\alpha_v}=\sup_{t>0}f_{\alpha_v}(t)=1$ is not attained. 

\medskip

\noindent
(ii) Let $\frac{2N-1}{2(N-1)}<q<1^*$ and $b<b_0$. In this case, since the condition $b<b_0$ implies $\varphi(0)<0$, 
there exists a unique $t_0>0$ such that $\phi''(t_0)=0$, $\phi''(t)<0$ for $t\in(0,t_0)$ and $\phi''(t)>0$ for $t\in(t_0,\infty)$. 
Then by noting $\phi'(0)=0$ and $\lim_{t\to\infty}\phi'(t)=\infty$, we see that 
there exists a unique $t_1\in(t_0,\infty)$ such that $\phi'(t_1)=0$, 
$\phi'(t)<0$ for $t\in(0,t_1)$ and $\phi'(t)>0$ for $t\in(t_1,\infty)$. 
Similarly, the facts $\phi(0)=0$ and $\lim_{t\to\infty}\phi(t)=\infty$ imply that 
there exists a unique $t_2\in(t_1,\infty)$ such that $\phi(t_2)=0$, 
$\phi(t)<0$ for $t\in(0,t_2)$ and $\phi(t)>0$ for $t\in(t_2,\infty)$. 
Note that $\phi(t)<0$ for $t\in(0,t_2)$ is equivalent to $g(t)<\frac{1}{b}$ for $t\in(0,t_2)$, 
which implies $\alpha_v=\frac{1}{E_q}\inf_{t>0}g(t)<\frac{1}{bE_q}$. 
Moreover, since $\lim_{t\downarrow 0}g(t)=\frac{1}{b}$, $\lim_{t\to\infty}g(t)=\infty$ and $g(t)>0$ for $t>0$, 
we obtain $\alpha_v=\frac{1}{E_q}\inf_{t>0}g(t)>0$. It remains to check that $D_{\alpha_v}$ is attained. 
The signs of $\phi$ from the above observations give $g(t_2)=\frac{1}{b}$, $g(t)<\frac{1}{b}$ for $t\in(0,t_2)$ and $g(t)>\frac{1}{b}$ for $t\in(t_2,\infty)$. 
These facts together with $\lim_{t\downarrow 0}g(t)=\frac{1}{b}$ yield that there exists $t_3\in(0,t_2)$ satisfying $\inf_{t>0}g(t)=g(t_3)$, 
and hence, $\alpha_v=\frac{1}{E_q}\inf_{t>0}g(t)=\frac{g(t_3)}{E_q}$. 
Therefore, we obtain $\alpha_vE_q=g(t_3)$ which is equivalent to $f_{\alpha_v}(t_3)=1=\lim_{t\downarrow 0}f_{\alpha_v}(t)$. 
Recalling $\lim_{t\to\infty}f_{\alpha_v}(t)=0$, we see that $D_{\alpha_v}=\sup_{t>0}f_{\alpha_v}(t)$ is attained. 

\medskip

\noindent
(iii) Let $1<q\leq\frac{2N-1}{2(N-1)}$. In this case, we see $\phi''(t)>0$ for $t>0$, 
and hence, in the same way as in the case (i), we get the desired result. 
\end{proof}

\begin{prop}\label{sub-pro2-est}
Let $1<q<1^*$, $a>0$ and $b>0$.

\medskip

\noindent
{\rm(i)} Let $\frac{2N-1}{2(N-1)}<q<1^*$ and $a=N(q-1)$. Then there hold 
$\lim_{b\downarrow 0}\alpha_v=\infty$ and $\lim_{b\uparrow b_0}\alpha_v=\frac{1}{b_0E_q}$. 

\medskip

\noindent
{\rm(ii)} Let $a<N(q-1)$. Then there hold $\lim_{b\downarrow 0}\alpha_v=\infty$ and $\lim_{b\to\infty}\alpha_v=0$.

\medskip

\noindent
{\rm(iii)} There holds $\lim_{a\downarrow 0}\alpha_v=\infty$. 

\medskip

\noindent
{\rm(iv)} Let $\frac{2N-1}{2(N-1)}<q<1^*$ and $b\geq b_0$, or let $1<q\leq\frac{2N-1}{2(N-1)}$. 
Then there holds $\lim_{a\uparrow N(q-1)}\alpha_v=\frac{1}{bE_q}$. 
\end{prop}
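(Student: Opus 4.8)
The plan is to reduce every assertion to the one–variable infimum furnished by Lemma \ref{D-f-g}, namely $\alpha_v=\frac{1}{E_q}\inf_{t>0}g(t)$, and to track how this infimum behaves as $a$ or $b$ tends to the relevant endpoint. First I would set $\beta:=\frac{(q-1)N}{a}$ and $c:=(q-1)(N-1)$, so that $c\in(0,1)$ by $q<1^*$ and $\beta\ge 1$ exactly when $a\le N(q-1)$, and perform the substitution $s:=(1+t)^{1/b}$, which maps $(0,\infty)$ bijectively onto $(1,\infty)$ and puts $g$ into the normal form
\[
\alpha_v=\frac{1}{E_q}\inf_{s>1}G(s),\qquad G(s):=\frac{(s-1)\,s^{b\beta-c}}{(s^b-1)^\beta}.
\]
This form isolates the $b$–dependence in $(s^b-1)^\beta$, and since $\partial_b\log G=-\frac{\beta\log s}{s^b-1}<0$ and $\partial_\beta\log G=\log\frac{s^b}{s^b-1}>0$ for $s>1$, the quantity $G$ is strictly decreasing in $b$ and strictly increasing in $\beta$ (hence decreasing in $a$). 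I would also record that $\lim_{t\downarrow 0}g(t)=\frac1b$ precisely when $a=N(q-1)$, i.e.\ $\beta=1$, since this pins the two finite limits.

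For the divergent limits $\lim_{b\downarrow 0}\alpha_v=\infty$ in (i)--(ii) and $\lim_{a\downarrow 0}\alpha_v=\infty$ in (iii), the issue is that pointwise divergence of $g$ is not enough, because the minimizer may escape; I therefore need \emph{uniform} lower bounds that blow up. For $b\downarrow 0$ (so $\beta\ge 1$) I would use $s^b-1\le b\,s^b\log s$ (from $e^x-1\le xe^x$) to get, for every $s>1$,
\[
G(s)\ge\frac{1}{b^\beta}\,\psi(s),\qquad \psi(s):=\frac{s-1}{s^c(\log s)^\beta},
\]
and check that $\psi$ has a strictly positive infimum (it tends to a positive limit or to $+\infty$ at both ends because $c<1$ and $\beta\ge 1$); since $b^{-\beta}\to\infty$, this forces $\inf_{s>1}G\to\infty$. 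For $a\downarrow 0$ I would instead work in the factored form $g(t)=h_b(t)\,(1+\tfrac1t)^{(q-1)N/a}$ with $h_b(t):=[(1+t)^{1/b}-1](1+t)^{-(q-1)(N-1)/b}$, apply Bernoulli's inequality $(1+\tfrac1t)^{(q-1)N/a}\ge 1+\frac{(q-1)N}{a}\frac1t$ (valid once $(q-1)N/a\ge 1$), and split at a large radius $T$: on $(0,T]$ the bound $g\ge\frac{(q-1)N}{a}\frac{h_b(t)}{t}$ together with $\inf_{(0,T]}\frac{h_b(t)}{t}>0$ (its value at $t=0$ is $\frac1b$) drives $g\to\infty$ as $a\downarrow 0$, while on $[T,\infty)$ one uses $g\ge h_b(t)\to\infty$.

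The vanishing limit $\lim_{b\to\infty}\alpha_v=0$ in (ii) is easy: fixing any single $t>0$ and letting $b\to\infty$ gives $g(t)\to 0$, whence $0\le\alpha_v\le\frac{1}{E_q}g(t)\to 0$. The two finite limits are then obtained by squeezing. For $\lim_{b\uparrow b_0}\alpha_v=\frac{1}{b_0E_q}$ in (i) I would combine the monotonicity in $b$ with the fact, established in the proof of Lemma \ref{attain-n(q-1)}(i), that $G_{b_0}(s)\ge\frac{1}{b_0}$ for all $s>1$ (here $a=N(q-1)$, $\beta=1$): strict monotonicity yields $G_b(s)>G_{b_0}(s)\ge\frac{1}{b_0}$ for $b<b_0$, so $\inf_s G_b\ge\frac{1}{b_0}$, while $\inf_s G_b\le\lim_{s\downarrow 1}G_b=\frac1b$; letting $b\uparrow b_0$ squeezes the infimum to $\frac{1}{b_0}$. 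For $\lim_{a\uparrow N(q-1)}\alpha_v=\frac{1}{bE_q}$ in (iv) I would use the factorization $g_{a,b}(t)=g_{N(q-1),b}(t)\,(1+\tfrac1t)^{(q-1)N/a-1}$: under the hypotheses of (iv) the proof of Lemma \ref{attain-n(q-1)}(i),(iii) gives $g_{N(q-1),b}(t)\ge\frac1b$, and since the extra exponent is positive for $a<N(q-1)$ one gets $g_{a,b}(t)\ge\frac1b$, hence $\alpha_v\ge\frac{1}{bE_q}$; for the reverse inequality, $\inf_t g_{a,b}(t)\le g_{a,b}(\varepsilon)\to g_{N(q-1),b}(\varepsilon)$ as $a\uparrow N(q-1)$, and letting $\varepsilon\downarrow 0$ afterwards gives $\limsup_a\inf_t g_{a,b}\le\frac1b$.

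The main obstacle I anticipate is precisely the passage from pointwise to uniform control in the divergent cases: one must produce lower bounds on $G$ (or $g$) that are uniform in the auxiliary variable and degenerate in the correct way, since otherwise the escape of the minimizer to $s=1$ or $s=\infty$ (equivalently $t=0$ or $t=\infty$) could keep $\inf g$ bounded. The inequalities $e^x-1\le xe^x$ and Bernoulli, applied after the above normalization, are what make these bounds explicit; verifying that the resulting comparison functions $\psi$ and $h_b(t)/t$ have strictly positive infima is the only genuinely computational point, and it rests on the standing inequality $c=(q-1)(N-1)<1$.
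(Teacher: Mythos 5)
Your proposal is correct, and in its substantive parts it takes a genuinely different route from the paper. Both arguments start from the reduction $\alpha_v=\frac{1}{E_q}\inf_{t>0}g(t)$ of Lemma \ref{D-f-g}, and both lean on the bound $g>\frac1b$ (at $a=N(q-1)$, under the hypotheses of Lemma \ref{attain-n(q-1)} (i), (iii)) for the finite limits; your upper-bound halves of (i) and (iv) (evaluating $g$ near $t=0$, resp.\ at a fixed point, and passing to the limit in the parameter first) are essentially the paper's. The difference is in how the lower bounds and divergences are obtained. The paper's proof works with a minimizer $t_b$ (or $t_a$) of $g$, extracts subsequences, and runs a case analysis on $\liminf t_{b_j}\in\{0\}\cup(0,\infty)\cup\{\infty\}$ with a tailored estimate or contradiction in each case; for $b\uparrow b_0$ it additionally proves $t_2(b)\to0$ through the equation $\phi(t_2(b))=0$. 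You avoid minimizers entirely: the substitution $s=(1+t)^{1/b}$ puts $g$ into a form where the parameter dependence is isolated, yielding pointwise monotonicity in $b$ and in $\beta=\frac{(q-1)N}{a}$ (hence monotonicity of the infimum, since the substitution is a bijection for each fixed $b$), and the elementary inequalities $e^x-1\le xe^x$ and Bernoulli produce lower bounds of the form $\frac{1}{b^\beta}\psi(s)$ with $\inf_{s>1}\psi>0$, resp.\ $\frac{(q-1)N}{a}\cdot\frac{h_b(t)}{t}$ on $(0,T]$ plus $g\ge h_b\ge M$ on $[T,\infty)$, which are uniform in the variable and blow up in the parameter. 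The finite limits then follow by squeezing between $\frac{1}{b_0}$ (or $\frac1b$, via the factorization $g_{a,b}=g_{N(q-1),b}\cdot(1+\frac1t)^{\frac{(q-1)N}{a}-1}$) and the boundary value $\lim_{t\downarrow0}g=\frac1b$. What your approach buys is brevity and robustness — no subsequence extraction, no tracking of where the infimum is attained (indeed no attainment is needed at all), and monotonicity of $\alpha_v$ in $a$ and $b$ as a free byproduct; the paper's approach is more pedestrian but needs no change of variables and keeps every estimate in the original variable $t$. All the computational claims you flag (positivity of $\inf\psi$ using $c=(q-1)(N-1)<1$ and $\beta\ge1$, the limit $\frac{h_b(t)}{t}\to\frac1b$, the pointwise sign of $\partial_b\log G$ and $\partial_\beta\log G$) check out, so the plan is complete as stated.
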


\begin{proof}
(i) Let $\frac{2N-1}{2(N-1)}<q<1^*$, $a=N(q-1)$ and $b<b_0$. 
In the proof of Lemma \ref{attain-n(q-1)} (ii), we proved that $\inf_{t>0}g(t)$ is attained by some $t=t_b\in(0,\infty)$, and then 
\begin{align*}
&\alpha_v=\frac{1}{E_q}\inf_{t>0}g(t)=\frac{g(t_b)}{E_q}
=\frac{\left((1+t_b)^{\frac{1}{b}}-1\right)(1+t_b)^{1-\frac{(q-1)(N-1)}{b}}}{E_qt_b}\\
&=\frac{(1+t_b)^{1+\frac{N-q(N-1)}{b}}-(1+t_b)^{1-\frac{(q-1)(N-1)}{b}}}{E_qt_b}. 
\end{align*}
Noting $b<b_0<(q-1)(N-1)$ and thus $1-\frac{(q-1)(N-1)}{b}<0$, and using the inequality 
$\frac{(1+t)^{1+\frac{N-q(N-1)}{b}}-1}{t}\geq 1+\frac{N-q(N-1)}{b}$ for $t>0$, we see 
\begin{align*}
\alpha_v\geq\frac{(1+t_b)^{1+\frac{N-q(N-1)}{b}}-1}{E_qt_b}\geq\frac{1}{E_q}\left(1+\frac{N-q(N-1)}{b}\right), 
\end{align*}
which implies $\lim_{b\downarrow 0}\alpha_v=\infty$. 
Next, we prove $\lim_{b\uparrow b_0}\alpha_v=\frac{1}{b_0E_q}$. 
To this end, we claim $\lim_{b\uparrow b_0}t_2=0$, which implies $\lim_{b\uparrow b_0}t_3=0$ since $0<t_3<t_2$, 
where the numbers $t_2$ and $t_3$ are the ones introduced in the proof of Lemma \ref{attain-n(q-1)} (ii). 
We write $t_2=t_2(b)$ and $t_3=t_3(b)$ for $b<b_0$. Take any positive sequence $\{b_i\}_{i=1}^\infty$ satisfying 
$b_i\uparrow b_0$ as $i\to\infty$. Recall that for each $i$, $t_2(b_i)$ satisfies 
\begin{align}\label{t2-equation}
\phi(t_2(b_i))
=\left(
1+t_2(b_i)
\right)^{1+\frac{N-q(N-1)}{b_i}}
-\left(1+t_2(b_i)\right)^{1-\frac{(q-1)(N-1)}{b_i}}-\frac{t_2(b_i)}{b_i}=0. 
\end{align}
Since $1-\frac{(q-1)(N-1)}{b_i}\to 1-\frac{(q-1)(N-1)}{b_0}<0$ as $i\to\infty$, 
the equation \eqref{t2-equation} shows that the sequence $\{t_2(b_i)\}_{i=1}^\infty$ is bounded, 
and hence, we may assume that $\lim_{i\to\infty}t_2(b_i)=c_0$ for some $c_0\geq 0$. 
Then letting $i\to\infty$ in \eqref{t2-equation} gives $\phi(c_0)=0$, which implies $c_0=0$ 
since we proved $\phi(t)>0$ for $t>0$ when $b=b_0$ in the proof of Lemma \ref{attain-n(q-1)} (i). 
Therefore, the fact $\lim_{b\uparrow b_0}t_3(b)=0$ has been proved. Now we see 
\begin{align*}
&\liminf_{b\uparrow b_0}\alpha_v
=\frac{1}{E_q}\liminf_{b\uparrow b_0}\left(\inf_{t>0}g(t)\right)
=\frac{1}{E_q}\liminf_{b\uparrow b_0}g(t_3(b))\\
&=\frac{1}{E_q}\liminf_{b\uparrow b_0}\frac{
\left((1+t_3(b))^{\frac{1}{b}}-1\right)(1+t_3(b))^{1-\frac{(q-1)(N-1)}{b}}
}{t_3(b)}
=\frac{1}{E_q}\liminf_{b\uparrow b_0}\frac{
(1+t_3(b))^{\frac{1}{b}}-1
}{t_3(b)}\\
&\geq\frac{1}{E_q}\lim_{b\uparrow b_0}\frac{
(1+t_3(b))^{\frac{1}{b_0}}-1
}{t_3(b)}
=\frac{1}{E_q}\lim_{t\downarrow 0}\frac{(1+t)^{\frac{1}{b_0}}-1}{t}
=\frac{1}{b_0E_q}. 
\end{align*}
On the other hand, since $\inf_{t>0}g(t)\leq\lim_{t\downarrow 0}g(t)=\frac{1}{b}$ for $b<b_0$, 
we obtain $\limsup_{b\uparrow b_0}\alpha_v=\frac{1}{E_q}\limsup_{b\uparrow b_0}(\inf_{t>0}g(t))\leq\frac{1}{b_0E_q}$. 
As a result, the fact $\lim_{b\uparrow b_0}\alpha_v=\frac{1}{b_0E_q}$ has been proved. 

\medskip

\noindent
(ii)  Let $a<N(q-1)$. We first prove $\lim_{b\downarrow 0}\alpha_v=\infty$. 
Since $\lim_{t\downarrow 0}g(t)=\lim_{t\to\infty}g(t)=\infty$, there exists $t_b\in(0,\infty)$ such that $\inf_{t>0}g(t)=g(t_b)>0$. 
Take any positive sequence $\{b_j\}_{j=1}^\infty$ satisfying $b_j\downarrow 0$ as $j\to\infty$. 
First, suppose $\liminf_{j\to\infty}t_{b_j}\in(0,\infty]$. In this case, we may assume $t_{b_j}\geq c$ for $j$ with some $c>0$. 
Then we see for $j$, 
\begin{align*}
&\alpha_v=\frac{g(t_{b_j})}{E_q}
=\frac{
(1+t_{b_j})^{\frac{N(q-1)}{a}+\frac{N-q(N-1)}{b_j}}\left(1-(1+t_{b_j})^{-\frac{1}{b_j}}\right)
}{E_q
t_{b_j}^{\frac{N(q-1)}{a}}
}\\
&\geq\frac{1}{E_q}(1+c)^{\frac{N-q(N-1)}{b_j}}\left(1-(1+c)^{-\frac{1}{b_j}}\right)\to\infty
\end{align*}
as $j\to\infty$. Hence, there holds $\lim_{b\downarrow 0}\alpha_v=\infty$. 
Next, suppose $\liminf_{j\to\infty}t_{b_j}=0$, and we may assume $t_{b_j}\downarrow 0$ as $j\to\infty$. 
Since $b_j<1$ for large $j\in\Bbb N$, there holds 
$(1+t)^{\frac{1}{b_j}}-1\geq t(1+t)^{\frac{1}{b_j}-1}$ for $t>0$. 
By using this inequality and the condition $a<N(q-1)$, we see 
\begin{align*}
&\alpha_v=\frac{g(t_{b_j})}{E_q}
=\frac{
\left((1+t_{b_j})^{\frac{1}{b_j}}-1\right)(1+t_{b_j})^{(q-1)(\frac{N}{a}-\frac{N-1}{b_j})}
}{
E_qt_{b_j}^{\frac{N(q-1)}{a}}
}\\
&\geq\frac{
(1+t_{b_j})^{
\frac{N(q-1)}{a}-1+\frac{N-q(N-1)}{b_j}
}
}{E_qt_{b_j}^{\frac{N(q-1)}{a}-1}}
\geq\frac{1}{E_qt_{b_j}^{\frac{N(q-1)}{a}-1}}\to\infty
\end{align*}
as $j\to\infty$. Then there holds $\lim_{b\downarrow 0}\alpha_v=\infty$. Next, we see 
\begin{align*}
\alpha_v=\frac{1}{E_q}\inf_{t>0}g(t)\leq\frac{g(1)}{E_q}
=\frac{1}{E_q}(2^{\frac{1}{b}}-1)2^{(q-1)(\frac{N}{a}-\frac{N-1}{b})}\to0
\end{align*}
as $b\to\infty$, and thus it follows $\lim_{b\to\infty}\alpha_v=0$. 

\medskip

\noindent
(iii) We prove $\lim_{a\downarrow 0}\alpha_v=\infty$. 
Let $a<N(q-1)$. Then since $\lim_{t\downarrow 0}g(t)=\lim_{t\to\infty}g(t)=\infty$, 
there exists $t_a\in(0,\infty)$ such that $\inf_{t>0}g(t)=g(t_a)>0$. 
Take any positive sequence $\{a_j\}_{j=1}^\infty$ satisfying $N(q-1)>a_j\downarrow 0$ as $j\to\infty$. 
First, suppose $\liminf_{j\to\infty}t_{a_j}\in(0,\infty]$. 
In this case, we may assume $t_{a_j}\geq c$ for $j$ with some $c>0$. 
Then we see for $j$, 
\begin{align}
&\notag\alpha_v=\frac{g(t_{a_j})}{E_q}=
\frac{
(1+t_{a_j})^{\frac{N(q-1)}{a_j}+\frac{N-q(N-1)}{b}}\left(1-(1+t_{a_j})^{-\frac{1}{b}}\right)
}{E_qt_{a_j}^{\frac{N(q-1)}{a_j}}}\\
&\label{ato0-est}\geq\frac{1}{E_q}\left(1+\frac{1}{t_{a_j}}\right)^{\frac{N(q-1)}{a_j}}(1+t_{a_j})^{\frac{N-q(N-1)}{b}}\left(1-(1+c)^{-\frac{1}{b}}\right). 
\end{align}
Suppose $\liminf_{j\to\infty}t_{a_j}=\infty$. Then by \eqref{ato0-est}, we have  
\begin{align*}
\alpha_v\geq\frac{1}{E_q}(1+t_{a_j})^{\frac{N-q(N-1)}{b}}\left(1-(1+c)^{-\frac{1}{b}}\right)\to\infty
\end{align*}
as $j\to\infty$, and hence $\lim_{j\to\infty}\alpha_v=\infty$. 
Suppose $\liminf_{j\to\infty}t_{a_j}\in(0,\infty)$. Then we may assume that $c\leq t_{a_j}\leq\tilde c$ for $j$ with some $\tilde c>c$. 
Hence, by \eqref{ato0-est}, we see 
\begin{align*}
\alpha_v\geq\frac{1}{E_q}\left(1+\frac{1}{\tilde c}\right)^{\frac{N(q-1)}{a_j}}(1+c)^{\frac{N-q(N-1)}{b}}\left(1-(1+c)^{-\frac{1}{b}}\right)\to\infty
\end{align*}
as $j\to\infty$, and hence $\lim_{j\to\infty}\alpha_v=\infty$. 
Next, suppose $\liminf_{j\to\infty}t_{a_j}=0$, and we may assume $t_{a_j}\downarrow 0$ as $j\to\infty$. 
Then we see 
\begin{align*}
\alpha_v=\frac{g(t_{a_j})}{E_q}\geq\frac{1-(1+t_{a_j})^{-\frac{1}{b}}}{E_q t_{a_j}^{\frac{N(q-1)}{a_j}}}\to\infty
\end{align*}
as $j\to\infty$, and hence, $\lim_{j\to\infty}\alpha_v=\infty$. 
As a conclusion, we have proved $\lim_{a\downarrow 0}\alpha_v=\infty$. 

\medskip

\noindent
(iv) Let $\frac{2N-1}{2(N-1)}<q<1^*$ and $b\geq b_0$, or let $1<q\leq\frac{2N-1}{2(N-1)}$. 
We prove $\lim_{a\uparrow N(q-1)}\alpha_v=\frac{1}{bE_q}$. 
For $a>0$, we write $\alpha_v=\alpha_v(a)$ and $g=g_a$. Letting $a<N(q-1)$, we see 
$\alpha_v(a)\leq\frac{g_a(t)}{E_q}$ for $t>0$, and then 
$\limsup_{a\uparrow N(q-1)}\alpha_v(a)\leq\frac{g_{N(q-1)}(t)}{E_q}$ for $t>0$. 
Taking the infimum for $t\in(0,\infty)$ in this relation yields $\limsup_{a\uparrow N(q-1)}\alpha_v(a)\leq\alpha_v(N(q-1))=\frac{1}{bE_q}$, 
where we have used Lemma \ref{attain-n(q-1)} (i) and (iii). 
Next, let $a<N(q-1)$, and let $t_a\in(0,\infty)$ be a point satisfying $\inf_{t>0}g_a(t)=g_a(t_a)$, 
and hence, $\alpha_v(a)=\frac{g_a(t_a)}{E_q}$. 
Take any positive sequence $\{a_j\}_{j=1}^\infty$ satisfying 
$a_j\uparrow N(q-1)$ as $j\to\infty$. First, suppose $\overline\lim_{j\to\infty}t_{a_j}\in(0,\infty)$, and then we may assume 
$t_{a_j}\to t_0\in(0,\infty)$ as $j\to\infty$. We see 
\begin{align*}
\lim_{j\to\infty}\alpha_v(a_j)=\frac{1}{E_q}\lim_{j\to\infty}g_{a_j}(t_{a_j})=\frac{g_{N(q-1)}(t_0)}{E_q}
\geq\frac{1}{E_q}\inf_{t>0}g_{N(q-1)}g(t)=\alpha_v(N(q-1))=\frac{1}{bE_q}, 
\end{align*}
where we have used Lemma \ref{attain-n(q-1)} (i) and (iii). 
On the other hand, since we have already proved $\lim_{j\to\infty}\alpha_v(a_j)\leq\frac{1}{bE_q}$, 
we obtain $g_{N(q-1)}(t_0)=\frac{1}{b}$. However, this is impossible since we observed $g_{N(q-1)}(t)>\frac{1}{b}$ for $t>0$ 
in the proof of Lemma \ref{attain-n(q-1)} (i) and (iii). Next, suppose $\overline\lim_{j\to\infty}t_{a_j}=\infty$, and then we may assume 
$t_{a_j}\to\infty$ as $j\to\infty$. We see for $j$, 
\begin{align*}
&\alpha_v(a_j)=\frac{g_{a_j}(t_{a_j})}{E_q}
=\frac{(1+t_{a_j})^{\frac{N(q-1)}{a_j}+\frac{N-q(N-1)}{b}}\left(1-(1+t_{a_j})^{-\frac{1}{b}}\right)}{
E_qt_{a_j}^{\frac{N(q-1)}{a_j}}
}\\&\geq\frac{1}{E_q}(1+t_{a_j})^{\frac{N-q(N-1)}{b}}\left(1-(1+t_{a_j})^{-\frac{1}{b}}\right)\to\infty
\end{align*}
as $j\to\infty$, and hence, $\lim_{j\to\infty}\alpha_v(a_j)=\infty$, 
which is a contradiction since we have already proved $\limsup_{j\to\infty}\alpha_v(a_j)\leq\frac{1}{bE_q}$. 
As a result, it holds $\lim_{j\to\infty}t_{a_j}=0$, and hence, $\lim_{a\uparrow N(q-1)}t_a=0$. 
Then we see 
\begin{align*}
&\liminf_{a\uparrow N(q-1)}\alpha_v(a)=\frac{1}{E_q}\liminf_{a\uparrow N(q-1)}g_a(t_a)
=\frac{1}{E_q}\liminf_{a\uparrow N(q-1)}\frac{
\left((1+t_a)^{\frac{1}{b}}-1\right)(1+t_a)^{(q-1)(\frac{N}{a}-\frac{N-1}{b})}
}{t_a^{\frac{N(q-1)}{a}}}\\
&\geq\frac{1}{E_q}\liminf_{a\uparrow N(q-1)}\frac{(1+t_a)^{\frac{1}{b}}-1}{t_a^{\frac{N(q-1)}{a}}}
\geq\frac{1}{E_q}\lim_{a\uparrow N(q-1)}\frac{(1+t_a)^{\frac{1}{b}}-1}{t_a}=\frac{1}{bE_q}. 
\end{align*}
As a conclusion, we have $\lim_{a\uparrow N(q-1)}\alpha_v(a)=\frac{1}{bE_q}$. 
\end{proof}

\begin{proof}[{\rm \bf Proof of Theorems \ref{thm1}-\ref{thm2}}]
Gathering up Lemmas \ref{sub-pro1}-\ref{attain-n(q-1)} and Proposition \ref{sub-pro2-est}, we have the results stated in Theorems \ref{thm1}-\ref{thm2}. 
\end{proof}

\section{Proof of Theorems \ref{thm3}-\ref{thm4}}
\noindent

In this section, we shall prove Theorems \ref{thm3}-\ref{thm4}. 
We start from the following lemma : 

\begin{lem}\label{alpha^*-h}
Let $\alpha>0$, $a>0$ and $b>0$. Then there hold $D_\alpha=\sup_{t>0}f_\alpha(t)$, 
$\alpha_v=\frac{1}{E_{1^*}}\inf_{t>0}g(t)$ and $\alpha_c=\frac{1}{E_{1^*}}\sup_{t>0}h(t)$ where for $t>0$, 
\begin{align*}
\begin{cases}
&f_\alpha(t):=\displaystyle\frac{(1+t)^{\frac{1^*}{a}-\frac{1}{b}}+\alpha E_{1^*}t^{\frac{1^*}{a}}}{(1+t)^{\frac{1^*}{a}}},\\
&g(t):=\displaystyle\frac{(1+t)^{\frac{1^*}{a}}-(1+t)^{\frac{1^*}{a}-\frac{1}{b}}}{t^{\frac{1^*}{a}}},\\
&h(t):=\displaystyle\frac{(1+t)^{\frac{1^*}{a}-\frac{1}{b}}}{(1+t)^{\frac{1^*}{a}}-t^{\frac{1^*}{a}}}. 
\end{cases}
\end{align*}
\end{lem}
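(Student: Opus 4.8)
The plan is to observe that the first two identities are exactly the $q=1^*$ specialization of Lemma \ref{D-f-g}, while the formula for $\alpha_c$ requires a parallel but separate argument. First I would record that when $q=1^*$ the exponents appearing in Lemma \ref{D-f-g} collapse: since $1^*-1=\frac{1}{N-1}$ one has $(q-1)N=1^*$, $N-q(N-1)=0$, $(q-1)(\frac{N}{a}-\frac{N-1}{b})=\frac{1^*}{a}-\frac{1}{b}$, and $E_q=E_{1^*}$. Substituting these into the expressions for $f_\alpha$ and $g$ in Lemma \ref{D-f-g} reproduces verbatim the functions claimed here, so the identities $D_\alpha=\sup_{t>0}f_\alpha(t)$ and $\alpha_v=\frac{1}{E_{1^*}}\inf_{t>0}g(t)$ follow immediately.

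Next, for the upper bound $\alpha_c\le\frac{1}{E_{1^*}}\sup_{t>0}h(t)$, I would fix $u\in BV$ with $\|u\|_{TV}^a+\|u\|_1^b=1$ and invoke Mazya's inequality \eqref{mazya} in the form $\|u\|_{1^*}^{1^*}\le E_{1^*}\|u\|_{TV}^{1^*}$, which gives $E_{1^*}-\|u\|_{1^*}^{1^*}\ge E_{1^*}(1-\|u\|_{TV}^{1^*})>0$. Writing $t:=\frac{\|u\|_{TV}^a}{\|u\|_1^b}$ and using the constraint to express $\|u\|_1^b=(1+t)^{-1}$ and $\|u\|_{TV}^a=\frac{t}{1+t}$, a direct computation (multiplying numerator and denominator by $(1+t)^{1^*/a}$) identifies $\frac{\|u\|_1}{E_{1^*}(1-\|u\|_{TV}^{1^*})}=\frac{1}{E_{1^*}}h(t)$. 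Hence $\frac{\|u\|_1}{E_{1^*}-\|u\|_{1^*}^{1^*}}\le\frac{1}{E_{1^*}}h(t)\le\frac{1}{E_{1^*}}\sup_{s>0}h(s)$, and taking the supremum over $u$ yields the bound.

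For the reverse inequality I would test with a maximizer of Mazya's inequality. Let $v\in BV\setminus\{0\}$ be a maximizer of $E_{1^*}$, available by Proposition \ref{GN-attain} (ii), and define the scaled family $v_\lambda(x):=K\lambda v(\lambda^{1/N}x)$ exactly as in the proof of Lemma \ref{D-f-g}, with $K=K(\lambda)$ normalizing the constraint. Because $v$ attains equality in Mazya's inequality and this equality is preserved under dilation and scalar multiplication, each $v_\lambda$ again satisfies $\|v_\lambda\|_{1^*}^{1^*}=E_{1^*}\|v_\lambda\|_{TV}^{1^*}$, so the inequality of the previous paragraph becomes an equality $\frac{\|v_\lambda\|_1}{E_{1^*}-\|v_\lambda\|_{1^*}^{1^*}}=\frac{1}{E_{1^*}}h\left(\frac{\|v_\lambda\|_{TV}^a}{\|v_\lambda\|_1^b}\right)$. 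As established in Lemma \ref{D-f-g}, the monotonicity of $K(\lambda)$ shows that $\frac{\|v_\lambda\|_{TV}^a}{\|v_\lambda\|_1^b}=\frac{1}{K^b\|v\|_1^b}-1$ sweeps out all of $(0,\infty)$ as $\lambda$ ranges over $(0,\infty)$, whence $\alpha_c\ge\frac{1}{E_{1^*}}\sup_{t>0}h(t)$. Combining the two bounds gives the claim, valid in $(0,\infty]$ irrespective of whether $\sup_{t>0}h(t)$ is finite.

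The step requiring the most care is the tightness of the upper bound, namely the passage from inequality to equality: it hinges on the characterization that equality in Mazya's inequality holds precisely for scalar multiples of characteristic functions of balls, together with the observation that the scaling $v_\lambda$ keeps us inside this equality class. Everything else reduces to a routine verification of the exponent algebra and of the range of the parameter $t$.
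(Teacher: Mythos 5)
Your proposal is correct and follows essentially the same route as the paper: the first two identities by setting $q=1^*$ in Lemma \ref{D-f-g}, the upper bound on $\alpha_c$ via Mazya's inequality and the substitution $t=\frac{\|u\|_{TV}^a}{\|u\|_1^b}$ under the constraint, and the lower bound by testing with the scaled family $v_\lambda$ built from a maximizer of $E_{1^*}$, whose parameter sweeps out all of $(0,\infty)$. Your explicit remark that the equality case of Mazya's inequality is preserved under the scaling $v\mapsto K\lambda v(\lambda^{1/N}\cdot)$ is exactly the (unstated but used) justification for the equality $E_{1^*}-\|v_\lambda\|_{1^*}^{1^*}=E_{1^*}(1-\|v_\lambda\|_{TV}^{1^*})$ in the paper's argument.
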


\begin{proof}
The former two equalities are obtained by putting $q=1^*$ in Lemma \ref{D-f-g}. 
Hence, we consider $\alpha_c$. For $u\in BV$ with $\|u\|_{TV}^a++\|u\|_{1}^b=1$, we see 
\begin{align*}
&\frac{\|u\|_{1}}{E_{1^*}-\|u\|_{1^*}^{1^*}}\leq\frac{\|u\|_{1}}{E_{1^*}(1-\|u\|_{TV}^{1^*})}
=\frac{
\left(
1+\frac{\|u\|_{TV}^a}{\|u\|_{1}^b}
\right)^{\frac{1^*}{a}-\frac{1}{b}}
}{
E_{1^*}\left(
\left(
1+\frac{\|u\|_{TV}^a}{\|u\|_{1}^b}
\right)^{\frac{1^*}{a}}-\left(\frac{\|u\|_{TV}^a}{\|u\|_{1}^b}\right)^{\frac{1^*}{a}}
\right)
}\\
&=\frac{1}{E_{1^*}}h\left(\frac{\|u\|_{TV}^a}{\|u\|_{1}^b}\right)
\leq\frac{1}{E_{1^*}}\sup_{t>0}h(t), 
\end{align*}
which shows $\alpha_c\leq\frac{1}{E_{1^*}}\sup_{t>0}h(t)$. 
On the other hand, let $v\in BV\setminus\{0\}$ be a maximizer of $E_{1^*}$. 
For $\lambda>0$, define $v_\lambda(x):=K\lambda v(\lambda^{\frac{1}{N}}x)$, 
where $K=K(\lambda)>0$ is uniquely determined by 
\begin{align*}
\|v_\lambda\|_{TV}^a+\|v_\lambda\|_{1}^b=K^a\lambda^{\frac{a}{N}}\|v\|_{TV}^a+K^b\|v\|_{1}^b=1. 
\end{align*}  
Then for $\lambda>0$, we observe 
\begin{align*}
&\alpha_c\geq\frac{\|v_\lambda\|_{1}}{E_{1^*}-\|v_\lambda\|_{1^*}^{1^*}}
=\frac{\|v_\lambda\|_{1}}{E_{1^*}(1-\|v_\lambda\|_{TV}^{1^*})}
=\frac{1}{E_{1^*}}h\left(\frac{\|v_\lambda\|_{TV}^a}{\|v_\lambda\|_{1}^b}\right)\\
&=\frac{1}{E_{1^*}}h\left(K^{a-b}\lambda^{\frac{a}{N}}\frac{\|v\|_{TV}^a}{\|v\|_{1}^b}\right)
=\frac{1}{E_{1^*}}h\left(\frac{1}{K^b\|v\|_{1}^b}-1\right). 
\end{align*}
Since $K=K(\lambda)$ is a continuous function on $(0,\infty)$ 
satisfying $K<\frac{1}{\|v\|_{1}}$ for $\lambda>0$, 
$\lim_{\lambda\downarrow 0}K=\frac{1}{\|v\|_{1}}$ and $\lim_{\lambda\to\infty}K=0$, 
we obtain 
\begin{align*}
&\alpha_c\geq\frac{1}{E_{1^*}}\sup_{\lambda>0}h\left(\frac{1}{K^b\|v\|_{1}^b}-1\right)
=\frac{1}{E_{1^*}}\sup_{t>0}h(t). 
\end{align*}
Hence, the proof of Lemma \ref{alpha^*-h} is complete. 
\end{proof}

\begin{lem}\label{vani-conce-ge}Let $a>0$ and $b>0$. 

\medskip

\noindent
{\rm(i)} Assume $\alpha_c<\infty$ and let $\alpha>\alpha_c$. 
Then $D_\alpha$ is not attained. 

\medskip

\noindent
{\rm(ii)} Assume $\alpha_v>0$ and let $\alpha<\alpha_v$. 
Then $D_\alpha$ is not attained. 

\medskip

\noindent
{\rm(iii)} Assume $\alpha_v<\alpha_c$ and let $\alpha_v<\alpha<\alpha_c$. 
Then $D_\alpha$ is attained. 
\end{lem}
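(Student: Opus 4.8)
The plan is to transport the whole problem to the one–dimensional function $f_\alpha$ and to read off attainability from its boundary behaviour. By Lemma \ref{alpha^*-h} we have $D_\alpha=\sup_{t>0}f_\alpha(t)$, $\alpha_v=\frac{1}{E_{1^*}}\inf_{t>0}g(t)$ and $\alpha_c=\frac{1}{E_{1^*}}\sup_{t>0}h(t)$, and by Theorem \ref{thm5} the value $D_\alpha$ is attained if and only if $\sup_{t>0}f_\alpha(t)$ is attained at some interior point $t_0\in(0,\infty)$. Rewriting $f_\alpha(t)=(1+t)^{-1/b}+\alpha E_{1^*}\bigl(\tfrac{t}{1+t}\bigr)^{1^*/a}$, I first record the two endpoint limits $\lim_{t\downarrow0}f_\alpha(t)=1$ and $\lim_{t\to\infty}f_\alpha(t)=\alpha E_{1^*}$.

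The algebraic heart of the argument is a pair of pointwise equivalences obtained by clearing denominators: for every $t>0$,
\begin{align*}
f_\alpha(t)>1\iff \alpha E_{1^*}>g(t),\qquad f_\alpha(t)>\alpha E_{1^*}\iff h(t)>\alpha E_{1^*}.
\end{align*}
Both follow from the explicit forms of $g$ and $h$ in Lemma \ref{alpha^*-h} by moving the relevant endpoint value to the other side and simplifying; these are the only computations and they are routine. Combined with the definitions of $\alpha_v$ and $\alpha_c$ they yield the translation $\sup_{t>0}f_\alpha(t)>1\iff \alpha>\alpha_v$ and $\sup_{t>0}f_\alpha(t)>\alpha E_{1^*}\iff\alpha<\alpha_c$.

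For part (i), $\alpha>\alpha_c$ forces $\alpha E_{1^*}>h(t)$ for all $t$, hence $f_\alpha(t)<\alpha E_{1^*}$ on $(0,\infty)$ while $\lim_{t\to\infty}f_\alpha(t)=\alpha E_{1^*}$; thus $\sup_{t>0}f_\alpha(t)=\alpha E_{1^*}$ is approached only at infinity and is not attained. Symmetrically, for part (ii), $\alpha<\alpha_v$ forces $g(t)>\alpha E_{1^*}$ for all $t$, hence $f_\alpha(t)<1$ on $(0,\infty)$ with $\lim_{t\downarrow0}f_\alpha(t)=1$, so $\sup_{t>0}f_\alpha(t)=1$ is not attained. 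In both cases the non-attainment of the one-dimensional supremum transfers to $D_\alpha$ through Theorem \ref{thm5}.

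Part (iii) is where I expect the only genuine (though still mild) difficulty. The conditions $\alpha_v<\alpha$ and $\alpha<\alpha_c$ produce, via the translation above, points $t_1,t_2\in(0,\infty)$ with $f_\alpha(t_1)>1$ and $f_\alpha(t_2)>\alpha E_{1^*}$, whence $\sup_{t>0}f_\alpha(t)>\max\{1,\alpha E_{1^*}\}$. Since the two endpoint limits of $f_\alpha$ equal exactly $1$ and $\alpha E_{1^*}$, a standard compactness argument applies: choosing $\varepsilon>0$ with $\max\{1,\alpha E_{1^*}\}+\varepsilon<\sup_{t>0}f_\alpha(t)$, one finds $0<\delta<M<\infty$ outside of which $f_\alpha<\max\{1,\alpha E_{1^*}\}+\varepsilon$, so the supremum coincides with the maximum of $f_\alpha$ over the compact interval $[\delta,M]$ and is therefore attained at an interior point. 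Theorem \ref{thm5} then yields a maximizer of $D_\alpha$. The subtle point to handle carefully is precisely this separation from \emph{both} endpoint values simultaneously — it is what distinguishes the attained regime $(\alpha_v,\alpha_c)$ from the threshold and exterior regimes treated in (i) and (ii).
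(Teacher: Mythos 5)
Your proposal is correct and follows essentially the same route as the paper: reduce to the one-dimensional function $f_\alpha$ via Lemma \ref{alpha^*-h} and Theorem \ref{thm5}, exploit the pointwise sign equivalences between $f_\alpha(t)-1$, $f_\alpha(t)-\alpha E_{1^*}$ and $g(t)$, $h(t)$, and compare with the endpoint limits $\lim_{t\downarrow0}f_\alpha(t)=1$, $\lim_{t\to\infty}f_\alpha(t)=\alpha E_{1^*}$. The only differences are presentational — the paper casts (i) and (ii) as contradiction arguments and leaves the compactness step in (iii) implicit, which you spell out explicitly.
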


\begin{proof}
By Theorem \ref{thm5}, we see that $D_\alpha$ is attained if and only if $\sup_{t>0}f_\alpha(t)$ is attained. 

\medskip

\noindent
(i) Assume $\alpha_c<\infty$ and let $\alpha>\alpha_c$. 
By contradiction, assume that there exists $t_0>0$ such that 
$\sup_{t>0}f_\alpha(t)=f_\alpha(t_0)$. First, note $\sup_{t>0}f_\alpha(t)\geq\lim_{t\to\infty}f_\alpha(t)=\alpha E_{1^*}$. By Lemma \ref{alpha^*-h} and the assumption $\alpha>\alpha_c$, 
we obtain $\alpha>\alpha_c\geq\frac{1}{E_{1^*}}h(t_0)$, which implies $f_\alpha(t_0)<\alpha E_{1^*}$. 
Then we see $\alpha E_{1^*}\leq\sup_{t>0}f_\alpha(t)=f_\alpha(t_0)<\alpha E_{1^*}$, 
which is a contradiction. Thus $\sup_{t>0}f_\alpha(t)$ is not attained. 

\medskip

\noindent
(ii) Assume $\alpha_v>0$ and let $\alpha<\alpha_v$. 
By contradiction, assume that there exists $t_0>0$ such that $\sup_{t>0}f_\alpha(t)=f_\alpha(t_0)$. 
First, note $\sup_{t>0}f_\alpha(t)\geq\lim_{t\downarrow 0}f_\alpha(t)=1$. 
By Lemma \ref{alpha^*-h} and the assumption $\alpha<\alpha_v$, we obtain 
$\alpha<\alpha_v\leq\frac{1}{E_{1^*}}g(t_0)$, which implies $f_\alpha(t_0)<1$. 
Then we see $1\leq\sup_{t>0}f_\alpha(t)=f_\alpha(t_0)<1$, which is a contradiction. Thus $\sup_{t>0}f_\alpha(t)$ is not attained. 

\medskip

\noindent
(iii) Assume $\alpha_v<\alpha_c$ and let $\alpha_v<\alpha<\alpha_c$. 
First, note that $\lim_{t\downarrow 0}f_\alpha(t)=1$ and $\lim_{t\to\infty}f_\alpha(t)=\alpha E_{1^*}$. 
By the assumption $\alpha>\alpha_v$, there exists $t_0>0$ such that 
$\alpha>\frac{1}{E_{1^*}}g(t_0)$, which implies $f_\alpha(t_0)>1$. 
On the other hand, by the assumption $\alpha<\alpha_c$, 
there exists $t_1>0$ such that $\alpha<\frac{1}{E_{1^*}}h(t_1)$, which implies $f_\alpha(t_1)>\alpha E_{1^*}$. As a result, $\sup_{t>0}f_\alpha(t)$ is attained. 
\end{proof}

\begin{lem}\label{a>1^*-ests}
Let $a>1^*$ and $b>0$. Then there hold 
\begin{align*}
\alpha_v=0\text{ \,and \,}
\alpha_c=\begin{cases}
&\infty\text{ \,when \,}b>1,\\
&\frac{1}{E_{1^*}}<\alpha_c<\infty\text{ \,when \,}b\leq1.
\end{cases}
\end{align*} In particular, there hold 
$\lim_{b\downarrow 0}\alpha_c=\frac{1}{E_{1^*}}$, 
$\lim_{b\uparrow 1}\alpha_c=\frac{a}{1^*E_{1^*}}$, $\alpha_c=\frac{a}{1^*E_{1^*}}$ when $b=1$, 
$\lim_{a\downarrow 1^*}\alpha_c=\frac{1}{E_{1^*}}$ when $b\leq1$ and $\lim_{a\to\infty}\alpha_c=\infty$ when $b\leq1$. 
Moreover, 
\begin{align*}
D_\alpha\text{ is attained for }
\begin{cases}
&\alpha>0\text{ \,when \,}b>1,\\
&0<\alpha<\alpha_c\text{ \,when \,}b=1,\\
&0<\alpha\leq\alpha_c\text{ \,when \,}b<1,
\end{cases}
\end{align*}
while 
\begin{align*}
D_\alpha\text{ is not attained for }
\begin{cases}
&\alpha\geq\alpha_c\text{ \,when \,}b=1,\\
&\alpha>\alpha_c\text{ \,when \,}b<1. 
\end{cases}
\end{align*}
\end{lem}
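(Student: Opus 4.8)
The plan is to reduce every assertion to an analysis of the one–variable functions $g$ and $h$ supplied by Lemma \ref{alpha^*-h}, writing $s:=\frac{1^*}{a}\in(0,1)$ throughout (recall $a>1^*$). For the vanishing threshold I would first record the behaviour of $g$ near $0$: factoring $g(t)=(1+t)^{s-1/b}\bigl((1+t)^{1/b}-1\bigr)/t^s$ and expanding gives $g(t)\sim\frac{1}{b}\,t^{1-s}$, and since $s<1$ this forces $g(0^+)=0$. As $g>0$ on $(0,\infty)$, we get $\inf_{t>0}g(t)=0$, hence $\alpha_v=\frac{1}{E_{1^*}}\inf_{t>0}g(t)=0$.

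The core of the lemma is the evaluation of $\alpha_c=\frac{1}{E_{1^*}}\sup_{t>0}h(t)$. First I would record $h(0^+)=1$ and the behaviour at infinity: since $(1+t)^s-t^s\sim s\,t^{s-1}$, one finds $h(t)\sim\frac{1}{s}\,t^{1-1/b}$, so $h(\infty)=\infty$ if $b>1$, $h(\infty)=\frac{1}{s}=\frac{a}{1^*}$ if $b=1$, and $h(\infty)=0$ if $b<1$; this already yields $\alpha_c=\infty$ for $b>1$. The key tool for $b\le 1$ is the pointwise bound
\begin{align*}
h(t)=\frac{(1+t)^{s-1/b}}{(1+t)^s-t^s}<\frac{(1+t)^{s-1/b}}{s\,(1+t)^{s-1}}=\frac{1}{s}\,(1+t)^{1-1/b},
\end{align*}
where $(1+t)^s-t^s>s\,(1+t)^{s-1}$ follows from the mean value theorem applied to $x\mapsto x^s$ together with $s-1<0$. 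For $b=1$ this gives $h(t)<\frac{a}{1^*}$ while $h(\infty)=\frac{a}{1^*}$, so $\sup_{t>0}h=\frac{a}{1^*}$ (not attained) and $\alpha_c=\frac{a}{1^*E_{1^*}}$. For $b<1$ the bound gives $\sup h\le\frac{a}{1^*}<\infty$; expanding near $0$ shows $h(t)=1+t^s+o(t^s)>1$ because $s<1$, and continuity together with $h(0^+)=1$, $h(\infty)=0$ then forces the supremum, being $>1$, to be attained at an interior point $t_c>0$. This proves $\frac{1}{E_{1^*}}<\alpha_c<\infty$ for $b\le 1$.

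For the asymptotics I would argue by squeezing. The limits $\lim_{b\uparrow1}\alpha_c=\frac{a}{1^*E_{1^*}}$ and $\lim_{a\downarrow1^*}\alpha_c=\frac{1}{E_{1^*}}$ (for $b\le 1$) come from combining the uniform bound $\sup h\le\frac{1}{s}$ with the lower bounds $\sup h\ge h(T)$ (whose $b\uparrow1$ limit tends to $\frac{1}{s}$ as $T\to\infty$) and $\sup h\ge h(0^+)=1$ (using $s\uparrow1$ as $a\downarrow1^*$), respectively; the case $b=1$ is immediate from the explicit value. For $\lim_{a\to\infty}\alpha_c=\infty$ (with $b\le1$) I would note that for fixed $t$ the denominator $(1+t)^s-t^s\to0$ as $s\downarrow0$ while the numerator stays bounded away from $0$, so $h(t)\to\infty$ and a fortiori $\sup h\to\infty$. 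The most delicate limit is $\lim_{b\downarrow0}\alpha_c=\frac{1}{E_{1^*}}$: fixing $\varepsilon>0$, I would choose $\delta$ with $\bigl((1+\delta)^s-\delta^s\bigr)^{-1}<1+\varepsilon$, so that $h<1+\varepsilon$ on $(0,\delta]$ (the numerator is $<1$ and the denominator is $\ge(1+\delta)^s-\delta^s$ there), and use $h(t)<\frac{1}{s}(1+\delta)^{1-1/b}\to0$ on $[\delta,\infty)$ as $b\downarrow0$; together with $\sup h\ge1$ this squeezes $\sup h\to1$.

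Finally, the attainability statements follow from Lemma \ref{vani-conce-ge} together with the sign identity
\begin{align*}
f_\alpha(t)-\alpha E_{1^*}=\frac{(1+t)^{s-1/b}-\alpha E_{1^*}\bigl((1+t)^s-t^s\bigr)}{(1+t)^s},
\end{align*}
which shows that $f_\alpha(t)-\alpha E_{1^*}$ has the same sign as $h(t)-\alpha E_{1^*}$. For $b>1$ one has $\alpha_v=0<\alpha<\alpha_c=\infty$ for every $\alpha>0$, so $D_\alpha$ is attained by Lemma \ref{vani-conce-ge} (iii); for $b\le1$ the same lemma disposes of $0<\alpha<\alpha_c$ (attained) and $\alpha>\alpha_c$ (not attained). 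The only genuine work is the threshold $\alpha=\alpha_c$: by the sign identity and $f_\alpha(\infty)=\alpha E_{1^*}$ one gets $\sup_{t>0}f_{\alpha_c}(t)=\alpha_c E_{1^*}$, and this value is achieved by $f_{\alpha_c}$ at a finite point precisely when $\sup h$ is achieved at a finite $t$. Hence $D_{\alpha_c}$ is attained for $b<1$ (supremum at the interior $t_c$) but not for $b=1$ (supremum only approached as $t\to\infty$). I expect this threshold dichotomy — tying attainability of $D_{\alpha_c}$ to whether $\sup h$ is an interior maximum or an escape to $t=\infty$ — to be the main obstacle, since it requires the sharp distinction between $b<1$ and $b=1$ rather than merely the boundary asymptotics of $h$.
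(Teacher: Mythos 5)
Your proposal is correct, and it reaches every assertion of the lemma by a genuinely different route from the paper. The paper determines the exact shape of $h$ by differentiation: it computes $h'$ via an auxiliary function $\tilde h$, shows $h$ is strictly increasing when $b=1$ and unimodal with a unique interior maximum $t_1$ when $b<1$, and then obtains all four asymptotic limits by tracking the critical point ($t_1(b)\to 0$ as $b\downarrow 0$, $t_1(b)\to\infty$ as $b\uparrow 1$ via the implicit equation $\tilde h(t_1)=0$ and a contradiction argument, plus estimates like $t_1^{1-b}\to 1$). You avoid derivatives entirely: your mean-value bound $(1+t)^s-t^s>s(1+t)^{s-1}$ gives the clean uniform estimate $h<\frac{1}{s}(1+t)^{1-1/b}\le\frac 1s$, the expansion $h(t)=1+t^s+o(t^s)$ gives $\sup h>1$, and continuity plus the endpoint limits then force interior attainment of $\sup h$ when $b<1$ — which is all the structure actually needed. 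Your asymptotics become short squeezing arguments (notably $1\le \sup h\le \frac 1s$ immediately gives the $a\downarrow 1^*$ limit, and the $\varepsilon$-$\delta$ splitting handles $b\downarrow 0$ cleanly; the paper's own $b\downarrow 0$ argument tacitly needs $t_1/b\to 0$, which your argument sidesteps). Your treatment of the threshold $\alpha=\alpha_c$ via the sign identity relating $f_\alpha-\alpha E_{1^*}$ to $h-\alpha E_{1^*}$ is equivalent to the paper's, reducing attainability of $D_{\alpha_c}$ to whether $\sup h$ is attained at a finite point (here you are implicitly invoking the Theorem \ref{thm5} equivalence that $D_\alpha$ is attained iff $\sup_{t>0}f_\alpha(t)$ is attained, which is worth citing explicitly). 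What the paper's heavier computation buys is finer structural information — monotonicity and uniqueness of the maximum point $t_1$, hence explicit control of where the maximizing profile sits — while your approach buys brevity and robustness, since nothing depends on sign analyses of $\tilde h$ and $\tilde h'$ or on following an implicitly defined critical point through a limit.
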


\begin{proof}
Let $a>1^*$. First, we can compute 
\begin{align*}
\lim_{t\downarrow 0}g(t)=\lim_{t\downarrow 0}\frac{(1+t)^{\frac{1}{b}}-1}{t^{\frac{1^*}{a}}}
=\lim_{t\downarrow 0}\frac{a}{b1^*}t^{1-\frac{1^*}{a}}(1+t)^{\frac{1}{b}-1}=0, 
\end{align*}
and then noting $g(t)>0$ for $t>0$, we have 
$\alpha_v=\frac{1}{E_{1^*}}\inf_{t>0}g(t)=\frac{1}{E_{1^*}}\lim_{t\downarrow 0}g(t)=0$. 
Next, we see $\lim_{t\downarrow 0}h(t)=1$ and 
\begin{align*}
\lim_{t\to\infty}h(t)
=\lim_{t\to\infty}\frac{(1+t)^{-\frac{1}{b}}}{1-(\frac{t}{1+t})^{\frac{1^*}{a}}}
=\lim_{t\to\infty}\frac{a}{b1^*}\frac{(1+t)^{-\frac{1}{b}+1}}{(\frac{t}{1+t})^{\frac{1^*}{a}-1}}
=\begin{cases}
&\infty\text{ \,when \,}b>1,\\
&\frac{a}{1^*}\text{ \,when \,}b=1,\\
&0\text{ \,when \,}b<1. 
\end{cases}
\end{align*}
We distinguish between three cases. 
First, let $b>1$. Then it holds $\alpha_c=\frac{1}{E_{1^*}}\sup_{t>0}h(t)=
\frac{1}{E_{1^*}}\lim_{t\to\infty}h(t)=\infty$, 
and then by Lemma \ref{vani-conce-ge} together with $\alpha_v=0$, 
$D_\alpha$ is attained for $\alpha>0$. 
Next, let $b=1$. In this case, it follows $\lim_{t\to\infty}h(t)=\frac{a}{1^*}$. 
By a direct computation, we obtain for $t>0$, 
\begin{align*}
h'(t)=\frac{t^{\frac{1^*}{a}}(1+t)^{\frac{1^*}{a}-2}}{\left((1+t)^{\frac{1^*}{a}}-t^{\frac{1^*}{a}}\right)^2}\tilde h(t)\text{ \,and \,}
\tilde h'(t)=\frac{1^*}{at^2}\left(\left(\frac{t}{1+t}\right)^{1-\frac{1^*}{a}}-1\right)<0, 
\end{align*}
where $\tilde h(t):=1+\frac{1^*}{at}-(\frac{1+t}{t})^{\frac{1^*}{a}}$. 
Since $\lim_{t\to\infty}\tilde h(t)=0$, we observe $\tilde h(t)>0$ for $t>0$, 
which implies $h'(t)>0$ for $t>0$. Summing-up, we have $\lim_{t\downarrow 0}h(t)=1$, 
$\lim_{t\to\infty}h(t)=\frac{a}{1^*}>1$ and $h'(t)>0$ for $t>0$, which show 
$\alpha_c=\frac{1}{E_{1^*}}\sup_{t>0}h(t)=\frac{1}{E_{1^*}}\lim_{t\to\infty}h(t)=\frac{a}{1^*E_{1^*}}$ and $1<h(t)<\frac{a}{1^*}$ for $t>0$. 
Thus by Lemma \ref{vani-conce-ge}, $D_\alpha$ is attained for $0<\alpha<\alpha_c(=\frac{a}{1^*E_{1^*}})$, while $D_\alpha$ is not attained for $\alpha>\alpha_c$. 
Furthermore, the relation $h(t)<\frac{a}{1^*}=\alpha_c E_{1^*}$ for $t>0$ 
implies $f_{\alpha_c}(t)<\alpha_c E_{1^*}=\lim_{s\to\infty}f_{\alpha_c}(s)\leq\sup_{s>0}f_{\alpha_c}(s)$ for $t>0$. Hence, $\sup_{t>0}f_{\alpha_c}(t)$ is not attained, 
which is equivalent to the non-attainability of $D_{\alpha_c}$ by Theorem \ref{thm5}. Next, let $b<1$. By a direct computation, we have for $t>0$, 
\begin{align*}
h'(t)=\frac{t^{\frac{1^*}{a}}(1+t)^{\frac{1^*}{a}-\frac{1}{b}-1}}{b\left((1+t)^{\frac{1^*}{a}}-t^{\frac{1^*}{a}}\right)^2}\tilde h(t)\text{ \,and \,}\tilde h'(t)=\frac{1^*}{at^2}\left(
\left(\frac{t}{1+t}\right)^{1-\frac{1^*}{a}}-b
\right), 
\end{align*}
where $\tilde h(t):=1+\frac{b1^*}{at}-(\frac{1+t}{t})^{\frac{1^*}{a}}$. Then we obtain 
\begin{align*}
\tilde h'(t)\begin{cases}
&<0\text{ \,for \,}0<t<t_0,\\
&=0\text{ \,for \,}t=t_0,\\
&>0\text{ \,for \,}t>t_0, 
\end{cases}
\end{align*} 
where $t_0:=\frac{b^{\frac{a}{a-1^*}}}{1-b^{\frac{a}{a-1^*}}}>0$. 
Since $\lim_{t\to\infty}\tilde h(t)=0$ and 
$\tilde h(t)=\frac{1}{t}\left(\frac{b1^*}{a}+t-t^{1-\frac{1^*}{a}}(1+t)^{\frac{1^*}{a}}\right)\to\infty$ as $t\downarrow 0$, there exists $t_1\in(0,t_0)$ such that 
\begin{align*}
\tilde h(t)
\begin{cases}
&>0\text{ \,for \,}0<t<t_1,\\
&=0\text{ \,for \,}t=t_1,\\
&<0\text{ \,for \,}t>t_1,
\end{cases}
\end{align*}
which implies 
\begin{align*}
h'(t)
\begin{cases}
&>0\text{ \,for \,}0<t<t_1,\\
&=0\text{ \,for \,}t=t_1,\\
&<0\text{ \,for \,}t>t_1. 
\end{cases}
\end{align*}
This fact together with $\lim_{t\downarrow 0}h(t)=1$ and $\lim_{t\to\infty}h(t)=0$ shows 
$\alpha_c=\frac{1}{E_{1^*}}\sup_{t>0}h(t)=\frac{1}{E_{1^*}}h(t_1)$, 
and then it follows $\frac{1}{E_{1^*}}<\alpha_c<\infty$. 
Thus by Lemma \ref{vani-conce-ge}, $D_\alpha$ is attained for $0<\alpha<\alpha_c$, while $D_\alpha$ is not attained for $\alpha>\alpha_c$. 
Furthermore, note that $\alpha_c=\frac{1}{E_{1^*}}h(t_1)$ 
is equivalent to $f_{\alpha_c}(t_1)=\alpha_c E_{1^*}$. 
This fact together with $\lim_{t\downarrow 0}f_{\alpha_c}(t)=1$ 
and $\lim_{t\to\infty}f_{\alpha_c}(t)=\alpha_c E_{1^*}=h(t_1)>1$, 
we can conclude that $\sup_{t>0}f_{\alpha_c}(t)$ is attained, 
and hence, $D_{\alpha_c}$ is attained by Theorem \ref{thm5}. 
It remains to show the asymptotic behaviors of $\alpha_c$ on $a$ and $b$. 
First, we prove $\lim_{b\downarrow 0}\alpha_c=\frac{1}{E_{1^*}}$. 
Since $0<t_1<t_0\to0$ as $b\downarrow 0$, we have $t_1\to 0$ as $b\downarrow 0$, 
and then we see $\lim_{b\downarrow 0}h(t_1)=1$, which implies 
$\lim_{b\downarrow 0}\alpha_c=\frac{1}{E_{1^*}}\lim_{b\downarrow 0}h(t_1)=\frac{1}{E_{1^*}}$. 
Next, we prove $\lim_{b\uparrow 1}\alpha_c=\frac{a}{1^*E_{1^*}}$. 
We write $t_1=t_1(b)$ for $0<b<1$. First, we claim $\lim_{b\uparrow 1}t_1(b)=\infty$. 
On the contrary, assume $\underline\lim_{b\uparrow 1}t_1(b)<\infty$. 
Then we can pick up a sequence $\{b_j\}_{j=1}^\infty\subset(0,1)$ satisfying 
$b_j\uparrow 1$ as $j\to\infty$ and $\lim_{j\to\infty}t_1(b_j)=\overline t_1\in[0,\infty)$. 
Recall that $t_1(b_j)$ satisfies $\tilde h(t_1(b_j))=0$, which implies 
\begin{align}\label{b_j-eq}
\frac{b_j1^*}{a}+t_1(b_j)-t_1(b_j)^{1-\frac{1^*}{a}}\left(1+t_1(b_j)\right)^{\frac{1^*}{a}}=0 
\end{align}
for each $j$. Letting $j\to\infty$ in \eqref{b_j-eq}, we obtain 
\begin{align*}
\frac{1^*}{a}+\overline t_1-\overline t_1^{1-\frac{1^*}{a}}\left(1+\overline t_1\right)^{\frac{1^*}{a}}=0,
\end{align*}
which shows that $\overline t_1>0$ is a solution of $\tilde h(t)=0$ for $t>0$ with $b=1$. 
On the other hand, in the same way as above, we see that $\tilde h(t)$ 
for $t>0$ with $b=1$ satisfies $\lim_{t\downarrow 0}\tilde h(t)=\infty$, 
$\lim_{t\to\infty}\tilde h(t)=0$ and $\tilde h'(t)<0$ for $t>0$, 
and hence, it holds $\tilde h(t)>0$ for $t>0$, 
which is a contradiction to $\tilde h(\overline t_1)=0$. 
As a result, we obtain $\underline\lim_{b\uparrow 1}t_1(b)=\infty$, 
which is equivalent to $\lim_{b\uparrow 1}t_1(b)=\infty$. 
Now we compute $\lim_{b\uparrow 1}h(t_1)$. 
Since $t_1$ satisfies $\tilde h(t_1)=0$, we have 
$(1+t_1)^{\frac{1^*}{a}}=t_1^{-\frac{a-1^*}{a}}\left(\frac{b1^*}{a}+t_1\right)$. 
Plugging this relation to $h(t_1)$, we obtain 
\begin{align*}
h(t_1)
=\frac{a}{b1^*}\left(\frac{t_1}{t_1+\frac{b1^*}{a}}\right)^{\frac{a-1^*}{1^*}}
\frac{
t_1^{\frac{(a-1^*)(1-b)}{b1^*}}
}{
\left(t_1+\frac{b1^*}{a}\right)^{\frac{a(1-b)}{b1^*}}
}. 
\end{align*} 
Since $t_1(b)\to\infty$ as $b\uparrow 1$, in order to prove $\lim_{b\uparrow 1}h(t_1)=\frac{a}{1^*}$, it is enough to show that $\lim_{b\uparrow 1}t_1^{1-b}=1$. 
Recalling $0<t_1<t_0$, we have $(1-b)\log t_1\leq(1-b)\log t_0\to 0$ as $b\uparrow 1$, 
which shows $\lim_{b\uparrow 1}(1-b)\log t_1(b)=0$, 
and hence, it holds $\lim_{b\uparrow 1}t_1^{1-b}=1$. 
As a conclusion, we obtain $\lim_{b\uparrow 1}h(t_1)=\frac{a}{1^*}$, 
which gives $\lim_{b\uparrow 1}\alpha_c=\frac{a}{1^*E_{1^*}}$. 
Next, we prove $\lim_{a\to\infty}\alpha_c=\infty$ when $b\leq 1$. 
We may assume $b<1$ since we have already proved $\alpha_c=\frac{a}{1^*E_{1^*}}$ when $a>1^*$ and $b=1$. 
Noting $t_0\to\frac{b}{1-b}$ as $a\to\infty$, we see $h(t_0)\to\infty$ as $a\to\infty$. 
Then since $t_1(<t_0)$ is the maximum point of $h(t)$ for $t>0$, we see $h(t_1)>h(t_0)\to\infty$ as $a\to\infty$, 
which shows $\lim_{a\to\infty}h(t_1)=\infty$, and hence, it holds $\lim_{a\to\infty}\alpha_c=\frac{1}{E_{1^*}}\lim_{a\to\infty}h(t_1)=\infty$. 
Next we show $\lim_{a\downarrow 1^*}\alpha_c=\frac{1}{E_{1^*}}$ when $b\leq 1$. 
In the same reason as above, we may assume $b<1$. 
Since $b<1$, we see $0<t_1<t_0\to 0$ as $a\downarrow 1^*$, and hence, it holds $\lim_{a\downarrow 1^*}t_1=0$. 
Then we have $\lim_{a\downarrow 1^*}h(t_1)=1$, which is equivalent to $\lim_{a\downarrow 1^*}\alpha_c=\frac{1}{E_{1^*}}$. 
The proof of Lemma \ref{a>1^*-ests} is complete. 
\end{proof}

\begin{lem}
Let $a=1^*$ and $b>0$. 

\medskip

\noindent
{\rm(i)} Let $b>1$. Then there hold $\alpha_v=\frac{1}{bE_{1^*}}$ and $\alpha_c=\infty$, 
and $D_\alpha$ is attained for $\alpha>\alpha_v$, 
while $D_\alpha$ is not attained for $0<\alpha\leq\alpha_v$. 

\medskip

\noindent
{\rm(ii)} Let $b=1$. Then there holds $\alpha_v=\alpha_c=\frac{1}{E_{1^*}}$, 
and $D_\alpha$ is not attained for $\alpha\ne \alpha_v$, 
while $D_{\alpha_v}$ is attained. 

\medskip

\noindent
{\rm(iii)} Let $b<1$. Then there holds $\alpha_v=\alpha_c=\frac{1}{E_{1^*}}$, 
and $D_\alpha$ is not attained for $\alpha>0$. 
\end{lem}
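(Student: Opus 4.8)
The plan is to exploit the fact that the hypothesis $a=1^*$ forces $\frac{1^*}{a}=1$, which collapses the three auxiliary functions of Lemma \ref{alpha^*-h} into elementary expressions. Substituting $\frac{1^*}{a}=1$ gives $f_\alpha(t)=\frac{(1+t)^{1-\frac1b}+\alpha E_{1^*}t}{1+t}$, $g(t)=\frac{(1+t)-(1+t)^{1-\frac1b}}{t}$, and---crucially---$h(t)=(1+t)^{1-\frac1b}$, since the denominator $(1+t)^{1}-t^{1}$ equals $1$. Thus the entire trichotomy in $b$ is governed by the sign of the exponent $1-\frac1b$, and the proof reduces to positivity and monotonicity statements for powers of $1+t$.

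First I would compute $\alpha_c=\frac{1}{E_{1^*}}\sup_{t>0}h(t)$ from the explicit form $h(t)=(1+t)^{1-\frac1b}$: when $b>1$ the exponent is positive so $h$ is increasing and $\sup_{t>0}h=\infty$, giving $\alpha_c=\infty$; when $b=1$ the exponent vanishes so $h\equiv1$ and $\alpha_c=\frac{1}{E_{1^*}}$; when $b<1$ the exponent is negative so $h$ decreases from its limit $1$ at $t=0$, again giving $\sup_{t>0}h=1$ and $\alpha_c=\frac{1}{E_{1^*}}$. Next I would compute $\alpha_v=\frac{1}{E_{1^*}}\inf_{t>0}g(t)$. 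For $b=1$ one has $g\equiv1$, hence $\alpha_v=\frac{1}{E_{1^*}}$. For $b<1$ the inequality $g(t)>1$ is equivalent to $(1+t)^{1-\frac1b}<1$, which holds for every $t>0$ because the exponent is negative; combined with $\lim_{t\to\infty}g(t)=1$ this yields $\inf_{t>0}g=1$ and $\alpha_v=\frac{1}{E_{1^*}}$. For $b>1$ I would show $g(t)>\frac1b$ for $t>0$ by introducing $\psi(t):=b(1+t)-b(1+t)^{1-\frac1b}-t$, noting $\psi(0)=0$ and $\psi'(t)=(b-1)\bigl(1-(1+t)^{-\frac1b}\bigr)>0$; together with $\lim_{t\downarrow0}g(t)=\frac1b$ this gives $\inf_{t>0}g=\frac1b$ and $\alpha_v=\frac{1}{bE_{1^*}}$.

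For the attainability statements the off-threshold cases follow immediately from Lemma \ref{vani-conce-ge}: parts (ii) and (i) rule out $\alpha<\alpha_v$ and $\alpha>\alpha_c$ respectively, while part (iii) produces a maximizer whenever $\alpha_v<\alpha<\alpha_c$ (relevant only in case (i), where $\alpha_c=\infty$). The threshold behavior I would settle by inspecting $\sup_{t>0}f_\alpha(t)$ directly and using Theorem \ref{thm5} to translate its attainability into that of $D_\alpha$. In case (i), at $\alpha=\alpha_v=\frac{1}{bE_{1^*}}$, the bound $g(t)>\frac1b$ is equivalent to $f_{\alpha_v}(t)<1=\lim_{t\downarrow0}f_{\alpha_v}(t)$, so the supremum is not attained. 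In case (iii), at $\alpha=\alpha_v=\frac{1}{E_{1^*}}$ one has $f_{\alpha_v}(t)=\frac{(1+t)^{1-\frac1b}+t}{1+t}<1$ for all $t>0$ (again because $(1+t)^{1-\frac1b}<1$), while $f_{\alpha_v}\to1$ as $t\downarrow0$ and as $t\to\infty$, so the supremum is approached but never attained; with the off-threshold cases this gives non-attainability for all $\alpha>0$.

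The one genuinely different, and most delicate, point is the threshold of case (ii), $b=1$, where I expect the main subtlety: here $f_{\alpha_v}\equiv1$ identically, so $\sup_{t>0}f_{\alpha_v}(t)=1$ is attained at every $t>0$, and I must invoke Theorem \ref{thm5} to conclude that each such $t$ corresponds to a genuine maximizer of $D_{\alpha_v}$ (a rescaled characteristic function of a ball). Thus $D_{\alpha_v}$ is attained precisely in this degenerate borderline case, in sharp contrast to $b\ne1$, and verifying that this constant-$f$ phenomenon really yields maximizers, rather than a merely approached supremum, is the step requiring the most care.
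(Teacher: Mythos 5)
Your proposal is correct and follows essentially the same route as the paper: specialize the functions $f_\alpha$, $g$, $h$ of Lemma \ref{alpha^*-h} to $\frac{1^*}{a}=1$, read off $\alpha_c$ from $h(t)=(1+t)^{1-\frac{1}{b}}$, obtain $\alpha_v$ from the bound $g>\frac{1}{b}$ (resp.\ $g>1$, $g\equiv 1$), settle the off-threshold cases by Lemma \ref{vani-conce-ge}, and transfer (non-)attainability of $\sup_{t>0}f_{\alpha_v}(t)$ to $D_{\alpha_v}$ via Theorem \ref{thm5}, including the degenerate $b=1$ case where $f_{\alpha_v}\equiv 1$. The only cosmetic difference is that you prove $g(t)>\frac{1}{b}$ directly via the auxiliary function $\psi$ rather than through the paper's sign analysis of $g'$ and $\tilde g$, which is an equally valid and slightly more economical verification of the same inequality.
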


\begin{proof}
(i) Let $a=1^*$ and $b>1$. Since $g(t)=\frac{(1+t)\left(1-(1+t)^{-\frac{1}{b}}\right)}{t}$ 
for $t>0$, we see $\lim_{t\downarrow 0}g(t)=\frac{1}{b}$ and $\lim_{t\to\infty}g(t)=1$. 
We can compute for $t>0$, $g'(t)=t^{-2}(1+t)^{-\frac{1}{b}}\tilde g(t)$, 
where $\tilde g(t):=1+\frac{t}{b}-(1+t)^{\frac{1}{b}}$, 
and we obtain for $t>0$, 
$\tilde g'(t)=\frac{1}{b}-\frac{1}{b}(1+t)^{\frac{1}{b}-1}>0$ since $b>1$. 
Then noting $\lim_{t\downarrow 0}\tilde g(t)=0$, we have $\tilde g(t)>0$ for $t>0$, 
which implies $g'(t)>0$ for $t>0$. 
Here, recalling $\lim_{t\downarrow 0}g(t)=\frac{1}{b}<1$ and $\lim_{t\to\infty}g(t)=1$, 
we obtain $\alpha_v=\frac{1}{E_{1^*}}\inf_{t>0}g(t)=\frac{1}{bE_{1^*}}$. 
On the other hand, since $h(t)=(1+t)^{1-\frac{1}{b}}$ for $t>0$, we obtain 
$\alpha_c=\frac{1}{E_{1^*}}\sup_{t>0}h(t)=\infty$ since $b>1$. 
Thus by Lemma \ref{vani-conce-ge}, $D_\alpha$ is attained for $\alpha>\alpha_v$, while $D_\alpha$ is not attained for $0<\alpha<\alpha_v$. 
Next, we consider the case $\alpha=\alpha_v$. 
Note $\alpha_v E_{1^*}=\frac{1}{b}<g(t)$ for $t>0$, which implies 
$f_{\alpha_v}(t)<1=\lim_{s\downarrow 0}f_{\alpha_v}(s)\leq\sup_{s>0}f_{\alpha_v}(s)$ 
for $t>0$. Hence, $\sup_{t>0}f_{\alpha_v}(t)$ is not attained, 
which is equivalent to the non-attainability of $D_{\alpha_v}$ by Theorem \ref{thm5}. 

\medskip

\noindent
(ii) Let $a=1^*$ and $b=1$. 
In this case, since $g(t)=1$ for $t>0$, it follows 
$\alpha_v=\frac{1}{E_{1^*}}\inf_{t>0}g(t)=\frac{1}{E_{1^*}}$. 
On the other hand, since $h(t)=1$ for $t>0$, it follows 
$\alpha_c=\frac{1}{E_{1^*}}\sup_{t>0}h(t)=\frac{1}{E_{1^*}}$. 
Thus there holds $\alpha_v=\alpha_c=\frac{1}{E_{1^*}}$, 
and by Lemma \ref{vani-conce-ge}, $D_\alpha$ is not attained for $\alpha\ne\alpha_v(=\alpha_c)$. Next, we consider the case $\alpha=\alpha_v$. 
In this case, we see $f_{\alpha_v}(t)=1$ for $t>0$, 
and hence, $\sup_{t>0}f_{\alpha_v}(t)$ is attained, which is equivalent 
to the attainability of $D_{\alpha_v}$ by Theorem \ref{thm5}. 

\medskip

\noindent
(iii) Let $a=1^*$ and $b<1$. First, recall $\lim_{t\downarrow 0}g(t)=\frac{1}{b}$ 
and $\lim_{t\to\infty}g(t)=1$. In the same way as in the case (i), we see 
$g'(t)=t^{-2}(1+t)^{-\frac{1}{b}}\tilde g(t)$ 
with $\tilde g(t):=1+\frac{t}{b}-(1+t)^{\frac{1}{b}}$ for $t>0$. 
Then we obtain $\tilde g'(t)=\frac{1}{b}-\frac{1}{b}(1+t)^{\frac{1}{b}-1}<0$ 
for $t>0$ since $b<1$. Thus noting $\lim_{t\downarrow 0}\tilde g(t)=0$, we have 
$\tilde g(t)<0$ for $t>0$, which implies $g'(t)<0$ for $t>0$. 
Since $\lim_{t\downarrow 0}g(t)=\frac{1}{b}>1$ and $\lim_{t\to\infty}g(t)=1$, 
it follows $\alpha_v=\frac{1}{E_{1^*}}\inf_{t>0}g(t)=\frac{1}{E_{1^*}}$. 
On the other hand, since $h(t)=(1+t)^{1-\frac{1}{b}}$ for $t>0$, 
it follows $\alpha_c=\frac{1}{E_{1^*}}\sup_{t>0}h(t)=\frac{1}{E_{1^*}}$. 
Hence, we obtain $\alpha_v=\alpha_c=\frac{1}{E_{1^*}}$, 
and then by Lemma \ref{vani-conce-ge}, $D_\alpha$ is not attained for $\alpha\ne\alpha_v(=\alpha_c)$. Next, we consider the case $\alpha=\alpha_v$. 
Note $\alpha_v E_{1^*}=1<g(t)$ for $t>0$, which implies $f_{\alpha_v}(t)<1=\lim_{s\downarrow 0}f_{\alpha_v}(s)\leq\sup_{s>0}f_{\alpha_v}(s)$ for $t>0$. Hence, $\sup_{t>0}f_{\alpha_v}(t)$ 
is not attained, which is equivalent to the non-attainability of $D_{\alpha_v}$ by Theorem \ref{thm5}.
\end{proof}

\begin{lem}\label{last-lem-cri}
Let $a<1^*$ and $b>0$. 

\medskip

\noindent
{\rm(i)} Let $b>1$. Then there hold $0<\alpha_v<\frac{1}{E_{1^*}}$ and $\alpha_c=\infty$, 
and $D_\alpha$ is attained for $\alpha\geq\alpha_v$, 
while $D_\alpha$ is not attained for $0<\alpha<\alpha_v$. 
Moreover, there hold $\lim_{b\downarrow 1}\alpha_v=\frac{1}{E_{1^*}}$, $\lim_{b\to\infty}\alpha_v=0$, 
$\lim_{a\downarrow 0}\alpha_v=\frac{1}{E_{1^*}}$ and $\lim_{a\uparrow 1^*}\alpha_v=\frac{1}{b E_{1^*}}$. 

\medskip

\noindent
{\rm(ii)} Let $b\leq1$. Then there holds $\alpha_v=\alpha_c=\frac{1}{E_{1^*}}$, 
and $D_\alpha$ is not attained for $\alpha>0$. 
\end{lem}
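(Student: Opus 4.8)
The plan is to read every assertion off the reduction in Lemma \ref{alpha^*-h}, which gives $D_\alpha=\sup_{t>0}f_\alpha(t)$, $\alpha_v=\frac{1}{E_{1^*}}\inf_{t>0}g(t)$ and $\alpha_c=\frac{1}{E_{1^*}}\sup_{t>0}h(t)$. Writing $\beta:=\frac{1^*}{a}>1$ (this is exactly where the hypothesis $a<1^*$ enters) and $\gamma:=\frac{1}{b}$, the change of variable $s=\frac{t}{1+t}\in(0,1)$ turns the three functions into the clean forms
\[
g(s)=s^{-\beta}\bigl(1-(1-s)^{\gamma}\bigr),\qquad h(s)=\frac{(1-s)^{\gamma}}{1-s^{\beta}},\qquad f_{\alpha}(s)=(1-s)^{\gamma}+\alpha E_{1^*}\,s^{\beta},
\]
and I record the elementary equivalence $f_\alpha(s)\le 1\iff \alpha E_{1^*}\le g(s)$, which links the level $1$ of $f_\alpha$ to $g$. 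All attainment statements will then be phrased through Theorem \ref{thm5} (attainment of $D_\alpha$ is equivalent to attainment of $\sup_{t>0}f_\alpha$) and Lemma \ref{vani-conce-ge}.

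For part (ii) ($b\le 1$, i.e. $\gamma\ge 1$) everything reduces to the single inequality $(1-s)^{\gamma}+s^{\beta}<1$ on $(0,1)$, itself immediate from $(1-s)^{\gamma}\le 1-s$ and $s^{\beta}<s$. This gives $g>1$ on $(0,1)$ with $g\to 1$ as $s\to 1$, hence $\inf g=1$ and $\alpha_v=\frac{1}{E_{1^*}}$; likewise $(1-s)^{\gamma}\le 1-s<1-s^{\beta}$ yields $h<1$ with $h\to 1$ as $s\to 0$, so $\sup h=1$ and $\alpha_c=\frac{1}{E_{1^*}}$. Non-attainment for $\alpha\ne\frac{1}{E_{1^*}}$ is then Lemma \ref{vani-conce-ge} (i)--(ii), and at the threshold $\alpha=\frac{1}{E_{1^*}}$ the same inequality gives $f_\alpha(s)=(1-s)^{\gamma}+s^{\beta}<1=\lim_{s\to 0}f_\alpha=\lim_{s\to 1}f_\alpha$, so the supremum $1$ is approached only at the endpoints and is not attained at any interior $s$; by Theorem \ref{thm5}, $D_\alpha$ is unattained for every $\alpha>0$.

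For part (i) ($b>1$, i.e. $\gamma<1$) I first note $h\to\infty$ as $s\to 1$ (the numerator $(1-s)^{\gamma}$ beats $1-s^{\beta}\sim\beta(1-s)$), so $\alpha_c=\infty$. To handle $\alpha_v$ I analyze $g$: a direct differentiation shows $g'$ has the same sign as an auxiliary function $G$ with $G(0)=0$ and $G'(s)=\gamma(1-s)^{\gamma-2}\bigl[(1-\beta)+s(\beta-\gamma)\bigr]$; since the bracket is linear and, for $\gamma<1<\beta$, changes sign once at $s^{*}=\frac{\beta-1}{\beta-\gamma}\in(0,1)$, tracking signs from $G(0)=0$ and $G(s)\to+\infty$ as $s\to 1$ produces a unique interior minimizer $s_1\in(s^{*},1)$, with $0<\inf g=g(s_1)<\lim_{s\to 1}g=1$, hence $0<\alpha_v<\frac{1}{E_{1^*}}$. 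Attainment for $\alpha>\alpha_v$ is Lemma \ref{vani-conce-ge} (iii) (here $\alpha_c=\infty$), non-attainment for $\alpha<\alpha_v$ is (ii), and at $\alpha=\alpha_v$ the equivalence above gives $f_{\alpha_v}\le 1$ with equality exactly at $s_1$, so the supremum $1$ is attained at the interior point $s_1$ and $D_{\alpha_v}$ is attained.

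It remains to establish the four limits, which is where the genuine work lies. The robust inputs are the two-sided bound $\frac{1}{bE_{1^*}}\le\alpha_v\le\frac{1}{E_{1^*}}$: the lower bound comes from Bernoulli's inequality $(1-s)^{\gamma}\le 1-\gamma s$, giving $g\ge\gamma s^{1-\beta}\ge\gamma$, and the upper bound from $\inf g\le\lim_{s\to 1}g=1$. From these, $\lim_{b\downarrow 1}\alpha_v=\frac{1}{E_{1^*}}$ follows by squeezing, and $\lim_{b\to\infty}\alpha_v=0$ follows from $\alpha_v\le g(s_0)/E_{1^*}\to 0$ at any fixed $s_0$ as $\gamma\to 0$. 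For $\lim_{a\uparrow 1^*}\alpha_v=\frac{1}{bE_{1^*}}$ I pair the lower bound $\ge\frac{1}{bE_{1^*}}$ with the upper estimate $\inf g\le g(\eta)\to\frac{1-(1-\eta)^{\gamma}}{\eta}$ (letting $\beta\downarrow 1$, then $\eta\downarrow 0$). I expect the hardest point to be $\lim_{a\downarrow 0}\alpha_v=\frac{1}{E_{1^*}}$ (i.e. $\beta\to\infty$), where the crude bound $g\ge\gamma$ is too weak because the minimizer $s_1$ runs to $1$; here I will argue by a two-region estimate---for $s$ near $1$ use $g\ge 1-(1-s)^{\gamma}\to 1$ (since $s^{-\beta}\ge 1$), and for $s$ bounded away from $1$ use $g\ge\gamma s^{1-\beta}\to\infty$---to force $\inf g\to 1$. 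This last limit, together with the sign-tracking that pins down the unique minimizer $s_1$, is the main technical obstacle.
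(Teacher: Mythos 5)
Your proposal is correct, and while it shares the paper's overall skeleton (the reduction of Lemma \ref{alpha^*-h}, the attainment criteria from Lemma \ref{vani-conce-ge} and Theorem \ref{thm5}), the technical execution is genuinely different and in places substantially simpler. The substitution $s=\frac{t}{1+t}$ is the main structural novelty: it turns $g$, $h$, $f_\alpha$ into $s^{-\beta}(1-(1-s)^\gamma)$, $\frac{(1-s)^\gamma}{1-s^\beta}$, $(1-s)^\gamma+\alpha E_{1^*}s^\beta$, after which your part (ii) collapses to the one-line inequality $(1-s)^\gamma+s^\beta<1$ (from $(1-s)^\gamma\le 1-s$ and $s^\beta<s$), whereas the paper proves the same facts by monotonicity analysis of $g$ and $h$ through the auxiliary functions $\tilde g,\tilde h$ and their derivatives. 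Your sign analysis of $G$ in part (i) is a faithful (and correct; I checked $G'(s)=\gamma(1-s)^{\gamma-2}\left[(1-\beta)+s(\beta-\gamma)\right]$) analogue of the paper's analysis of $\tilde g$, so that portion is essentially parallel. The real divergence is in the four asymptotic limits: the paper tracks the minimizer $t_1$ via the equation $\tilde g(t_1)=0$, substitutes the resulting identity $1+t_1=\bigl(1+\frac{at_1}{b1^*}\bigr)^b$ into $g(t_1)$, and carries out delicate expansions (including showing $t_1^{1-b}\to1$ or $t_1^{\frac{1^*}{a}-1}\to1$), while you bypass the minimizer entirely using the global two-sided bound $\frac{1}{bE_{1^*}}\le\alpha_v\le\frac{1}{E_{1^*}}$ (Bernoulli plus the endpoint limit of $g$), which yields $\lim_{b\downarrow1}\alpha_v$ by squeezing, $\lim_{a\uparrow1^*}\alpha_v$ by combining the lower bound with $\inf g\le g(\eta)$ and a double limit, $\lim_{b\to\infty}\alpha_v$ by evaluation at a fixed point, and $\lim_{a\downarrow0}\alpha_v$ by the two-region estimate. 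Your route is shorter, avoids all minimizer-tracking, and is more robust; what it gives up is only the explicit location information about the minimum point that the paper's computations incidentally provide, which the statement does not require.
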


\begin{proof}
(i) Let $a<1^*$ and $b>1$. First, we see $\lim_{t\downarrow 0}g(t)=\infty$ and $\lim_{t\to\infty}g(t)=1$. By a direct computation, we have for $t>0$, 
$g'(t)=t^{-\frac{1^*}{a}-1}(1+t)^{\frac{1^*}{a}-\frac{1}{b}-1}\tilde g(t)$, 
where $\tilde g(t):=\frac{t}{b}+\frac{1^*}{a}\left(1-(1+t)^{\frac{1}{b}}\right)$, 
and $\tilde g'(t)=\frac{1}{b}-\frac{1^*}{ab}(1+t)^{\frac{1}{b}-1}$. 
Then we observe 
\begin{align*}
\tilde g'(t)\begin{cases}
&<0\text{ \,for \,}0<t<t_0,\\
&=0\text{ \,for \,}t=t_0,\\
&>0\text{ \,for \,}t>t_0, 
\end{cases}
\end{align*}
where $t_0:=(\frac{1^*}{a})^{\frac{b}{b-1}}-1>0$. 
Note 
\begin{align*}
\lim_{t\downarrow 0}\tilde g(t)=0\text{ \,and \,}
\lim_{t\to\infty}\tilde g(t)=\lim_{t\to\infty}t\left(\frac{1}{b}+\frac{1^*}{at}-\frac{1^*(1+t)^{\frac{1}{b}}}{at}\right)=\infty
\end{align*}
since $b>1$. Hence, there exists $t_1>t_0$ such that 
\begin{align*}
\tilde g(t)\begin{cases}
&<0\text{ \,for \,}0<t<t_1,\\
&=0\text{ \,for \,}t=t_1,\\
&>0\text{ \,for \,}t>t_1,
\end{cases}
\end{align*}
which implies 
\begin{align*}
g'(t)\begin{cases}
&<0\text{ \,for \,}0<t<t_1,\\
&=0\text{ \,for \,}t=t_1,\\
&>0\text{ \,for \,}t>t_1. 
\end{cases}
\end{align*}
Then recalling $\lim_{t\downarrow 0}g(t)=\infty$ and $\lim_{t\to\infty}g(t)=1$, 
we have $\alpha_v=\frac{1}{E_{1^*}}\inf_{t>0}g(t)=\frac{1}{E_{1^*}}g(t_1)>0$, 
which gives $0<\alpha_v<\frac{1}{E_{1^*}}$. 
On the other hand, we see 
\begin{align*}
\lim_{t\downarrow 0}h(t)=1\text{ \,and \,}
\lim_{t\to\infty}h(t)=\lim_{t\to\infty}\frac{(1+t)^{-\frac{1}{b}}}{1-(\frac{t}{1+t})^{\frac{1^*}{a}}}
=\frac{a}{b1^*}\lim_{t\to\infty}(1+t)^{-\frac{1}{b}+1}=\infty
\end{align*}
since $b>1$. Hence, we obtain $\alpha_c=\frac{1}{E_{1^*}}\sup_{t>0}h(t)=\infty$. 
As a result, $D_\alpha$ is attained for $\alpha>\alpha_v$, 
while $D_\alpha$ is not attained for $0<\alpha<\alpha_v$. 
Next, we consider the case $\alpha=\alpha_v$. 
Note $\alpha_v E_{1^*}=g(t_1)$ implies $f_{\alpha_v}(t_1)=1$. 
Combining this fact with $\lim_{t\downarrow 0}f_{\alpha_v}(t)=1$ 
and $\lim_{t\to\infty}f_{\alpha_v}(t)=\alpha_v E_{1^*}=g(t_1)<1$, 
we can conclude that $\sup_{t>0}f_{\alpha_v}(t)$ is attained, which is equivalent 
to the attainability of $D_{\alpha_v}$ by Theorem \ref{thm5}. 
Next, we prove the asymptotic behaviors of $\alpha_v$ on $a$ and $b$. 
First, we show $\lim_{b\to\infty}\alpha_v=0$. 
By a direct computation, we have for $b>1$, 
\begin{align*}
g(t_0)
=\left(
\frac{1}{1-(\frac{a}{1^*})^{\frac{b}{b-1}}}
\right)^{\frac{1^*}{a}}\left(1-\left(\frac{a}{1^*}\right)^{\frac{1}{b-1}}\right)\to 0
\end{align*}
as $b\to\infty$. Since $t_1$ is the minimum point of $g(t)$ for $t>0$, 
we have $0<g(t_1)<g(t_0)\to 0$ as $b\to\infty$, 
and thus it holds $\lim_{b\to\infty}g(t_1)=0$, 
which shows $\alpha_v=\frac{1}{E_{1^*}}g(t_1)\to 0$ as $b\to\infty$. 
Next, we show $\lim_{b\downarrow 1}\alpha_v=\frac{1}{E_{1^*}}$. 
First, since $g(t_1)<1$ for $b>1$, we obtain $\overline\lim_{b\downarrow 1}g(t_1)\leq1$. 
On the other hand, recall that $t_1$ satisfies $\tilde g(t_1)=0$, which implies 
$1+t_1=\left(1+\frac{at_1}{b1^*}\right)^b$. Plugging this relation to $g(t_1)$, we see for $b>1$, 
\begin{align}
&\notag g(t_1)=\left(\frac{a}{b1^*}+\frac{1}{t_1}\right)^{\frac{1^*}{a}}
\left(\frac{a}{b1^*}\right)^{\frac{(b-1)1^*}{a}}\left(\frac{1}{1+\frac{b1^*}{at_1}}\right)
\left(t_1+\frac{b1^*}{a}\right)^{\frac{(b-1)1^*}{a}}\\
&\label{bto1-beha2}\geq\left(\left(\frac{a}{1^*}\right)^{\frac{1^*}{a}}+o(1)\right)
\left(t_0+\frac{b1^*}{a}\right)^{\frac{(b-1)1^*}{a}} 
\end{align}
as $b\downarrow 1$, where we used $t_1>t_0\to\infty$ as $b\downarrow 1$, 
which gives $\lim_{b\downarrow 1}t_1=\infty$. Furthermore, we observe for $b>1$, 
\begin{align*}
&\left(t_0+\frac{b1^*}{a}\right)^{\frac{(b-1)1^*}{a}}
=\left(\frac{1^*}{a}\right)^{\frac{b1^*}{a}}
\left(1+\frac{\frac{b1^*}{a}-1}{(\frac{1^*}{a})^{\frac{b}{b-1}}}\right)^{\frac{(b-1)1^*}{a}}
=\left(\frac{1^*}{a}\right)^{\frac{b1^*}{a}}\left(1+o(1)\right)^{\frac{(b-1)1^*}{a}}
\end{align*}
as $b\downarrow 1$, and hence, it holds 
\begin{align}\label{bto1-beha1}
\lim_{b\downarrow 1}\left(t_0+\frac{b1^*}{a}\right)^{\frac{(b-1)1^*}{a}}=\left(\frac{1^*}{a}\right)^{\frac{1^*}{a}}
\end{align}
Combining \eqref{bto1-beha2} with \eqref{bto1-beha1}, we obtain 
$\underline\lim_{b\downarrow 1}g(t_1)\geq\left(\frac{a}{1^*}\right)^{\frac{1^*}{a}}\left(\frac{1^*}{a}\right)^{\frac{1^*}{a}}=1$. 
As a conclusion, we have $\lim_{b\downarrow 1}g(t_1)=1$, 
which yields $\lim_{b\downarrow 1}\alpha_v=\frac{1}{E_{1^*}}\lim_{b\downarrow 1}g(t_1)=\frac{1}{E_{1^*}}$. 
Next, we show $\lim_{a\downarrow 0}\alpha_v=\frac{1}{E_{1^*}}$. 
Since $g(t_1)<1$, we have $\overline\lim_{a\downarrow 0}g(t_1)\leq 1$. 
On the other hand, noting $t_1>t_0\to\infty$ as $a\downarrow 0$, we see 
\begin{align*}
g(t_1)=\left(1+\frac{1}{t_1}\right)^{\frac{1^*}{a}}\left(1-(1+t_1)^{-\frac{1}{b}}\right)\geq
1-(1+t_1)^{-\frac{1}{b}}\to 1
\end{align*}
as $a\downarrow 0$, and thus it holds $\underline\lim_{a\downarrow 0}g(t_1)\geq1$. 
As a conclusion, we obtain $\lim_{a\downarrow 0}g(t_1)=1$, which implies $\lim_{a\downarrow 0}\alpha_v=\frac{1}{E_{1^*}}$. 
Next, we show $\lim_{a\uparrow 1^*}\alpha_v=\frac{1}{bE_{1^*}}$. 
We write $t_1=t_1(a)$ for $a<1^*$. 
First, we claim $\lim_{a\uparrow 1^*}t_1(a)=0$. 
To this end, assume that $\overline\lim_{a\uparrow 1^*}t_1(a)=\overline t_1\in(0,\infty]$. 
Then we can pick up a sequence $\{a_j\}_{j=1}^\infty\in(0,1^*)$ such that $a_j\uparrow 1^*$ as $j\to\infty$ 
and $\lim_{j\to\infty}t_1(a_j)=\overline t_1$. Recall that $t_1(a_j)$ satisfies $\tilde g(t_1(a_j))=0$, which implies 
\begin{align}\label{a_j-rela}
\frac{1}{b}+\frac{1^*}{a_jt_1(a_j)}-\frac{1^*(1+t_1(a_j))^{\frac{1}{b}}}{a_jt_1(a_j)}=0
\end{align}
for each $j$. First, assume that $\overline t_1=\infty$. Then letting $j\to\infty$ in \eqref{a_j-rela}, we obtain $\frac{1}{b}=0$, which is a contradiction. 
Hence, it holds $\overline t_1\in(0,\infty)$. Now letting $j\to\infty$ in \eqref{a_j-rela} again, we have 
\begin{align*}
\frac{1}{b}+\frac{1}{\overline t_1}-\frac{(1+\overline t_1)^{\frac{1}{b}}}{\overline t_1}=0,
\end{align*} 
which yields $1+\overline t_1-(1+\frac{\overline t_1}{b})^b=0$. However, this is a contradiction since we can check $1+t-(1+\frac{t}{b})^b<0$ for $t>0$. 
As a result, we have $\overline\lim_{a\uparrow 1^*}t_1=0$, which is equivalent to $\lim_{a\uparrow 1^*}t_1=0$. 
Now we compute $g(t_1)$. By a direct computation, we see 
\begin{align*}
&g(t_1)=\frac{(1+t_1)^{\frac{1^*}{a}-\frac{1}{b}}}{t_1^{\frac{1^*}{a}-1}}\frac{(1+t_1)^{\frac{1}{b}}-1}{t_1}
=\frac{\frac{1}{b}+o(1)}{t_1^{\frac{1^*}{a}-1}}
\end{align*}
as $a\uparrow 1^*$, where we used $\lim_{a\uparrow 1^*}t_1=0$. Hence, in order to prove $\lim_{a\uparrow 1^*}g(t_1)=\frac{1}{b}$, 
it is enough to show $\lim_{a\uparrow 1^*}t_1^{\frac{1^*}{a}-1}=1$. 
Since $0<t_0<t_1\to 0$ as $a\uparrow 1^*$, we see for $a<1^*$ close enough to $1^*$, 
\begin{align*}
\left|\left(\frac{1^*}{a}-1\right)\log t_1\right|=\left(\frac{1^*}{a}-1\right)\log\frac{1}{t_1}
<\left(\frac{1^*}{a}-1\right)\log\frac{1}{t_0}\to 0
\end{align*}
as $a\uparrow 1^*$, which gives $\lim_{a\uparrow 1^*}\left(\frac{1^*}{a}-1\right)\log t_1=0$, and hence, it holds 
$\lim_{a\uparrow 1^*}t_1^{\frac{1^*}{a}-1}=1$. As a conclusion, we obtain $\lim_{a\uparrow 1^*}g(t_1)=\frac{1}{b}$, 
which shows $\lim_{a\uparrow 1^*}\alpha_v=\frac{1}{E_{1^*}}\lim_{a\uparrow 1^*}g(t_1)=\frac{1}{bE_{1^*}}$. 

\medskip

\noindent
(ii) Let $a<1^*$ and $b\leq1$. 
In the same way as in the case (i), we have for $t>0$, 
\begin{align*}
g'(t)=t^{-\frac{1^*}{a}-1}(1+t)^{\frac{1^*}{a}-\frac{1}{b}-1}\tilde g(t)
\text{ \,and \,}\tilde g'(t)=\frac{1}{b}-\frac{1^*}{ab}(1+t)^{\frac{1}{b}-1}, 
\end{align*}
where $\tilde g(t):=\frac{t}{b}+\frac{1^*}{a}\left(1-(1+t)^{\frac{1}{b}}\right)$. 
Then we see $b\tilde g'(t)=1-\frac{1^*}{a}(1+t)^{\frac{1}{b}-1}\leq 1-\frac{1^*}{a}<0$ 
for $t>0$ since $a<1^*$ and $b\leq 1$, and thus it holds $\tilde g'(t)<0$ for $t>0$. 
Then since $\lim_{t\downarrow 0}\tilde g(t)=0$, we obtain $\tilde g(t)<0$ for $t>0$, 
which implies $g'(t)<0$ for $t>0$. 
This fact together with $\lim_{t\to\infty}g(t)=1$, we have 
$\alpha_v=\frac{1}{E_{1^*}}\inf_{t>0}g(t)=\frac{1}{E_{1^*}}$. 
On the other hand, we see $\lim_{t\downarrow 0}h(t)=1$ and 
\begin{align*}
\lim_{t\to\infty}h(t)=\lim_{t\to\infty}\frac{
(1+t)^{-\frac{1}{b}}
}{
1-(\frac{t}{1+t})^{\frac{1^*}{a}}
}=\frac{a}{b1^*}\lim_{t\to\infty}(1+t)^{-\frac{1}{b}+1}=
\begin{cases}
&\frac{a}{1^*}\text{ \,when \,}b=1,\\
&0\text{ \,when \,}b<1. 
\end{cases}
\end{align*}
In the same way as in the case (i), we see for $t>0$, 
\begin{align*}
h'(t)=\frac{
t^{\frac{1^*}{a}}(1+t)^{\frac{1^*}{a}-\frac{1}{b}-1}
}{
b\left((1+t)^{\frac{1^*}{a}}-t^{\frac{1^*}{a}}\right)^2
}\tilde h(t)\text{ \,and \,}
\tilde h'(t)=\frac{1^*}{at^2}\left(\left(\frac{1+t}{t}\right)^{\frac{1^*}{a}-1}-b\right)>\frac{1^*}{at^2}(1-b)\geq0
\end{align*}
since $a<1^*$ and $b\leq1$, where $\tilde h(t):=\frac{b1^*}{at}+1-(\frac{1+t}{t})^{\frac{1^*}{a}}$. 
Hence, it follows $\tilde h'(t)>0$ for $t>0$. Since $\lim_{t\to\infty}\tilde h(t)=0$, 
we obtain $\tilde h(t)<0$ for $t>0$, which shows $h'(t)<0$ for $t>0$. 
This fact together with $\lim_{t\downarrow 0}h(t)=1$ and 
$\lim_{t\to\infty}h(t)=\begin{cases}
&\frac{a}{1^*}<1\text{ \,when \,}b=1,\\
&0\text{ \,when \,}b<1
\end{cases}
$ gives $\alpha_c=\frac{1}{E_{1^*}}\sup_{t>0}h(t)=\frac{1}{E_{1^*}}$. 
As a result, we have $\alpha_v=\alpha_c=\frac{1}{E_{1^*}}$, 
and then $D_\alpha$ is not attained for $\alpha\ne\alpha_v(=\alpha_c)$. 
Next, we consider the case $\alpha=\alpha_v$. 
Note that $(1=)\alpha_v E_{1^*}<g(t)$ for $t>0$ implies 
$f_{\alpha_v}(t)<1=\lim_{s\downarrow 0}f_{\alpha_v}(s)\leq\sup_{s>0}f_{\alpha_v}(s)$ 
for $t>0$. Hence, $\sup_{t>0}f_{\alpha_v}(t)$ is not attained, 
which is equivalent to the non-attainability of $D_{\alpha_v}$ by Theorem \ref{thm5}. 
The proof of Lemma \ref{last-lem-cri} is complete. 
\end{proof}

\begin{proof}[{\rm \bf Proof of Theorems \ref{thm3}-\ref{thm4}}]
Gathering up Lemmas \ref{a>1^*-ests}-\ref{last-lem-cri}  we have the results stated in Theorems \ref{thm3}-\ref{thm4}. 
\end{proof}


\begin{thebibliography}{99}
\bibitem{A}T. Aubin, 
{\sl  Probl\`emes isop\'erim\'etriques et espaces de Sobolev}, 
J. Differ. Geom. {\bf 11}, 573--598 (1976). 

\bibitem{FP}G. Figueiredo, M. Pimenta, 
{\sl Strauss' and Lions' type results in $BV$ with an application to $1$-Laplacian problem}, 
Milan J. Math. {\bf 86}, 15--30 (2018).  

\bibitem{G}E. Giusti, 
{\sl Minimal Surfaces and Functions of Bounded Variation}, 
Springer Science\,$+$\,Business Media, New York, 1984. 

\bibitem{IIW}N. Ikoma, M. Ishiwata, H. Wadade, 
{\sl Existence and non-existence of maximizers for the Moser-Trudinger type inequalities under inhomogeneous constraints}, 
Math. Ann. (in press).

\bibitem{I}M. Ishiwata, 
{\sl Existence and nonexistence of maximizers for variational problems associated with Trudinger-Moser inequalities in $\Bbb R^N$}, 
Math. Ann. {\bf 351}, 781--804 (2011). 

\bibitem{IW}M. Ishiwata, H. Wadade, 
{\sl On the effect of equivalent constraints on a maximizing problem associated with the Sobolev type embeddings in $\Bbb R^N$}, 
Math. Ann. {\bf 364}, 1043--1068 (2016).  

\bibitem{IW2}M. Ishiwata, and H. Wadade, 
{\sl On the maximizing problem associated with Sobolev type embeddings under inhomogeneous constraints}, 
Applicable Analysis (in press). 

\bibitem{L}N. Lam, 
{\sl Maximizers for the singular Trudinger-Moser inequalities in the subcritical cases}, 
Proc. Amer. Math. Soc. {\bf 145}, 4885--4892 (2017).

\bibitem{L2}N. Lam, 
{\sl Optimizers for the singular Trudinger-Moser inequalities in the critical case $\Bbb R^2$},  Math. Nachr. {\bf 291}, 2272--2287 (2018). 

\bibitem{LLZ}N. Lam, G. Lu, L. Zhang, 
{\sl Equivalence of critical and subcritical sharp Trudinger-Moser-Adams inequalities}, 
Rev. Mat. Iberoam. {\bf 33}, 1219--1246 (2017). 

\bibitem{Li}Y. Li, 
{\sl Moser-Trudinger inequality on compact Riemannian manifolds of dimension two}, 
J. Partial Differ. Equa. {\bf 14}, 163--192 (2001).

\bibitem{LR}Y. Li, B. Ruf, 
{\sl A sharp Trudinger-Moser type inequality for unbounded domains in $\Bbb R^n$}, 
Indiana Univ. Math. J. {\bf 57}, 451--480 (2008). 

\bibitem{LY}X. Li, Y. Yang, 
{\sl Extremal functions for singular Trudinger-Moser inequalities in the entire Euclidean space}, J. Differential Equations {\bf 264}, 4901--4943 (2018).

\bibitem{M}V. Mazya, 
{\sl On $p$-conductivity and theorems on embedding certain functional spaces into a $C$-space}, 
Sov. Math. Dokl. {\bf 2}, 413--415 (1961). 

\bibitem{N}V. Nguyen, 
{\sl Extremal functions for the Moser-Trudinger inequality of Adimurthi-Druet type in $W^{1,N}(\Bbb R^N)$}, 
Commun. Contemp. Math. (in press).

\bibitem{N2}V. Nguyen, 
{\sl Maximizers for the variational problems associated with Sobolev type inequalities under constraints}, Math. Ann. (in press).

\bibitem{OST}J. do \'O, F. Sani, C. Tarsi, 
{\sl Vanishing-concentration-compactness alternative for the Trudinger-Moser inequality in $\Bbb R^N$}, 
Commun. Contemp. Math. {\bf 19}, 27pp (2016).

\bibitem{R}B. Ruf, 
{\sl A sharp Trudinger-Moser type inequality for unbounded domains in $\Bbb R^2$}, 
J. Funct. Anal. {\bf 219}, 340--367 (2005).

\bibitem{T}G. Talenti, 
{\sl Best constants in Sobolev inequality}, 
Ann. Mat. Pura Appl. {\bf 110}, 353--372 (1976). 
\end{thebibliography}
\end{document}